\newcommand{\bp}{\begin{prop}}
\newcommand{\ep}{\end{prop}}
\newcommand{\bd}{\begin{definicion}}
\newcommand{\ed}{\end{definicion}}
\newcommand{\bl}{\begin{lema}}
\newcommand{\el}{\end{lema}}
\newcommand{\bh}{\begin{hecho}}
\newcommand{\eh}{\end{hecho}}
\newcommand{\bpreg}{\begin{preg}}
\newcommand{\epreg}{\end{preg}}
\newcommand{\bo}{\begin{obs}}
\newcommand{\eo}{\end{obs}}
\newcommand{\bcon}{\begin{conj}}
\newcommand{\econ}{\end{conj}}
\newcommand{\brmk}{\begin{rmk}}
\newcommand{\ermk}{\end{rmk}}
\newcommand{\bc}{\begin{corol}}
\newcommand{\ec}{\end{corol}}
\newcommand{\bconst}{\begin{const}}
\newcommand{\econst}{\end{const}}
\newcommand{\ota}{\otimes_a}
\newcommand{\opa}{\oplus_a}
\newcommand{\bitem}{\begin{itemize}}
\newcommand{\eitem}{\end{itemize}}
\newcommand{\bt}{\begin{teor}}
\newcommand{\et}{\end{teor}}
\newcommand{\be}{\begin{ejem}}
\newcommand{\ee}{\end{ejem}}
\newcommand{\bnot}{\begin{nota}}
\newcommand{\enot}{\end{nota}}
\newtheorem*{conj}{Conjecture}
\newtheorem{claim}{Claim}
\newtheorem{theorem}{Theorem}[section]
\newtheorem{corollary}[theorem]{Corollary}
\newtheorem{definition}{Definition}[section]
\newtheorem{lemma}[theorem]{Lemma}
\newtheorem{proposition}[theorem]{Proposition}
\newtheorem{fact}[theorem]{Fact}
\newtheorem{remark}[theorem]{Remark}
\numberwithin{equation}{section}
\newcommand{\ov}[1]{\overline{#1}}
\newcommand{\M}{\mathcal{M}}
\newcommand{\tp}{\operatorname{tp}}
\title{Definable groups in models of Presburger arithmetic}
\author{Alf Onshuus}
\address{Departamento de Matem\'aticas, Universidad de los Andes, Cra 1 No 18A-10, Bogot\'a 111711, Colombia}
\email{aonshuus@uniandes.edu.co}
\author{ Mariana Vicar\'ia}
\email{mariana@math.berkeley.edu}
\address{Department of Mathematics, Evans Hall, University of California at Berkeley,  Berkeley, CA, U.S.A.}
\begin{document}

%%%%%%%%%%%%%%%%%%%%%%%%%%%%%%%%%%%%%%%%%%%%%%%%%%%%%%%%%%%%%%%%%%%%%%%%%
%%%  Topmatter
%%%%%%%%%%%%%%%%%%%%%%%%%%%%%%%%%%%%%%%%%%%%%%%%%%%%%%%%%%%%%%%%%%%%%%%%%

\maketitle

%\subtitle{Do you have a subtitle?\\ If so, write it here}

%\titlerunning{Short form of title}        % if too long for running head

\thanks{This research was partially subsidized by Colciencias grant number 120471250707.}     %\and
 %etc.

%\authorrunning{Short form of author list} % if too long for running head

%\institute{Alf Onshuus \at
 %             Departamento de Matem\'aticas, Universidad de los Andes, Cra 1 No. 18A-10, Edificio H, Bogot\'a, 111711, Colombia.\\
  %%          \email{aonshuus@uniandes.edu.co}           %  \\
%             \emph{Present address:} of F. Author  %  if needed
    %       \and
     %      Mariana Vicar\'ia \at
%Department of Mathematics, Evans Hall, University of California at Berkeley,  Berkeley, CA, U.S.A.\\
%\email{mariana@math.berkeley.edu}
%}

%\date{Received: date / Accepted: date}
% The correct dates will be entered by the editor

%\maketitle

\begin{abstract}
This paper is devoted to understand groups definable in Presburger arithmetic. We prove the following theorems:

\noindent Theorem $1$. Every group definable in a model of Presburger arithmetic is abelian- by-finite.

\noindent Theorem $2$ Every bounded abelian group definable in a model $(\mathbb{Z},+,<)$ of Presburger arithmetic is definably isomorphic to $(\mathbb{Z}, +)^{n}$ mod out by a lattice.
\end{abstract}

%\begin{keyword}Presburger arithmetic \sep Definable groups \sep amenable groups

% \PACS{PACS code1 \and PACS code2 \and more}
%\MSC[] 03C64 \sep 03C65
%\end{keyword}

\section{Introduction} \label{intro}
This paper is devoted to understanding groups definable in Presburger
arithmetic. It is in the same spirit as
\cite{Pantelis} where analogous studies were made for
the theory of $(\mathbb Q, +, <)$.

In \cite{GroupsMeasuresNIP} the proof of what was known as ``Pillay's Conjecture'' was finalized. This
conjecture stated that given any definably compact group $G$ definable in an o-minimal expansion of a real closed field,
 one can find a Lie group $G_L$ as the quotient of $G$ by its largest type definable subgroup of bounded index $G^{00}$.
Moreover, not only does $G_L$ have the same dimension (as a Lie group) as the o-minimal dimension of $G$, but the pure group
theory of $G_L$ and $G$ are the same, meaning that phenomena such as abelianity, definable solubility, etc. of $G$ are already captured by
$G_L$. This result was then proved in \cite{Pantelis} for groups definable in $(\mathbb Q, +, <)$.

One would like to extend these results (understanding definable
groups in terms of more standard geometric objects) to groups
definable in other geometric contexts such as the theory of the
$p$-adics $Th(\mathbb Q_p, +, \cdot)$, or Presburger artihmetic
(that is, the theory $Th(\mathbb Z, +, <)$). Notice that
Presburger arithmetic is a reduct of $Th(\mathbb Q_p, +, \cdot)$
(it is the value group) so understanding groups definable in
Presburger would be a first step in the understanding of groups
definable in $p$-adically closed fields. In this paper we find
analogous results to those proved in \cite{Pantelis} for
Presburger arithmetic.

We will prove the following theorems, which are Theorems
\ref{ab-by-finite} and \ref{THEOREM}.

\begin{theorem} Every infinite group definable in a model of Presburger arithmetic is
abelian-by-finite.
\end{theorem}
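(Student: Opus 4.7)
The plan relies on the fact that, in Presburger arithmetic, definable sets admit a cell decomposition and that definable functions are piecewise affine with rational coefficients, subject to appropriate congruence conditions modulo integers. Let $G$ be an infinite group definable in a sufficiently saturated $\M \models Th(\mathbb Z, +, <)$, realized as a definable subset of some $M^n$ together with a definable multiplication $m \colon G \times G \to G$.

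First I would partition $G \times G$ into finitely many cells and refine the partition so that, for each pair $(i,j)$ of cells in $G$, the restriction $m|_{C_i \times C_j}$ is given by a single formula of the form $m(x,y) = A_{ij}\, x + B_{ij}\, y + c_{ij}$, on a piece cut out by linear inequalities and congruences. Since $G$ is infinite, at least one cell $C \subseteq G$ has positive Presburger dimension, and by cell decomposition finitely many translates of $C$ cover $G$.

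Next I would exploit the group axioms to constrain the coefficients on this large cell. For triples $(x,y,z)$ lying in a suitable sub-cell of $C^3$ where $xy$, $yz$, $(xy)z$ and $x(yz)$ all stay inside a cell on which $m$ is given by a single affine formula, equating $m(m(x,y),z)$ with $m(x,m(y,z))$ yields the matrix identities $A^2 = A$, $B^2 = B$ and $AB = BA$. Using the local form of the identity axiom forces $A = B = I$, so that on this piece the multiplication reduces to $(x,y) \mapsto x + y + c$, which is commutative. Hence generic pairs in $G$ commute.

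Finally, I would bootstrap this local abelianity to a definable abelian subgroup of finite index. The centralizer $Z_G(c)$ of a generic $c \in C$ is a definable subgroup of $G$ containing a generic set, hence should have finite index by a standard compactness argument in the Presburger setting. Intersecting finitely many such centralizers produces a definable, abelian, finite-index subgroup $H \leq G$, showing that $G$ is abelian-by-finite.

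The main obstacle is controlling the piecewise structure globally: the associativity computation is only valid inside a sub-cell where all relevant products land in a single cell, and transitions between cells introduce extra affine shifts governed by the congruence predicates $\equiv_n$ that can create spurious finite-order behavior. Ensuring that the locally abelian pieces glue coherently, and that the resulting centralizers truly have finite index rather than merely being ``large,'' is the delicate heart of the argument and will require careful bookkeeping of both linear and congruence data across the cell decomposition.
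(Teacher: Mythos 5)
Your proposal follows the same broad strategy as the paper (cell decomposition, piecewise $\mathbb{Q}$-affine local form of the multiplication, and a centralizer argument to extract a finite-index abelian subgroup), but two of the steps have genuine gaps.

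First, the derivation of $A = B = I$ from associativity plus ``the local form of the identity axiom'' is not justified. Associativity inside a single cell does give $A^2 = A$, $B^2 = B$, $AB = BA$, but to conclude $A = B = I$ from idempotence you need $A$ and $B$ to be invertible, and you would like to plug in the identity $e$. Both fail in general: $e$ need not lie in the dim-generic cell you are working in, and when $\dim(G) = d < n$ the $n\times n$ matrices $A,B$ cannot be invertible (the cell is an $\bar{\imath}$-cell of dimension $d$, not an open subset of $M^n$). The paper sidesteps both issues by not reasoning about invertibility at all: it fixes a dim-generic $a$ in the cell, establishes (via independent dim-generics $a_1$, $a_2 = aa_1^{-1}$, the carefully chosen $C$-boxes of Lemmas \ref{celdassuma}--\ref{celdassumaadentro}, and the explicit formula of Lemma \ref{vecindadcool}) that $x a^{-1} y$ has the affine form $P_e(M_c x+\gamma_1)+Q_f(N_d y+\gamma_2)+\beta$ on a $C$-box around $a$, and then simply substitutes $y=a$ and $x=a$ and adds the two resulting identities. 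Because $a$ itself is in the box, this yields $x + y = (xa^{-1}y) + a$, hence $xa^{-1}y = x - a + y$, with no invertibility, no identity element, and no case analysis on $\dim(G)$.

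Second, the passage from a locally commuting neighbourhood to an abelian finite-index subgroup is not quite right as stated. Intersecting the centralizers $Z_G(c)$ over generic $c$ produces $C(C)$, which does have finite index (it contains the generic cell), but there is no reason for $C(C)$ to be abelian. The paper instead applies the standard double-centralizer trick: since $\otimes_a$ is commutative on the box $B_a$, the set $B_a$ consists of pairwise commuting elements, so $H := C(C(B_a))$ is automatically abelian; and $H \supseteq B_a$, so $H$ has full dimension and hence finite index by Remark \ref{dimfiniteindex}. If you replace your ``intersect finitely many centralizers'' step with this double-centralizer argument, and replace the $A=B=I$ step with the substitution trick around a dim-generic $a$, your outline matches the paper's proof.
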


\begin{theorem} Every infinite and bounded group $G$ definable in a model $(Z,+,<)$ of Presburger arithmetic of dimension $n$ is
definably isomorphic to $(Z, +)^n/ \Lambda$ where $\Lambda$ is a local lattice of $(Z,+)^{n}$.

More precisely, there is an $n$-dimensional box $B\in Z$ and a
local $B$-lattice $\Lambda$ (see Definition \ref{locallattice})
such that $G$ is isomorphic to the ($\wedge$-definable) subgroup
of $Z^n$ generated by $B$ modulo $\Lambda$.
\end{theorem}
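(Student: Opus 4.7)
The overall plan mirrors the strategy used in \cite{Pantelis} for $(\mathbb{Q},+,<)$, while exploiting the specific piecewise $\mathbb{Z}$-affine structure of definable functions in Presburger arithmetic. First, by Theorem~1, I would pass to a finite-index abelian subgroup $H \leq G$ and establish the structure theorem for $H$; the result for $G$ should then follow since $H$ and $G$ share the same dimension and passing to a finite-index subgroup preserves boundedness (together with some work to glue the finite quotient $G/H$ back in). So assume henceforth that $G$ is abelian, bounded, and of dimension $n$.

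The second step is to apply cell decomposition in Presburger to produce a definable parametrization of $G$ by an $n$-dimensional box. Since $G$ lives in a bounded subset of some $Z^m$ and has Presburger dimension $n$, I would decompose $G$ into cells, select a top-dimensional open cell $C$, and after an affine change of coordinates and a translation obtain a definable bijection $\pi : B \setminus N \to G \setminus G'$, where $B \subseteq Z^n$ is a box and $N,G'$ are lower-dimensional. The identifications on $B$ needed to recover the complement $G'$ will ultimately become the nontrivial part of $\Lambda$.

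Next, I would pull back the group operation of $G$ through $\pi$, obtaining a definable partial binary operation $\ast : B \times B \dashrightarrow B$ which, by cell decomposition, is piecewise $\mathbb{Z}$-affine. The group axioms (associativity, commutativity, existence of an identity and inverses) together with the rigidity of $\mathbb{Z}$-affine maps should force the linear part on every piece to be $(x,y) \mapsto x + y$, so that on each piece $x \ast y = x + y - \lambda(x,y)$ with $\lambda(x,y)$ ranging over a finite set $F \subseteq Z^n$. One then defines $\phi : \langle B \rangle \to G$ on the $\wedge$-definable subgroup of $Z^n$ generated by $B$ by $\phi(x_1 + \cdots + x_k) = \pi(x_1) \ast \cdots \ast \pi(x_k)$, checks it is well-defined and a surjective homomorphism, and identifies its kernel $\Lambda$ as the required local $B$-lattice (in the sense of Definition \ref{locallattice}).

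The main obstacle I anticipate lies in this last step: showing that the ``carrying corrections'' $\lambda(x,y)$ generate a genuine lattice in $Z^n$ rather than a more complicated type-definable subgroup, and that this lattice meets the formal requirements of a local $B$-lattice. Boundedness of $G$ should force $F$ to be finite and globally comparable in size with $B$; combined with associativity, this should rigidify $\Lambda$ enough to match the definition. Verifying compatibility across the different affine pieces produced by cell decomposition, and handling the lower-dimensional ``boundary'' set $N$ correctly so that $\pi$ extends coherently, will be the most delicate bookkeeping part of the argument.
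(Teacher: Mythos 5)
Your overall architecture --- pass to a finite-index abelian subgroup, find a box $B$ whose iterated sums generate a $\bigvee$-definable subgroup of $Z^n$, map that group onto $G$ and identify the kernel as a local $B$-lattice --- does match the skeleton of the paper's proof of Theorem \ref{THEOREM}. However, there is a serious gap at the step where you invoke ``rigidity of $\mathbb{Z}$-affine maps'' to force the linear part of the pulled-back operation to be $(x,y)\mapsto x+y$. This does not follow from cell decomposition and the group axioms alone; establishing it is the central technical content of the paper. One must first prove a \emph{local} version (Lemma \ref{SUPERvecindad}): around a dim-generic point $a$ there is a box $B_a$ on which $x\cdot a^{-1}\cdot y = x-a+y$, and this already requires the box/cell machinery of Section \ref{sub1.4} and a careful computation with matrix representations. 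But a local box is not enough: one must then upgrade the box to a set that is \emph{generic} in $G$ (Theorem \ref{supersupervecindad}), and this requires the amenability and forking machinery of Section \ref{sec3} --- in particular that generic $\Leftrightarrow$ positive $\mu_G$-measure (Corollary \ref{genericPositive}), a product-measure argument to find a generic sub-rectangle of a generic cell of $G\times G$ (Lemma \ref{cajaGenerica}), the decomposition of bounded sets into parallelograms (Fact \ref{paralelogramo}), and a delicate propagation argument over ``octants'' of a parallelogram centered at a dim-generic point. Your proposal does not anticipate any of this, and without it the claim that the group operation coincides with vector addition on a large enough set is unjustified.

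Relatedly, your picture of a near-bijective parametrization $\pi: B\setminus N \to G\setminus G'$ on which $x\ast y = x+y-\lambda(x,y)$ with $\lambda$ ranging over a finite ``carrying set'' $F$ is not how the paper proceeds and would require justification you do not supply (why should $F$ be finite, and how do you reconcile the affine pieces?). The paper never parametrizes most of $G$ by a single box: it produces a \emph{generic} parallelogram on which the group operation coincides \emph{exactly} with $\oplus_a$ (no correction term), shifts so $a$ is the identity, linearly straightens the parallelogram to a box $B$, and then defines $f_n:nB\to G$ by $f_n(b_1+\cdots+b_n)=f(b_1)\oplus\cdots\oplus f(b_n)$. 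The nontrivial part becomes proving $f_n$ is well defined (the paper's Claim \ref{well defined}, which requires a careful computation splitting each $b_i$ into a divisible-by-$n$ part and a bounded integer part), showing $f_n(nB)$ stabilizes at a finite-index subgroup $G_0$ via genericity, and finally reading off $\Lambda=\bigcup_n\{b\in nB: f_n(b)=0_G\}$ as the kernel. The condition $\Lambda\cap B=\{0\}$ then holds because $f$ is injective on $B$; this is cleaner than trying to control a finite ``carry'' set across affine pieces.
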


We will always work in Presburger arithmetic, and set
\[T=Th(
Z,+,<,-,\equiv_{n},0,1)\] so that we have quantifier elimination.

This paper combines results from an unpublished preprint by the
first author and from the second author's Master's thesis at
Universidad de los Andes.

\section{Preliminaries}
\label{Preliminares}
We are interested in understanding the definable groups  in
Presburger arithmethic. For this we are
going to use some facts that are already known for this particular
theory, such as quantifier elimination and the Cell Decomposition
Theorem. For sake of completeness we present all the statements
that we are going to use throughout this paper, and their respective
references.

We will always use $\mathcal{M}$ to indicate a saturated model
of Presburger arithmetic, and $\mathcal{M}_{0}$ will be a small
elementary submodel of $\mathcal{M}$. Also, $(G,\cdot, e)$
 denotes a definable group over  $\mathcal{M}_{0}$
and usually $B$ denotes a small set of parameters from
$\mathcal{M}$. Unless otherwise specified, given a definable set
$X \subseteq \mathcal{M}^{n}$ and an element $x \in X$ we denote
by $x_i$ the $i$-th coordinate of $x$ so that
$x=(x_{1},\dots,x_{n})$.

\subsection{Quantifier Elimination and Cell Decomposition}\label{sub1.1}
We begin with some facts about Presburger arithmetic. The
following is Corollary $3.1.21$ in \cite{Marker}.

\begin{fact}
Let $\mathcal{L}_{Pres}=\{+,-,<,\{\equiv_{n}\}_{n \in \mathbb{N}}
,0,1\}$, where Presburger
arithmetic has quantifier elimination in $\mathcal{L}_{Pres}$. An explicit axiomatization can be found in \cite{Marker}.
\end{fact}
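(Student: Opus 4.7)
The plan is to prove quantifier elimination by the standard one-variable reduction: it suffices to show that every formula $\exists x\, \varphi(x,\bar{y})$, with $\varphi$ a conjunction of literals in $\mathcal{L}_{Pres}$, is equivalent modulo $T$ to a quantifier-free $\mathcal{L}_{Pres}$-formula. Negations of congruences can be expanded as finite disjunctions over the other residue classes, and negations of strict inequalities become non-strict; so every literal involving $x$ can be put into one of the forms $ax<s(\bar{y})$, $ax>s(\bar{y})$, $ax=s(\bar{y})$, or $ax\equiv_n s(\bar{y})$ with $a>0$.

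First I would normalize the coefficient of $x$. Let $d$ be the lcm of all coefficients of $x$ appearing in $\varphi$. Multiplying each inequality or equality through by the appropriate factor, I may assume every literal involving $x$ has coefficient $d$. Substituting $y:=dx$ then reduces the problem to eliminating $\exists y$ from a conjunction of literals $y<s_i(\bar{y})$, $y>t_j(\bar{y})$, $y=r_k(\bar{y})$, $y\equiv_{n_\ell} u_\ell(\bar{y})$, subject to the extra congruence $y\equiv_d 0$ coming from the substitution. If some equality $y=r_k$ is present, direct substitution finishes the elimination.

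Otherwise the inequalities describe an interval $(L,U):=(\max_j t_j,\ \min_i s_i)$, and the congruences, by the Chinese Remainder Theorem, are either jointly inconsistent (a quantifier-free condition on the $u_\ell$'s) or equivalent to a single congruence $y\equiv_N v(\bar{y})$ with $N:=\mathrm{lcm}(d,n_1,n_2,\dots)$. The existence of a suitable $y$ then becomes: $L<U$, and the arithmetic progression $v+N\mathbb{Z}$ meets the interval $(L,U)$. I would express the latter as a finite disjunction over the $N$ possible values of the residue of $L-v$ modulo $N$: each branch identifies the unique element of $v+N\mathbb{Z}$ lying in the interval $(L,L+N]$ as $L+c$ for some fixed constant $c\in\{1,\dots,N\}$, and asserts that $L+c<U$.

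The main obstacle is ensuring that this last case-split really lives inside $\mathcal{L}_{Pres}$: I must verify that for each residue class the offset $c$ from $L$ to the nearest member of $v+N\mathbb{Z}$ is a concrete integer constant rather than a bound variable, so that the resulting formula is a finite boolean combination of congruences and strict inequalities with no hidden existential. Because $N$ depends only on the syntactic shape of $\varphi$ and not on the parameters $\bar{y}$, the disjunction is genuinely finite and each disjunct is quantifier-free. Collecting the three possibilities (an equality is present, the congruences are inconsistent, or interval plus consistent congruences) yields a quantifier-free equivalent for $\exists x\,\varphi$ and establishes the fact.
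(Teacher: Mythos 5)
The paper does not prove this statement at all: it is stated as a known \emph{Fact}, with a citation to Marker (Corollary 3.1.21 there), and used as a black box for the rest of the development. So there is no ``paper proof'' against which to compare the details of your argument.

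That said, your sketch is essentially the standard (Presburger's original) quantifier-elimination argument, which is also the one developed in Marker, so there is no conflict of method. The skeleton is sound: reduce to eliminating one existential from a conjunction of literals; clear denominators by passing to $y=dx$ at the cost of the extra congruence $y\equiv_d 0$; handle an equality by substitution; combine congruences via the generalized Chinese Remainder Theorem into one congruence mod $N$ together with a quantifier-free consistency condition; and finally express solvability of ``$y\equiv_N v$ and $L<y<U$'' by a finite disjunction over the residue of $L-v$ modulo $N$, on each branch of which the offset $c$ to the least admissible $y>L$ is a \emph{fixed} integer so that the condition becomes $L+c<U$. Two small points you gloss over and should make explicit if this were written out in full: (i) $L=\max_j t_j(\bar y)$ and $U=\min_i s_i(\bar y)$ are not $\mathcal L_{Pres}$-terms, so you need a further finite case-split over which lower bound is largest and which upper bound is smallest, each branch being governed by a quantifier-free conjunction of inequalities among the $t_j$'s and $s_i$'s; and (ii) the degenerate cases where the set of upper bounds or of lower bounds is empty must be treated separately (there the congruence consistency condition alone suffices). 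Neither of these is a genuine gap; they are routine bookkeeping in the standard proof.
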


\begin{corollary} \label{terminos} Let $x$ be a single variable
and $a=(a_{1},\dots,a_{k})$ be a tuple of elements in
$\mathcal{M}$. Then any $\mathcal{L}_{Pres}$- term $\tau(x,a)$ is of the form:
\begin{align*}
sx+ \sum_{i=1}^{n} k_{i} a_{i}+l, \ \text{where $s ,k_{i}, l$ are  integers}.
\end{align*}
\end{corollary}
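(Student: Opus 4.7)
The plan is to prove this by a straightforward induction on the complexity of the $\mathcal{L}_{Pres}$-term $\tau(x,a)$. Note that the only function symbols in $\mathcal{L}_{Pres}$ are $+$ and $-$ (binary and possibly unary), and the only constant symbols are $0$ and $1$; the symbols $<$ and $\equiv_n$ are relation symbols, so they do not appear inside terms. Hence every term is built from the variables $x, a_1, \ldots, a_k$ and the constants $0, 1$ using $+$ and $-$ only.

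For the base case, I would check the four atomic cases: the term $x$ is $1\cdot x + \sum 0\cdot a_i + 0$; the term $a_j$ is $0\cdot x + 1\cdot a_j + \sum_{i\ne j} 0\cdot a_i + 0$; the constant $0$ is $0\cdot x + \sum 0 \cdot a_i + 0$; and the constant $1$ is $0\cdot x + \sum 0\cdot a_i + 1$. Each is evidently of the desired form with integer coefficients $s, k_i$ and integer constant $l$.

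For the inductive step, suppose $\tau_1(x,a) = s^{(1)} x + \sum_i k^{(1)}_i a_i + l^{(1)}$ and $\tau_2(x,a) = s^{(2)} x + \sum_i k^{(2)}_i a_i + l^{(2)}$ both have the desired form. Then
\[
\tau_1 \pm \tau_2 \;=\; (s^{(1)}\pm s^{(2)})\,x \;+\; \sum_{i=1}^{k}(k^{(1)}_i \pm k^{(2)}_i)\,a_i \;+\; (l^{(1)} \pm l^{(2)}),
\]
which is again of the required form since $\mathbb{Z}$ is closed under $+$ and $-$. This closes the induction and establishes the corollary.

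There is no serious obstacle here: the result is just a syntactic normal form for terms in an abelian-group language with two constants, and the proof is a routine structural induction. The only mild subtlety is to remember at the outset that the Presburger language, as set up in the previous fact, contains neither multiplication nor the congruence relations as function symbols, so nothing other than integer linear combinations can appear; once that is observed, the induction writes itself.
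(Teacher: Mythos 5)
Your proof is correct and is precisely the structural induction on terms that the paper itself invokes (the paper simply states ``It follows by induction on the length of $\tau(x,a)$'' without spelling out the cases). Your write-up fills in the routine base and inductive steps; nothing in substance differs.
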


\begin{proof}
It follows by induction of the length of $\tau(x,a)$.
\end{proof}

\medskip

The following is a definition, which corresponds to ``linear and $B$-definable''
in \cite{cell}.

\begin{definition} Let $f: X \subseteq \mathcal{M}^{m} \rightarrow \mathcal{M}$
be a function. We say that $f$ is \emph{$B$-linear}, if it can be
written in the form
\begin{align*}
\displaystyle{f(x)= \sum_{i=1}^{m} s_{i}\left( \frac{x_{i}-c_{i}}{k_{i}}\right)} +\gamma,
\end{align*}
where $\gamma \in dcl(B)$, and for each $1\leq i \leq m$, $0\leq
c_{i} < k_{i}$ and $s_i$ are integers, such that
$x_{i}\equiv_{k_{i}} c_{i}$ and $x_{i}$ is the $i$-th coordinate of $x$. \end{definition}

\begin{proposition} \label{atomic} Let $\sigma(x,\overline{a})$ be a consistent
 atomic $\mathcal{L}_{Pres}$-formula, with parameters. Then
$\sigma(x,\overline{a})$  is equivalent to a formula of one of the
following forms: \begin{enumerate} \item $x=b$, where $b=t(\ov{a})$ and $t$
is an $\emptyset$-linear function. \item $x\leq b$, where
$b=t(\ov{a})$ and $t$ is an $\emptyset$-linear function. \item $x
\geq b$, where $b=t(\ov{a})$ and $t$ is an $\emptyset$-linear
function. \item $x \equiv_{N} c$, where $0\leq c < N$ are
integers. \end{enumerate} \end{proposition}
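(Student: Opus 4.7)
The plan is to classify all atomic $\mathcal{L}_{Pres}$-formulas in the single variable $x$ and normalize each by hand. The atomic formulas are of the three shapes $\tau_1 = \tau_2$, $\tau_1 < \tau_2$, and $\tau_1 \equiv_N \tau_2$. Applying Corollary \ref{terminos} to each $\tau_j$ and collecting the $x$-coefficients on the left, every such formula becomes equivalent to one of
\begin{align*}
sx = u(\bar{a}), \qquad sx < u(\bar{a}), \qquad sx \equiv_N u(\bar{a}),
\end{align*}
where $s \in \mathbb{Z}$ and $u(\bar{a}) = \sum_i m_i a_i + c$ has integer coefficients. If $s = 0$, the condition is independent of $x$ and, by consistency, is a tautology, so it is equivalent to $x \equiv_1 0$, an instance of form (4). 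From here on I assume $s \neq 0$, and (by multiplying both sides by $-1$ if needed) $s > 0$; in the inequality case this sign flip is precisely what distinguishes forms (2) and (3).

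The core of the argument is an integer-division bookkeeping identity. Let $c_i' := a_i \bmod s \in \{0, \dots, s-1\}$. Each $c_i'$ is a specific integer, hence in $dcl(\emptyset)$, and
\begin{align*}
\frac{u(\bar{a}) - \rho}{s} \;=\; \sum_i m_i \cdot \frac{a_i - c_i'}{s} \;+\; \frac{c + \sum_i m_i c_i' - \rho}{s}
\end{align*}
is $\emptyset$-linear as a function of $\bar{a}$ whenever the right-hand constant is an integer. For $sx = u(\bar{a})$, consistency forces $s \mid u(\bar{a})$, so $\rho = 0$ works and $x = u(\bar{a})/s$ has form (1). For $sx < u(\bar{a})$, the largest solution is $x = (u(\bar{a}) - 1 - r)/s$, where $r \in \{0, \dots, s-1\}$ is the unique integer with $c + \sum_i m_i c_i' - 1 - r \equiv 0 \pmod{s}$; this has form (2), and the mirror argument for $s < 0$ gives form (3). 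For $sx \equiv_N u(\bar{a})$, set $d := \gcd(s, N)$; consistency forces $d \mid u(\bar{a})$, and the formula is then equivalent to $x \equiv_{N/d} c^\ast$, where $c^\ast \in \{0, \dots, N/d - 1\}$ is the unique residue with $(s/d) c^\ast \equiv u(\bar{a})/d \pmod{N/d}$, giving form (4).

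The main (mild) obstacle is to check that the bound $b$ in each of forms (1)--(3) is genuinely $\emptyset$-linear as a function of $\bar{a}$ rather than only $dcl(\bar{a})$-linear. This is handled by the identity above together with the observation that the residues $c_i'$, although they depend on which $a_i$ is plugged in, are each just a concrete integer in $\{0, \dots, s-1\}$, so they qualify as the constants $c_i$ in the definition of $\emptyset$-linear function with the $k_i$ taken to be $s$; the leftover $\gamma$ is an integer by the consistency-driven divisibility in each case.
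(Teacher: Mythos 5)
Your argument is correct. The paper itself gives no proof here --- it declares the proposition ``left as an exercise to the reader'' --- so there is no in-text argument to compare against, but what you have written is a complete and valid solution to that exercise. The normalization to $sx \mathrel{\square} u(\bar a)$ via Corollary~\ref{terminos} is the obvious first step, and the genuinely nontrivial observation you correctly isolate is the $\emptyset$-linearity of the resulting bound: since each residue $c_i' = a_i \bmod s$ lies in $\{0,\dots,s-1\}$ and is a \emph{standard} integer, the expression $\sum_i m_i\frac{a_i-c_i'}{s}+\gamma$ matches the template in the definition of $B$-linear with $B=\emptyset$, provided $\gamma$ is an integer; and in each of the three cases (equality, strict inequality, congruence) the consistency hypothesis is exactly what forces the relevant divisibility that makes $\gamma$ integral (for $\rho=0$, $\rho=1+r$, and $d=\gcd(s,N)\mid u(\bar a)$, respectively). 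One point worth stating explicitly if you write this out in full: the $\emptyset$-linear function $t$ you produce depends on the tuple $\bar a$ through the residues $c_i'$, not merely on the syntactic shape of $\sigma$, but this is perfectly consonant with the statement, which only asks for \emph{some} $\emptyset$-linear $t$ with $b=t(\bar a)$. Likewise, in the congruence case your $c^\ast$ is a standard integer because $u(\bar a)/d$ has a well-defined residue in $\{0,\dots,N/d-1\}$ by the Presburger axioms, even though $u(\bar a)$ itself may be nonstandard; it is good that you pinned this down, as it is an easy place to be sloppy.
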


\begin{proof} It is left as an exercise to the reader.
 \end{proof}

We will now state the Cell Decomposition Theorem due to R.
Cluckers (see \cite{cell}).

\begin{definition} [$B$-definable cells] Let $B$ a set of parameters. We will define  \emph{$B$-definable
cells} inductively as follows:

\begin{enumerate} \item A \emph{$B$-definable $0$-cell} is a point $p \in
dcl(B)$. A \emph{$B$-definable $1$-cell} is an infinite set of
the form $\{ \alpha \square_{1} x \ \square_{2} \beta \ | \ x
\equiv_{N} c \}$, where $\alpha, \beta \in dcl(B)$, $0 \leq c < N$
are positive integers and $\square_{j}$ stands for either $\leq$
or no condition.

\item Assume that we have defined an $(i_{1},\dots, i_{n})$
$B$-definable cell $C$, where $i_{j} \in \{0, 1\}$ for all $j \leq
n$. Then

\bitem \item A \emph{$B$-definable $(i_{1},\dots,i_{n},0)$-cell}
$D$ is a set of the form\\
\[
\{ (x,t) \in \mathcal{M}^{n+1} \ | \ \ov{x} \in
C \wedge t=\alpha(x,b)\}
\]

where $\alpha$ is an $B$-definable linear function. In this case we denote $D$ as a
graph $\Gamma(C,\alpha(x,b))$ of the function $\alpha$
over the domain $C$.

\item A \emph{$B$-definable $(i_{1},\dots, i_{n},1)$-cell} is a
set of the form
\[
D= \left\{ (x, t)\in \mathcal{M}^{n+1} \ | \ x \in C
\wedge \left( (\alpha(x,\ov{b}) \square_{1} t \square_{2}
\beta(x,b)) \ | \ t \equiv_{N} k \right)\right\}
\]
where $\alpha, \beta$ are $B$-definable linear functions defined on $C$, $0\leq k < N$ are integers, the symbol $\square_{i}$
represents either $\leq$ or no condition for $i=1,2$ and the size of the  fibers $F_{x}=\{ t \in \mathcal{M} \ | \ (x,t) \in D  \}$
cannot be uniformly bounded over $C$,
meaning that there is no $N\in\mathbb{N}$ such that for all $x\in C$, $|F_{x}|\leq N$.

In this case we denote $D$ as $\left((\alpha(x,b)),
\beta(x,b)\right)_{C}$. \eitem \end{enumerate}
\end{definition}

\begin{definition} [Partition into cells] Let $X$ be a definable set. A
\emph{partition $\mathcal{P}$ of $X$} is a finite set $\{
C_{1},\dots, C_{n}\}$ of pairwise disjoint $B$-definable cells
such that $\displaystyle{X= \bigcup_{i=1}^{n} C_{i}}$.
\end{definition}

\begin{definition} [Piecewise $B$-linear function] Let $X$ be a definable set and
$f: X \rightarrow \mathcal{M}$ a definable function.
We say that $f$ is a piecewise $B$-linear function, if there is a partition $\mathcal{P}$ of $X$ into
$B$-definable cells such that for each $C \in \mathcal{P}$ $f{\upharpoonright}_{C}: C \rightarrow \mathcal{M}$ is a $B$-linear function.
\end{definition}

The following theorem is the Cell Decomposition Theorem for
Presburger arithmetic, Theorem $1$ in \cite{cell}.

\begin{fact} \label{celldecomposition} (Cell Decomposition Theorem).
\begin{enumerate} \item Let $X$ be an $B$-definable set. Then there is a
partition $\mathcal{P}$ of $X$ in $B$-definable cells. \item Let
$X \subseteq \mathcal{M}^{n}$ and $f: X \rightarrow \mathcal{M}$
be definable, then $f$ is a piecewise linear function. Moreover,
if $X$ and $f$ are $B$-definable, then we can take the cells also
$B$-definable. \end{enumerate} \end{fact}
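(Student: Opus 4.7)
The plan is to prove (1) and (2) simultaneously by induction on the ambient dimension $n$, combining quantifier elimination with Corollary \ref{terminos} so that the inductive step reduces to the one-variable case.

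The base case $n=1$ is immediate from Proposition \ref{atomic}: a $B$-definable subset of $\mathcal{M}$ is a Boolean combination of atomic conditions of the form $x=b$, $x\leq b$, $x\geq b$, and $x\equiv_{N} c$ with $b\in dcl(B)$. Passing to disjunctive normal form, combining simultaneous congruences via the Chinese remainder theorem, and splitting to get pairwise disjoint sets yields a partition into finitely many $0$-cells and $1$-cells.

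For the inductive step, let $X\subseteq \mathcal{M}^{n+1}$ be $B$-definable and write $X$ as the solution set of a quantifier-free $\varphi(\bar{x},t)$ with $\bar{x}\in\mathcal{M}^{n}$. By Corollary \ref{terminos} every $\mathcal{L}_{Pres}$-term appearing in $\varphi$ has the shape $s\,t+\tau(\bar{x})$, so after clearing common factors each atomic subformula in which $t$ occurs becomes one of $t=\alpha(\bar{x})$, $t\leq\alpha(\bar{x})$, $t\geq\alpha(\bar{x})$, or $t\equiv_{N}\alpha(\bar{x})$, where $\alpha$ is a $B$-linear function of $\bar{x}$. Let $\alpha_{1},\dots,\alpha_{r}$ enumerate all such $\alpha$, and let $M$ be the least common multiple of the moduli $N$ appearing in $\varphi$ together with the denominators appearing in the descriptions of the $\alpha_{j}$. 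Apply the inductive hypothesis to $\pi(X)\subseteq \mathcal{M}^{n}$, and then iteratively refine the resulting partition by fixing a linear order among $\alpha_{1}(\bar{x}),\dots,\alpha_{r}(\bar{x})$ and by fixing the residue of each coordinate $x_{i}$ modulo $M$; each refinement remains a finite union of $B$-cells by applying (1) anew in dimension $n$. Over each cell $C$ of this refined partition the truth value of every atomic subformula of $\varphi$, viewed as a condition on $t$ with $\bar{x}\in C$, depends only on $t$. Applying the one-variable case to a representative fiber then produces the desired cells over $C$: graph cells $\Gamma(C,\alpha_{j})$ when the fiber is a singleton and $(i_{1},\dots,i_{n},1)$-cells $(\alpha,\beta)_{C}$ when it is an unbounded interval with congruence. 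Claim (2) follows by applying (1) to $\Gamma(f)\subseteq\mathcal{M}^{n+1}$: since $f$ is single-valued, every cell in the decomposition must be of type $(i_{1},\dots,i_{n},0)$, i.e.\ a graph $\Gamma(C_{j},\alpha_{j})$ realizing $f{\upharpoonright}_{C_{j}}$ as the $B$-linear function $\alpha_{j}$.

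The main obstacle is the uniform handling of congruences along the projection to $\mathcal{M}^{n}$: to promote each atomic condition $t\equiv_{N}\alpha(\bar{x})$ into the canonical form $t\equiv_{N'} k$ demanded by the cell definition, one needs the residue of $\alpha(\bar{x})$ modulo $N$ to be constant in $\bar{x}$, which is why the preliminary refinement of the base cells by the residues of each $x_{i}$ modulo $M$ is essential. A secondary subtlety is the no-uniform-bound clause in the definition of a $1$-cell: a candidate $(i_{1},\dots,i_{n},1)$-cell whose fibers happen to be uniformly bounded by some integer must be split into finitely many graph cells by case analysis on the possible fiber size and on the values $t-\alpha(\bar{x})$. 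Ensuring that this secondary subdivision is finite and stays inside the cell formalism is the technically delicate part of Cluckers' bookkeeping.
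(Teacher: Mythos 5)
The paper does not prove this statement: it is recorded as a Fact and cited verbatim as Theorem 1 of Cluckers \cite{cell}, so there is no in-paper argument to compare against. Your sketch is the standard quantifier-elimination-based inductive proof and matches in spirit the route Cluckers takes. The simultaneous induction on ambient dimension, the reduction of the inductive step to the one-variable case via Corollary \ref{terminos} and Proposition \ref{atomic} after refining the base, and the derivation of (2) from (1) applied to $\Gamma(f)$ using single-valuedness to force $(\ldots,0)$-cells, are all correct.

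The two points you flag as technically delicate are genuine, and your indications of how to resolve them do go through. For the congruence bookkeeping: an atom $s t+\tau(\bar x)\equiv_N c$ is solvable in $t$ only when $\tau(\bar x)$ lies in specified residue classes modulo $\gcd(s,N)$, and the resulting condition collapses to $t\equiv_{N'} k$ for a \emph{constant} $k$ only once the residue of $\tau(\bar x)$ modulo $N$ is fixed; your refinement by residues modulo $M$ achieves this, provided $M$ is taken divisible by every modulus $N$ and every leading coefficient $s$ occurring. For the bounded-fiber dichotomy: over a base cell $C$, a candidate top $1$-cell with fibers of size at most $M$ is covered by the graphs $\{(\bar x,\alpha(\bar x)+k):\bar x\in D_k\}$ where $D_k=\{\bar x\in C:\alpha(\bar x)+k\le\beta(\bar x)\wedge\alpha(\bar x)+k\equiv_N c\}$ for $0\le k\le M$, and applying (1) in dimension $n$ to each $D_k$ keeps everything within the cell formalism. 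One presentational caveat: you describe converting the $t$-atoms to the form $t\,\square\,\alpha(\bar x)$ before refining the base, but division by $s$ is only well-defined after residues are fixed; the logically clean order is to refine first and divide afterwards. You implicitly acknowledge this in your ``main obstacle'' paragraph, so this is an ordering issue in the exposition rather than a mathematical gap.
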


The following follows immediately from the above fact and the definition of piecewise $B$-linear.

\begin{corollary} \label{clausuradefinible} For any set $B$ and any tuple $b$ we
have $b \in dcl \left( B\right)$ if and only if there is an
$\emptyset$-definable linear function $\alpha$ and a tuple $a
\subseteq B$ such that $\alpha(a)=b$.
\end{corollary}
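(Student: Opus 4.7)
The right-to-left direction is immediate from the definition of definable closure, since any $\emptyset$-linear function is $\emptyset$-definable, and so $\alpha(a)$ lies in $dcl(B)$ whenever the parameters $a$ lie in $B$. The content of the corollary is the converse, and the plan is to extract the $\emptyset$-linear function directly from the Cell Decomposition Theorem applied to an appropriate definable function.

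More precisely, suppose $b \in dcl(B)$. Then there is an $\mathcal{L}_{Pres}$-formula $\phi(x, y)$ and a tuple $a$ from $B$ such that $b$ is the unique realization of $\phi(x, a)$. I would then consider the $\emptyset$-definable set
\[
Y = \{\, y \in \mathcal{M}^{|a|} \,:\, \exists!\, x \; \phi(x, y)\,\},
\]
which contains $a$, together with the $\emptyset$-definable function $f \colon Y \to \mathcal{M}$ assigning to each $y \in Y$ the unique $x$ satisfying $\phi(x, y)$. Note that $f(a) = b$ by construction.

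The key step is then to invoke part (2) of the Cell Decomposition Theorem (Fact \ref{celldecomposition}): since $f$ and its domain $Y$ are both $\emptyset$-definable, $f$ is piecewise $\emptyset$-linear, meaning there is a partition of $Y$ into $\emptyset$-definable cells $C_1, \dots, C_k$ such that $f\!\upharpoonright_{C_i}$ is an $\emptyset$-linear function $\alpha_i$ for each $i$. Picking the cell $C_j$ that contains $a$ yields $\alpha_j(a) = f(a) = b$, and $\alpha_j$ is the desired $\emptyset$-linear function.

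There is no real obstacle here; the only subtlety is to make sure that when we apply Fact \ref{celldecomposition} to the $\emptyset$-definable function $f$, the resulting piecewise linearity uses $\emptyset$-definable cells and $\emptyset$-linear (and not merely $B$-linear) pieces, which is exactly the content of the ``Moreover'' clause of that fact applied with $B = \emptyset$.
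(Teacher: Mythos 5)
Your proof is correct and takes essentially the same approach as the paper, which simply asserts that the corollary ``follows immediately from the above fact and the definition of piecewise $B$-linear.'' You have merely spelled out the implicit argument: form the $\emptyset$-definable function $f$ extracting the unique witness, apply Fact~\ref{celldecomposition}(2) with $B=\emptyset$, and evaluate the resulting $\emptyset$-linear piece at $a$.
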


\begin{remark} By the previous corollary, every linear function definable
over $B$, \[\displaystyle{\alpha(x)= \sum_{i=1}^{n} s_{i} \left(
\frac{x_{i}-c_{i}}{n_{i}}\right)+\gamma'}\] can be seen as
$\alpha(x,b)= f(x)+\gamma(b)$, where
$\displaystyle{f(x)}=\sum_{i=1}^{n} s_{i} \left(
\frac{x_{i}-c_{i}}{n_{i}}\right)$, $\gamma$ is an
$\emptyset$-definable linear function, $b$ is a tuple of elements
of $B$ and $\gamma'=\gamma(b)$.
 \end{remark}

\subsection{Dimension for definable sets in Presburger}\label{sub1.2}
For any $(i_{1},\dots,i_{n})$-cell $C$ we define
$\displaystyle{dim_{cell}(C)= \sum_{j=1}^{n}i_{j}}$, and by cell
decomposition we can extend this notion of dimension to any
definable set as
\[dim_{cell}(X)= max \{ \dim_{cell}(C) \ | \ C \
\text{ is a cell and } C\subseteq X\}.\]

Belegradek, Peterzil and Wagner proved that Presburger arithmetic
has the Exchange property (see \cite{quasiomin}), so the dimension
of a definable set $X$ is well defined. We introduce the exact
definition.

\begin{definition}  \begin{enumerate} \item Let $a=(a_1,\ldots,a_n)\in \mathcal{M}^n$ and $B$ a
set of parameters. The \emph{dimension of $a$ over $B$} is
the size of a maximal independent subset of $\{a_1,\ldots,a_n\}$
with respect to definable closure. Namely, $\dim(a)=k$ if
there are $a_{i_1},\ldots,a_{i_k}\in \{a_1,\ldots,a_n\}$ such
that: \bitem \item $a_{i_1}\not\in dcl(B)$, and
$a_{i_{j+1}}\not\in dcl(a_{i_1},\ldots,a_{i_j},B)$ for
$j=1,2,\ldots,k$. \item $a_j\in dcl(a_{i_1},\ldots,a_{i_k},B)$ for
all $j=1,\ldots,n$. \eitem

\item Let $X\subseteq \mathcal{M}^n$ be a $B$-definable set. We
define the \emph{dimension of $X$} by:\[\dim_{def}(X)=\max
\{\dim(a/B):a\in X\}.\]\\
Moreover, we say that $a \in X$ is a dim-generic point if $\dim(a/B)=\dim(X)$.
\item Given two points $a,b \in \mathcal{M}^{n}$ we say that $a$ is independent from $b$ over $A$ if $dim(a/Ab)=dim(a/A)$.
In general, if $X \subseteq \mathcal{M}^{n}$ is a definable, given two points $a,b \in X$ we say that they are independent
if they are independent over the parameters defining $X$.
 \end{enumerate}
 \end{definition}

The following is Corollary 1 in \cite{cell}.
\begin{fact}
Let $X$ be any definable set in Presburger arithmetic. Then
$dim_{cell}(X)=dim_{def}(X)$. Thus, we will indicate the dimension
of $X$ as $dim(X)$. \end{fact}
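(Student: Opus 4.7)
The plan is to reduce to a single cell using Fact~\ref{celldecomposition} and then argue by induction on the ambient dimension, exploiting the explicit description of $dcl$ from Corollary~\ref{clausuradefinible}.

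Write $X=\bigsqcup_{i=1}^{k} C_i$ as a finite disjoint union of $B$-definable cells. Both notions of dimension respect finite unions: $\dim_{cell}(X)=\max_i \dim_{cell}(C_i)$ by definition, and $\dim_{def}(X)=\max_i \dim_{def}(C_i)$ because every $a \in X$ belongs to a unique $C_i$, so $\dim(a/B)$ is realized inside some $C_i$. Hence it suffices to show $\dim_{def}(C)=\dim_{cell}(C)$ for a single $B$-definable cell $C$ of type $(i_1,\dots,i_n)$.

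For the upper bound $\dim_{def}(C)\leq \dim_{cell}(C)$, I induct on $n$. Let $C'$ be the projection of $C$ to the first $n-1$ coordinates (a cell of type $(i_1,\dots,i_{n-1})$) and let $a=(a_1,\dots,a_n)\in C$. If $i_n=0$, the last coordinate is a fixed $B$-linear function of $(a_1,\dots,a_{n-1})$, hence $a_n\in dcl(a_1,\dots,a_{n-1},B)$ and the dimension does not grow. If $i_n=1$, then $\dim(a/B)\leq \dim(a_1,\dots,a_{n-1}/B)+1$. Combined with the inductive bound $\dim(a_1,\dots,a_{n-1}/B)\leq \sum_{j<n} i_j$, this yields $\dim(a/B)\leq \dim_{cell}(C)$.

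For the reverse inequality, I build a point of maximal dimension, again by induction. Pick $(a_1,\dots,a_{n-1})\in C'$ with $\dim(a_1,\dots,a_{n-1}/B)=\sum_{j<n} i_j$. If $i_n=0$, the forced value $a_n$ works. If $i_n=1$, the fiber of $C$ over $(a_1,\dots,a_{n-1})$ is cut out by linear bounds in $(a_1,\dots,a_{n-1},b)$ and a congruence condition. The non-uniform boundedness condition on the cell, combined with saturation of $\mathcal{M}$, makes this fiber infinite. By Corollary~\ref{clausuradefinible}, $dcl(a_1,\dots,a_{n-1},B)$ has cardinality at most $|B|+\aleph_0$, being the image of finite tuples from $\{a_1,\dots,a_{n-1}\}\cup B$ under countably many $\emptyset$-linear functions. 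In a sufficiently saturated $\mathcal{M}$ we can therefore choose $a_n$ in the fiber with $a_n\not\in dcl(a_1,\dots,a_{n-1},B)$, giving $\dim(a/B)=\sum_{j\leq n} i_j$.

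The main obstacle is this last step: upgrading the global hypothesis ``fibers not uniformly bounded'' to the local statement ``the fiber over the chosen dim-generic point is infinite''. A priori non-uniform boundedness only yields fibers of arbitrarily large finite size, so one must invoke compactness to produce a fiber of infinite size, and then arrange --- by subdividing $C'$ into $B$-definable subcells via another application of cell decomposition --- that the dim-generic point of $C'$ lies in the locus of infinite fibers. Were no such subcell of full dimension $\sum_{j<n} i_j$ to carry infinite fibers, a uniform finite bound on the fibers of $C$ would follow, contradicting the definition of an $(i_1,\dots,i_{n-1},1)$-cell.
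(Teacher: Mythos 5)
The paper does not actually prove this statement: it is presented as a Fact cited directly as Corollary~1 of Cluckers' paper, so there is no paper-internal argument to compare against. Evaluating your proposal on its own terms: the reduction to a single cell, the upper bound via $\operatorname{dcl}$, and the use of a cardinality-plus-saturation argument to step outside $\operatorname{dcl}(a_1,\dots,a_{n-1},B)$ once the fiber is known to be infinite are all sound. The gap is exactly where you flag it, but your proposed repair does not work.

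Your claim is: ``were no subcell of $C'$ of full dimension to carry infinite fibers, a uniform finite bound on the fibers of $C$ would follow.'' That implication is false. After any finite cell decomposition of $C'$, it is entirely possible that every full-dimensional subcell has all fibers finite (even bounded on that subcell) while a lower-dimensional subcell has fibers of unbounded, or infinite, size; this already defeats a uniform bound over $C'$ without producing a full-dimensional infinite-fiber locus. So the contradiction you want is not forced. Moreover, the passage ``the non-uniform boundedness condition, combined with saturation, makes this fiber infinite'' silently conflates \emph{some} fiber being infinite (which compactness gives) with the fiber over your \emph{chosen} dim-generic base point being infinite (which is what you need).

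What actually closes the gap is the linearity of the bounding functions, which your proof mentions but never exploits. Write $h(x):=\beta(x)-\alpha(x)$ and push it through the cell projection $\pi_{\bar\imath}$ to get a $B$-linear function $\tilde h$ in the $d=\dim C'$ free coordinates. If $\tilde h$ is constant, say $\equiv\gamma$, then either $\gamma$ is a standard integer, forcing a uniform bound on all fibers and contradicting the definition of a $(\ldots,1)$-cell, or $\gamma$ is infinite and every fiber is infinite. If $\tilde h$ is non-constant, then for a dim-generic $a'$ over $B$ the value $h(a')$ cannot lie in $\operatorname{dcl}(B)$: a relation $\tilde h(a'_{\mathrm{free}})=c$ with $c\in\operatorname{dcl}(B)$ and some nonzero coefficient would place one free coordinate of $a'$ in the definable closure of the others and $B$, dropping $\dim(a'/B)$ below $d$. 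In particular $h(a')$ is not a standard integer, hence infinite, so the fiber over $a'$ is infinite, and your cardinality argument then applies as stated. This is the step your subdivision argument was meant to supply; replace it with the linearity argument and the proof is complete.
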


In \cite{cell} R. Cluckers proved that Presburger has elimination
of imaginaries, so that by (for example) Theorem 4.12 in
\cite{EaOn} we have that Presburger is rosy with
$\text{U}^{\text{\th}}$-rank equal to the dimension defined above.
In particular we have the following properties.

\begin{itemize}
\item For any two definable sets $X,Y$, $\dim(X \cup
Y)=\max\{\dim(X),\dim(Y) \}$.

\item If $f:X \rightarrow Y$ is a definable surjective function,
such that $dim(f^{-1}(a))=n$ for some $n\in \mathbb N$, then
$\dim(Y)+n= \dim(X)$.

\item If $f:X \rightarrow Y$ is a definable surjective function,
such that $dim(f^{-1}(a))\geq n$ for some $n\in \mathbb N$, then
$\dim(Y)+n\leq \dim(X)$.
\end{itemize}

We have the following.

\begin{remark} \label{dimfiniteindex} Let $G$ be a definable group in
$\mathcal{M}$. Let $H$ be a definable subgroup of $G$. Then
$dim(H)=dim(G)$ if and only if $H$ has finite index in $G$.
\end{remark}

Given a definable set $X$, we will say that a tuple $a\in C$ is
\emph{dim-generic} if $\dim(a)=\dim(C)$. Two tuples $a$ and $b$
are \emph{independent} if $\dim(ab)=\dim(a)+\dim(b)$.

\begin{remark}\label{matriznot}
For each $i \leq k$, let $f_{i}: X_{i} \subseteq \mathcal{M}^{m}
\rightarrow \mathcal{M}$ be a linear function definable over a
small set of parameters $B$. Thus, for each $i\leq k$, there are
integers $s^{i}_{j}$, $0\leq c^{i}_{j} < n^{i}_{j}$ and an
element $\gamma_{i} \in dcl(B)$ such that:
\begin{align*}
f_{i}(x)= \sum_{j=1}^{m} s^{i}_{j}\left( \frac{x_{j}-c^{i}_{j}}{n^{i}_{j}}\right)+\gamma_{i}.
\end{align*}
 Let  $\displaystyle{X=\bigcap_{i=1}^{k}X_{i}}$ and define a function $g: X  \rightarrow \mathcal{M}^{k}$,
 such that  $g(x)=(f_{1}(x),\dots,f_{k}(x))$. \\

 A matrix representation of $g$ is $A \in \mathbb{Q}_{k \times m}$
  a tuple $c \in \mathbb{Z}^{mk}$ and a vector $\gamma \in dcl(B)^{k}$, where:
\[
A= \left( \frac{s_{j}^{i}}{n_{j}^{i}}\right)_{1 \leq i \leq k, 1 \leq j \leq m}
\]
and $\gamma$ is a vector
$
\gamma=[\gamma_{1},\gamma_{2}, \dots, \gamma_{k}]^{T}$ and we define $c=(c_{1}^{1},\dots,c_{m}^{1},c_{1}^{2},\dots,c_{m}^{2},\dots, c_{1}^{k},\dots,c_{m}^{k})$.

So $g(x)$ is achieved by multiplying each row of the matrix $A$ by the translate $x-{c^i}$ of $x$ and then adding $\gamma$.

We will abuse notation and use
$A_{c}(x)+\gamma$ to refer to $g(x)$.
\end{remark}

\section{Definable groups in Presburger arithmetic are abelian-by-finite}\label{sec2}

We will now prove that every definable group in
Presburger arithmetic is abelian-by-finite. We begin with
some notation.

\subsection{Open cells and boxes}\label{sub1.4}
\begin{definition} Let $C \subseteq \mathcal{M}^{n}$ be an
$\mathcal{M}_{0}$-definable cell. We say that $C$ is an \emph{open
cell} if $dim(C)=n$. \end{definition}

\begin{definition} [Box around a point $a$]
Let $a=(a_{1},\dots,a_{n})$ be a point in $\mathcal{M}^{n}$, we
define a box $B$ around $a$ as a product of $n$ $1$-cells $B=B_{1}
\times \dots \times B_{n}$, where for each $i \leq n$,  $a_{i} \in
B_{i}= \{ \alpha_{i} \square_{1} x \square_{2} \beta_{i} \ | \
x\equiv_{N_{i}}c_{i}\}$ and both sets $[\alpha_{i}, a_{i}]$,
$[a_{i}, \beta_{i}]$ are infinite.

 \end{definition}

\begin{lemma} \label{cajaendos} Let $C$
be a $(1,1)$-cell definable over $\mathcal{M}_{0}$. Let $(a,b)$ be
a point of $C$ of dimension 2, then there is a definable box $B$,
such that $B \subseteq C$ and $B$ is a box around $(a,b)$.
\end{lemma}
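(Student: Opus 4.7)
The plan is to build $B_1$ and $B_2$ explicitly as symmetric intervals of a common ``infinite width'' $h$ about $a$ and $b$, respectively. Write the $(1,1)$-cell in the form
\[C=\{(x,t):x\in C_1,\ \alpha(x)\,\square_1\,t\,\square_2\,\beta(x),\ t\equiv_N k\},\]
with base $1$-cell $C_1=\{x:\alpha_1\,\square\,x\,\square\,\beta_1,\ x\equiv_{N_1}c_1\}$, and with $\alpha,\beta$ being $\mathcal{M}_0$-linear, say $\alpha(x)=s_\alpha(x-c_\alpha)/k_\alpha+\gamma_\alpha$ and analogously for $\beta$. The hypothesis $\dim(a,b)=2$ forces four ``gaps'' to be positive non-standard in $\mathcal{M}$: since $dcl(\mathcal{M}_0)=\mathcal{M}_0$ and $a\notin\mathcal{M}_0$, neither $a-\alpha_1$ nor $\beta_1-a$ can equal any standard integer, hence each is non-standard (when the respective $\square$ is present; an absent $\square$ just deletes its bound); and since $\alpha(a),\beta(a)\in dcl(\mathcal{M}_0 a)$ while $b\notin dcl(\mathcal{M}_0 a)$, likewise $b-\alpha(a)$ and $\beta(a)-b$ are non-standard.

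Using these non-standard gaps, I choose a single positive non-standard width
\[h=\left\lfloor\frac{\min\bigl(a-\alpha_1,\ \beta_1-a,\ b-\alpha(a),\ \beta(a)-b\bigr)}{D}\right\rfloor,\]
with $D$ a fixed standard positive integer large enough that $h+(|s_\alpha|/k_\alpha)h\le b-\alpha(a)$ and $h+(|s_\beta|/k_\beta)h\le \beta(a)-b$ (for instance $D=2(k_\alpha+|s_\alpha|)(k_\beta+|s_\beta|)$); by construction also $h\le a-\alpha_1$ and $h\le \beta_1-a$. Then set
\[B_1=\{x:a-h\le x\le a+h,\ x\equiv_{N_1}c_1\},\qquad B_2=\{t:b-h\le t\le b+h,\ t\equiv_N k\}.\]
These are $1$-cells containing $a$ and $b$ respectively, and each half of each contains approximately $h/N_1$ or $h/N$ elements --- an infinite number, because $h$ is non-standard.

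Finally I verify $B_1\times B_2\subseteq C$. For $(x,t)\in B_1\times B_2$, the base condition $x\in C_1$ follows from $\alpha_1\le a-h\le x\le a+h\le\beta_1$ combined with $x\equiv_{N_1}c_1$, while $t\equiv_N k$ holds by construction. Since both $x$ and $a$ lie in $C_1$, they satisfy the congruence $x\equiv_{k_\alpha}c_\alpha$ implicit in the definition of $\alpha$, so $\alpha(x)-\alpha(a)=s_\alpha(x-a)/k_\alpha$ is an integer with $|\alpha(x)-\alpha(a)|\le(|s_\alpha|/k_\alpha)h$; hence
\[\alpha(x)\ \le\ \alpha(a)+\tfrac{|s_\alpha|}{k_\alpha}h\ \le\ b-h\ \le\ t,\]
and the symmetric computation gives $t\le\beta(x)$. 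The main obstacle is really only bookkeeping: handling the cases in which some $\square_i$ is absent (which simply deletes the corresponding constraint and its upper bound on $h$) and choosing a single width $h$ satisfying several simultaneous upper bounds. Both are handled uniformly by the definition of $h$ above once the non-standardness of the four gaps has been established --- which is precisely where $\dim(a,b)=2$ enters.
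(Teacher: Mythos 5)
Your proof is correct, and it takes a genuinely different route from the paper's. The paper realizes two partial types $\Sigma_1(x),\Sigma_2(x)$ --- each asserting that $x$ lies on one side of $a$, avoids every point of $dcl(\mathcal{M}_0 a)\cap D$ on that side, and leaves an infinite gap between $b$ and $\alpha(x)$ (resp.\ $\beta(x)$) --- and then invokes saturation of $\mathcal{M}$ to obtain endpoints $p<a<q$, taking the box to be $[p,q]\times[\alpha(q),\beta(p)]$ after splitting into four cases according to the monotonicity of $\alpha$ and $\beta$. You instead observe that $\dim(a,b)=2$ forces the four gaps $a-\alpha_1$, $\beta_1-a$, $b-\alpha(a)$, $\beta(a)-b$ to be nonstandard (i.e., the corresponding intervals are infinite), and then shrink the smallest of them by a fixed standard factor $D$ controlled by the slopes $|s_\alpha|/k_\alpha$, $|s_\beta|/k_\beta$ to produce an explicit symmetric width $h$. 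This is more constructive (the box is named outright), avoids saturation entirely (the paper needs it only for the compactness step), and, because you bound the drift of $\alpha$ and $\beta$ by absolute values of slopes rather than by a sign split, handles increasing and decreasing bounding functions uniformly. The paper's formulation dovetails somewhat more directly with how the lemma is reused in the induction of Lemma~\ref{caja}, but both arguments are valid proofs of the present statement; just take care, when you translate from the rational inequality $|\alpha(x)-\alpha(a)|\le(|s_\alpha|/k_\alpha)h$ to the integer $\alpha(x)-\alpha(a)$, that the intermediate quantity $(|s_\alpha|/k_\alpha)h$ need not lie in $\mathcal{M}$ --- the integrality of $\alpha(x)-\alpha(a)$ itself is what makes the chain of inequalities legitimate.
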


\begin{proof} Assume that $C=\{
(x,t) \ | \ x \in D \wedge \alpha(x) \leq t \leq \beta(x) \wedge t
\equiv_{m}d\}$, where $D$ is a $1$-cell of the form $D= \{ \gamma
\leq x \leq \delta \ |  \ x \equiv_{n} c\}$. Without loss of
generality, we may assume that $\alpha$ and $\beta$ are not
constant functions and neither $\pm \infty$. Then we have the following cases:

\begin{enumerate} \item $\beta$ and $\alpha$
are both increasing,

\item $\beta$ is increasing and $\alpha$ is decreasing,

\item $\beta$ is
decreasing and $\alpha$ is increasing,

\item $\beta$ and $\alpha$ are both
decreasing.
\end{enumerate}

We will only show how to solve the first case, because the others follow
in a similar way. So assume that $\beta$ and $\alpha$ are both
increasing. We consider the following types.
\begin{align*}
\Sigma_{1}(x)&= \{ x \in D \} \cup \{ x < a\} \cup \{ \beta(x)> b\} \cup \{(x \neq d) : d \in dcl(Ba) \cap D \wedge d < a \} \\
&\cup \left\{ \exists y_{1},\dots,y_{n} \left(\left( \bigwedge_{i=1}^{n} b< y_{i}< \beta(x) \right) \wedge \left( \bigwedge_{i\neq j} y_{i} \neq y_{j} \right)\right):n<\omega\right\}, \\
\Sigma_{2}(x)&= \{ x \in D \} \cup \{ x > a\} \cup \{ \alpha(x)< b\} \cup \{(x \neq d) : d \in dcl(Ba) \cap D \wedge d > a \} \\
&\cup \left\{ \exists y_{1},\dots,y_{n} \left(\left(
\bigwedge_{i=1}^{n} \alpha(x)< y_{i} < b \right) \wedge \left(
\bigwedge_{i\neq j} y_{i} \neq y_{j}
\right)\right):n\in\omega\right\}.
\end{align*}

By compactness both $\Sigma_{1}(x)$ and $\Sigma_{2}(x)$ are
consistent so by saturation of $\mathcal{M}$ there are elements
$p,q$ such that $\mathcal{M} \vDash \Sigma_{1}(p)$ and
$\mathcal{M} \vDash \Sigma_{2}(q)$. Define the following
$1$-cells:
\begin{align*}
I_{1}&=\{ p \leq x \leq q \ | \ x \equiv_{n} c \} \ \text{and} \ I_{2}=\{ \alpha(q) \leq t \leq \beta(p) \ | \ t \equiv_{m} d \} .
\end{align*}

So $(a,b) \in I_{1}\times I_{2}$, and by construction $I_{1}
\times I_{2}$ is a box around $(a,b)$. Since $I_{1} \subseteq D$,
and for every element $x \in I_{1}$, $p \leq x \leq q$, we have
that $\beta(p) \leq \beta(x) $ and $\alpha(x) \leq \alpha(q)$, so
$I_{1} \times I_{2} \subseteq C$. \end{proof}

\begin{lemma} \label{caja} Let $C$ be an open $n$-cell  definable over
$\mathcal{M}_{0}$ and $a$ be a dim-generic element of $C$. Then
we can find an $n$-box $B$ such that $B \subseteq C$ and $B$ is a box around
$a$.
\end{lemma}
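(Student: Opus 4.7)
I argue by induction on $n$. Base case $n=1$: dim-genericity gives $a\notin dcl(\mathcal{M}_0)$, so the $1$-cell $C$ has infinitely many elements of its congruence class on each side of $a$; saturation yields $p<a<q$ in $C$ with $[p,a]$ and $[a,q]$ infinite, producing the required box. The case $n=2$ is Lemma~\ref{cajaendos}.

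For the inductive step $n\geq 3$, realize $C$ as a $(1,\dots,1,1)$-cell over an open $(n-1)$-cell $C'\subseteq\mathcal{M}^{n-1}$:
\[C=\{(y,t):y\in C',\ \alpha(y)\,\square_1\,t\,\square_2\,\beta(y),\ t\equiv_N d\},\]
with $\alpha,\beta$ $\mathcal{M}_0$-linear on $C'$; we may assume neither is constant nor $\pm\infty$. Write $a=(a',a_n)$. Since $\dim(a/\mathcal{M}_0)=n$ we have $\dim(a'/\mathcal{M}_0)=n-1$, so $a'$ is dim-generic in $C'$ and $a_n\notin dcl(\mathcal{M}_0 a')$. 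By induction, pick an $(n-1)$-box $B'=I_1\times\dots\times I_{n-1}\subseteq C'$ around $a'$.

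It then suffices to find a sub-box $B''\subseteq B'$ around $a'$ together with a $1$-cell $B_n$ around $a_n$ such that $B''\times B_n\subseteq C$. Because $\alpha,\beta$ are linear they are monotone in each coordinate, so the maximum of $\alpha$ and the minimum of $\beta$ over any product sub-box of $B'$ are attained at explicit corners $x^{\max}_\alpha,x^{\min}_\beta$ whose coordinates are determined by the signs of the coefficients. Mimicking the saturation argument of Lemma~\ref{cajaendos}, I realize a partial type over $\mathcal{M}_0 a$ in variables $(p_1,q_1,\dots,p_{n-1},q_{n-1},u,v)$ asserting, for each $i<n$, that $p_i<a_i<q_i$, both $[p_i,a_i]$ and $[a_i,q_i]$ are infinite, $[p_i,q_i]\subseteq I_i$, and $p_i,q_i$ respect the congruence of $I_i$, together with $u=\alpha(x^{\max}_\alpha)$, $v=\beta(x^{\min}_\beta)$, $u<a_n<v$, and both $[u,a_n]$ and $[a_n,v]$ infinite. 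To see that a finite subtype with integer bound $M$ is realized, choose $p_i,q_i$ with $|p_i-a_i|$ and $|q_i-a_i|$ equal to fixed standard integers $\geq M$ (adjusted for the congruences of the $I_i$); linearity bounds $\alpha(x^{\max}_\alpha)-\alpha(a')$ and $\beta(a')-\beta(x^{\min}_\beta)$ by standard integers, and since $\alpha(a'),\beta(a')\in dcl(\mathcal{M}_0 a')$ while $a_n\notin dcl(\mathcal{M}_0 a')$, the differences $a_n-\alpha(a')$ and $\beta(a')-a_n$ exceed every standard integer, so the resulting $u,v$ satisfy $a_n-u,v-a_n\geq M$. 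Saturation then produces a realization, and $B=\prod_{i<n}[p_i,q_i]\times\{u\leq t\leq v\mid t\equiv_N d\}$ is the desired box.

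The main obstacle is coordinating the growth of the half-widths $|p_i-a_i|,|q_i-a_i|$ with the need to keep the corner-values $\alpha(x^{\max}_\alpha),\beta(x^{\min}_\beta)$ uniformly separated from $a_n$ on the correct side. The observation that makes a single saturation argument work is that each finite subtype only involves standard-integer bounds, so we can keep the half-widths standard-sized while exploiting the fact that $a_n$ is nonstandardly far from $\alpha(a')$ and $\beta(a')$ to secure the top-coordinate constraints.
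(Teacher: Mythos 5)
Your proof is correct, and while it shares the same overall architecture as the paper's (induction on $n$, base case from dim-genericity of $a$, saturation/compactness to shrink the fibre coordinate in the inductive step), your inductive step is genuinely tighter and arguably closes a gap in the paper's argument. The paper restricts $\alpha$ and $\beta$ to the coordinate lines $L_i$ through $a$ inside the $(n-1)$-box, applies the two-dimensional lemma on each line to get $p_i,q_i$, and then sets $r=\max_i\alpha_i(q_i)$, $R=\min_i\beta_i(p_i)$. But this only controls $\alpha,\beta$ \emph{along the axes} through $a$; the maximum of an increasing linear $\alpha$ over the product box $\prod J_i$ is attained at the corner $(q_1,\dots,q_{n-1})$, and $\alpha(q_1,\dots,q_{n-1})-\alpha(a')=\sum_i\bigl(\alpha_i(q_i)-\alpha(a')\bigr)$ can strictly exceed each summand, so the concluding ``$B\subseteq C$'' does not follow from the axis-by-axis bounds alone. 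You sidestep this by writing the corner values $\alpha(x^{\max}_\alpha)$ and $\beta(x^{\min}_\beta)$ directly into the partial type, and your key observation — that each finite fragment only imposes standard-integer bounds, so the half-widths of the trial box can be kept standard while $a_n$ sits at nonstandard distance from both $\alpha(a')$ and $\beta(a')\in dcl(\mathcal{M}_0a')$ — is exactly what makes the consistency check go through in a single stroke. In short: same skeleton, but your corner-value bookkeeping is the more careful (and, I would say, the correct) version of the paper's line-restriction argument.
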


\begin{proof} We proceed by induction on $n$. Let $C= \{ \alpha
\square_{1} x \square_{2} \beta \ | \ x\equiv_{N} c\}$, where
$\alpha, \beta \in \mathcal{M}_{0}$. Since $a$ is dim-generic, $[\alpha,a]$ and $[a,\beta]$
are both infinite,  so $C$ is already a box around $a$.

Let $C$ be an open $(n+1)$-cell, say of the form $\{ (x,t) \
| \ x \in D \wedge \alpha(x) \leq t \leq \beta(x)
\wedge t \equiv_{m} c\}$, where $D$ is an $n$-open cell and
$\alpha$ and $\beta$ are linear functions. Let
$(a_{1},\dots,a_{n+1})$ be a dim-generic point of $C$. By
induction there is a box $S$ around $(a_{1},\dots,a_{n})$
such that $S \subseteq D$. Assume that $S=I_{1} \times \dots
\times I_{n}$, and that for each $ i \leq n$, $I_{i}=\{
\delta^{i}_{1} \leq x \leq \delta^{i}_{2} \ | \  x \equiv_{k_{i}}
l_{i}\}$. Now define the following lines inside $S$
\begin{align*}
L_{1}&=I_{1} \times \{ a_{2}\} \times \cdots \times\{a_{n}\},\\
&\vdots\\
L_{n} &= \{a_{1}\} \times \cdots \times \{a_{n-1}\} \times I_{n}.
\end{align*}
Observe that $a_{i} \in L_{i}$ for every $ i \leq n$, and the
restriction of $\alpha(x)$ and $\beta(x)$ to $L_{i}$ are
lines. Let $\alpha_{i}$ and $\beta_{i}$ be the restrictions of
$\alpha(x)$ and $\beta(x)$ to $L_{i}$.

In order to simplify the notation, we will assume that all of
$\alpha_i$ and $\beta_i$ are increasing (the general result will
follow similarly).

Since $dim(a_{n+1}/ \mathcal{M}_{0},a_{1},\dots,a_{n})=1$,
following the argument of Lemma \ref{cajaendos} for each $L_{i}$
and $\alpha_{i}(x)$ and $\beta_{i}(x)$, we can find
$\delta_{1}^{i} \leq p_{i} < a_{i} < q_{i} \leq \delta^{i}_{2}$
such that the following hold:

\bitem \item The intervals $[p_{i},a_{i}]$ and $[a_{i},q_{i}]$ are
both infinite.

\item The intervals $[a_{n+1}, \beta_{i}(p_{i})]$ and
$[\alpha_{i}(q_{i}), a_{n+1}]$ are infinite.

\item The set $\{ p_{i} \leq x \leq q_{i} \ | \ x
\equiv_{k_{i}}l_{i}\} \times \{ \alpha_{i}(q_{i}) \leq t \leq
\beta_{i}(p_{i}) \ | \ t \equiv_{m} c\}$ is a $2$-box and it is a
subset of the $2$-open cell $
\{ (x,t) \ | \ x \in I_{i} \wedge \alpha_{i}(x) \leq t \leq
\beta_{i}(x) \wedge t\equiv_{m}c\}.$

Define $J_{i}=\{ p_{i} \leq x \leq q_{i} \ | \ x \equiv_{k_{i}}
l_{i}\}$ (notice that $a_{i} \in J_{i} \subseteq I_{i}$), $r= \max
\{ \alpha_{i}(p_{i}) \ | \  i \leq n\}$ and $R=\min\{\beta_{i}(p_{i}) \ |  i \leq n \}$.\\
Now, let $B= J_{1}\times \dots \times J_{n} \times \{ t \in \mathcal{M} \ | \ r \leq t \leq R \ | \ t \equiv_{m} c\},$ so $B$ is a box around $(a_{1},\dots,a_{n+1})$ and $B
\subseteq C$, as required.  \qedhere \eitem \end{proof}

\begin{remark}\label{intersectionboxes} Let $B_1,B_2$ be two boxes around a
point $a$, then $B_1 \cap B_2$ is a box around $a$.
\end{remark}

If $\bar{\imath}$ be a sequence of 0's and 1's, let $C$ be an $\bar{\imath}$-cell and let $\pi_{\bar{\imath}}$ be the projection of $C$ into the coordinates
where $\bar{\imath}$ has 1's. By definition of $\bar{\imath}$-cell, $\pi$ is a bijection of $C$ into $\pi_{\bar{\imath}}(C)$ and $\pi_{\bar{\imath}}(C)$ is an open cell.

\begin{definition}
let $C$ be an $\bar{\imath}$-cell, and let $n$ be the dimension of
the image of $\pi_{\bar{\imath}}$ (the number of ones in
$\bar{\imath}$). We define \emph{$C$-boxes} to be the preimages
under $\pi_{\bar{\imath}}\upharpoonright_{C}$ of $n$-boxes in
$Z^n$ contained in $\pi_{\bar{\imath}}(C)$.
\end{definition}

Notice that $B_c$ is a $C$-box if and only if
$B_c=\pi_{\bar{\imath}}^{-1}(B)\cap C$ for some open box $B\subset Z^n$
such that $dim(\pi_{\bar{\imath}}^{-1}(B)\cap C)=dim(C)$.

\bigskip

The following hold either from the definition, or from the
previous results using the fact that the restriction of
$\pi_{\bar{\imath}}$ to $C$ is a bijection into an open $n$-cell.

\begin{corollary} \label{cajabi} \label{intersectionCboxes} Let $C$ be a $\bar{\imath}$-cell definable over
$\mathcal{M}_{0}$ and let $a$ be a dim-generic element of $C$. Then
the following hold:

\begin{itemize}
\item There is a $C$-box $B$ such that $B \subseteq C$ and $B$ is a box around $a$.

\item Let $B_1,B_2$ be two $C$-boxes around $a$. Then $B_1
\cap B_2$ is a $C$-box around $a$.

\item If $D\supset C$ is a $\bar{\imath}$-cell, then any $C$-box is a
$D$-box.
\end{itemize}
\end{corollary}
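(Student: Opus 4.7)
The plan is to leverage the fact that $\pi_{\bar{\imath}}\upharpoonright_C$ is a definable bijection from $C$ onto the open $n$-cell $\pi_{\bar{\imath}}(C)$ (where $n$ is the number of $1$'s in $\bar{\imath}$), and then transfer Lemma \ref{caja} and Remark \ref{intersectionboxes} through this bijection. Since definable bijections preserve dimension, a dim-generic $a\in C$ maps to a dim-generic point $\pi_{\bar{\imath}}(a)$ in the open cell $\pi_{\bar{\imath}}(C)\subseteq Z^n$.

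For the first bullet, I would apply Lemma \ref{caja} directly to the open $n$-cell $\pi_{\bar{\imath}}(C)$ with the dim-generic point $\pi_{\bar{\imath}}(a)$ to produce an $n$-box $B\subseteq \pi_{\bar{\imath}}(C)$ around $\pi_{\bar{\imath}}(a)$. Then $B_c:=\pi_{\bar{\imath}}^{-1}(B)\cap C$ is, by definition, a $C$-box; it contains $a$ and is contained in $C$. The dimension condition $\dim(B_c)=\dim(C)$ is automatic because $\pi_{\bar{\imath}}\upharpoonright_C$ is a bijection and $\dim(B)=n$.

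For the second bullet, if $B_1,B_2$ are $C$-boxes around $a$, write $B_j=\pi_{\bar{\imath}}^{-1}(D_j)\cap C$ for open $n$-boxes $D_j\subseteq \pi_{\bar{\imath}}(C)$ containing $\pi_{\bar{\imath}}(a)$. By Remark \ref{intersectionboxes}, $D_1\cap D_2$ is again an $n$-box around $\pi_{\bar{\imath}}(a)$, and since $\pi_{\bar{\imath}}\upharpoonright_C$ is a bijection we have $B_1\cap B_2=\pi_{\bar{\imath}}^{-1}(D_1\cap D_2)\cap C$, which is a $C$-box.

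For the third bullet, suppose $D\supset C$ is an $\bar{\imath}$-cell and $B_c=\pi_{\bar{\imath}}^{-1}(B)\cap C$ is a $C$-box with $B\subseteq \pi_{\bar{\imath}}(C)\subseteq \pi_{\bar{\imath}}(D)$. The small point to check is that $\pi_{\bar{\imath}}^{-1}(B)\cap D=\pi_{\bar{\imath}}^{-1}(B)\cap C$: if $x\in D$ with $\pi_{\bar{\imath}}(x)\in B\subseteq \pi_{\bar{\imath}}(C)$, then some $x'\in C\subseteq D$ has $\pi_{\bar{\imath}}(x')=\pi_{\bar{\imath}}(x)$, and since $\pi_{\bar{\imath}}\upharpoonright_D$ is injective, $x=x'\in C$. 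Thus $B_c$ is of the required form to be a $D$-box, and $\dim(B_c)=n=\dim(D)$. I do not anticipate a real obstacle here: everything reduces, through the projection, to the already-established facts about open cells; the only thing requiring care is the injectivity argument in the last bullet to identify preimages in $C$ and in $D$.
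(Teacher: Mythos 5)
Your proof is correct and follows exactly the route the paper intends: the paper dispatches the corollary by saying it follows from the definition and the earlier results via the bijection $\pi_{\bar{\imath}}\upharpoonright_C$ onto an open $n$-cell, and you simply spell out the transfer of Lemma \ref{caja} and Remark \ref{intersectionboxes} along that bijection. The one nontrivial detail, the injectivity check that $\pi_{\bar{\imath}}^{-1}(B)\cap D=\pi_{\bar{\imath}}^{-1}(B)\cap C$ in the third bullet, is the right thing to verify and you verify it correctly.
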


\subsection{Every group operation is locally linear}\label{sub1.5}

Throughout this subsection, we will fix a definable group $G$ and
a cell decomposition of $G$. Unless otherwise specified, all cells
we refer to will be cells in this cell decomposition. Assume that
$G\subset Z^n$ and that it has dimension $d$.

The main purpose of this subsection is to show that for every
definable group $(G,\cdot, e)$ there is a dim-generic cell $C$ in
the cell decomposition of $G$ and a $C$-box $B_{a} \subseteq G$,
such that for every $x,y \in B_{a}$, we have that:
\begin{align*}
x \cdots a^{-1} \cdots y= x -a + y.
\end{align*}
For notation purposes, we will write $xy$ instead of $x\cdot y$
whenever $x$ and $y$ are elements in $G$.

Our proof is based on the work of Eleftheriou and Starchenko in
\cite{Pantelis}, where they prove the same result for definable
groups in vector spaces over division rings. We adapt many of
their methods to the context of Presburger arithmetic.

For the rest of the paper we will introduce notation for ``addition centered in $a$'' and
``multiplication centered in $a$''. So let $x\otimes_a y:=xa^{-1}y$ and $x\oplus_a y:=x-a+y$.

\bigskip

\begin{lemma}\label{celdassuma} Let $a, b$ be two dim-generic and independent points in $G$.
Let $C_1$ and $C_2$ be
${\bar{\imath}}_1$ and ${\bar{\imath}}_2$-cells containing $a$ and
$b$ respectively and such that $(a, b)$ is dim-generic
over the parameters defining $G, C_1$ and $C_2$.

Then there are $C_1$ and $C_2$-boxes
$B_{a}$  and $B_{b}$ around $a$ and
$b$ respectively, and  $\mathcal{M}_0$-definable linear
functions $f_1,\ldots,f_n$  defined on $B_{a}\times
B_{b}$ such that for every $(x,y) \in
B_{a}\times B_{b}$, we have that
\[xy=
\left(f_{1}(x,y),\ldots,f_n(x,y)\right).\]
Moreover, there are matrices $M,N \in \mathbb{Q}_{n\times n}$,
tuples $c,d \in \mathcal{M}^{n^{2}}$ and a vector
$\gamma \in \mathcal{M}_{0}^{n}$, such that for all $ (x,y) \in B_{a} \times B_{b}$
\[
xy= M_{c} x+ N_{d} y+ \gamma.
\]\end{lemma}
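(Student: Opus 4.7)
The plan is to apply the Cell Decomposition Theorem to the group operation $\mu\colon G\times G\to G$, $(x,y)\mapsto xy$. First I observe that since $C_1$ and $C_2$ are cells of dimension $d$ in $\mathcal{M}^n$, their product $C_1\times C_2$ is itself a cell in $\mathcal{M}^{2n}$ of type $(\bar{\imath}_1,\bar{\imath}_2)$ and dimension $2d$. Applying Fact~\ref{celldecomposition}(2) successively to each of the $n$ coordinate functions of $\mu$ restricted to $C_1\times C_2$ yields an $\mathcal{M}_0$-definable partition of $C_1\times C_2$ into cells on each of which every coordinate of $\mu$ is $\mathcal{M}_0$-linear. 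Let $E$ be the piece of this partition containing $(a,b)$; by the dim-genericity hypothesis, $\dim E=2d$.

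The next step is a rigidity observation: whenever $E\subseteq F$ are cells of the same dimension, they have the same $\bar{\imath}$-type. This follows by a short induction on the number of coordinates, since otherwise some position would simultaneously be a $0$-coordinate of $F$ (forced to be a fixed linear function of the previous ones) and a $1$-coordinate of $E$ (varying over an infinite $1$-cell), a contradiction. Applied to $E\subseteq C_1\times C_2$, this shows that $E$ is also a $(\bar{\imath}_1,\bar{\imath}_2)$-cell, so the projection $\pi$ onto the $2d$ ``$1$''-coordinates restricts to a bijection of $E$ onto an open cell $\pi(E)\subseteq \pi(C_1)\times\pi(C_2)\subseteq \mathcal{M}^{2d}$; moreover $\pi$ is injective on the larger set $C_1\times C_2$ and agrees with the inverse of $\pi\upharpoonright_E$ on $\pi(E)$.

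Now $\pi(a,b)=(\pi(a),\pi(b))$ is a dim-generic point of the open cell $\pi(E)$, so by Lemma~\ref{caja} there is an open $2d$-box around it contained in $\pi(E)$. Any such box factors canonically as $B'_{(1)}\times B'_{(2)}$ with $B'_{(1)}$ a $d$-box around $\pi(a)$ and $B'_{(2)}$ a $d$-box around $\pi(b)$. Set $B_a:=\pi^{-1}(B'_{(1)})\cap C_1$ and $B_b:=\pi^{-1}(B'_{(2)})\cap C_2$; these are a $C_1$-box around $a$ and a $C_2$-box around $b$ by definition, and the injectivity of $\pi$ on $C_1\times C_2$ combined with $B'_{(1)}\times B'_{(2)}\subseteq \pi(E)$ forces
\[
B_a\times B_b=\pi^{-1}(B'_{(1)}\times B'_{(2)})\cap(C_1\times C_2)\subseteq E,
\]
since any $(x,y)\in C_1\times C_2$ with $\pi(x,y)\in\pi(E)$ must lie in $E$.

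Finally, $\mu$ is $\mathcal{M}_0$-linear on $E$, and hence on $B_a\times B_b$. Each coordinate $f_i(x,y)$ therefore takes the form $\sum_{j=1}^{2n} s^i_j(z_j-c^i_j)/k^i_j+\gamma_i$ in the variables $(x,y)$; splitting the sum into the first $n$ and last $n$ indices and invoking the matrix formalism of Remark~\ref{matriznot} yields the desired representation $xy=M_cx+N_dy+\gamma$, with $\gamma\in\mathrm{dcl}(\mathcal{M}_0)^n=\mathcal{M}_0^n$. The main obstacle I expect is the rigidity step matching the $\bar{\imath}$-type of $E$ with that of $C_1\times C_2$: this is precisely what lets the product structure of a $2d$-box in $\mathcal{M}^{2d}$ descend through $\pi$ to a genuine product of a $C_1$-box and a $C_2$-box, rather than just an arbitrary $E$-box inside $C_1\times C_2$.
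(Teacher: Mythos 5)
Your proof is correct and follows essentially the same route as the paper: decompose so that multiplication is $\mathcal M_0$-linear on a full-dimensional cell $E\subseteq C_1\times C_2$ through $(a,b)$, observe $E$ must be a $(\bar\imath_1,\bar\imath_2)$-cell, pull a box back through $\pi_{(\bar\imath_1,\bar\imath_2)}$, split it into a $C_1$-box times a $C_2$-box, and read off the matrix form via Remark~\ref{matriznot}. The only difference is that you spell out two points the paper leaves implicit --- the rigidity observation that a subcell of the same dimension inherits the $\bar\imath$-type, and the injectivity argument showing $B_a\times B_b\subseteq E$ --- which is a welcome sharpening rather than a different proof.
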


\begin{proof} The multiplication in $G$ is a definable function, so
for every $i \leq n$ the projection $\pi_{i} \circ \cdot$ on the
$i$-th coordinate is $\mathcal{M}_{0}$-definable.

By cell decomposition, we can find a partition $\mathcal{P}$ of
cells of $G\times G$ such that for every $i \leq n$ and $D' \in
\mathcal{P}$, $\pi_{i} \circ \cdot{\upharpoonright}_{D'}$ is an
$\mathcal{M}_{0}$-definable linear function.

Let $D \in \mathcal{P}$ be the cell containing
$(a,b)$ (so by definition it must be a
$({\bar{\imath}}_1\widehat{\ \ }{\bar{\imath}}_2)$-cell) and by dim-genericity of
$(a,b)$ there is some $({\bar{\imath}}_1\widehat{\ \
}{\bar{\imath}}_2)$-cell $D_0$ contained in both $C_1\times C_2$ and in $D$, and
containing $(a,b)$. By Corollary \ref{cajabi}, there is
a $D_0$-box $B$ around the point $(a,b)$ completely
contained in $D_0$. Let $k_{1}$ and $k_{2}$ be the number of $1$'s in ${\bar{\imath}}_1$ and ${\bar{\imath}}_2$ respectively.
Assume that $B= \pi_{({\bar{\imath}}_1\widehat{\ \
}{\bar{\imath}}_2)}^{-1}( J_{1}\times \dots \times J_{k_{1}}
\times J_{k_{1}+1} \times \dots \times J_{k_{1}+k_{2}})$, where each $J_i$ is an interval.
Define $B_{a}=\pi_{({\bar{\imath}}_1\widehat{\ \
}{\bar{\imath}}_2)}^{-1}(J_{1} \times
\dots \times J_{k_{1}})$ and $B_{b}=\pi_{({\bar{\imath}}_1\widehat{\ \
}{\bar{\imath}}_2)}^{-1}( J_{k_{1}+1} \times \dots \times
J_{k_{1}+k_{2}})$. These are $C_1$ and $C_2$-boxes around $a$ and
$b$ respectively, and they satisfy the desired condition.

The ``moreover'' part follows by taking the matrix representation
of $g= (f_{1},\dots,f_{n})$ (taking for $M$ and $c$ the part
of the function involving $x$, and for $N$ and $d$ the
part involving $y$). \end{proof}

\begin{lemma} \label{celdassumaadentro} Let $a$, $b
\in G$ be two dim-generic and independent points belonging to
${\bar{\imath}}_1$ and ${\bar{\imath}}_2$-cells $C_1$ and $C_2$. Then there are
$C_1$ and $C_2$-boxes $B_{a}$ and $ B_{b}$
around $a$ and $b$, respectively, completely contained
in $G$, and matrices $M,N \in \mathbb{Q}_{n \times n}$, tuples
$c,d \in \mathbb{Z}^{n^2}$ and vectors $\gamma_{1},
\gamma_{2} \in dcl(\mathcal{M}_{0},a^{-1} b)^{n}$ such
that the following hold:

\bitem \item For all $x \in B_{a}$, we have
$x  a^{-1}  b=
M_{c} x+\gamma_{1} \in B_{b}$.

\item For every $x \in B_{a}$, we have $
a^{-1}  b  x
=N_{d}x+\gamma_{2} \in B_{b}$.

\eitem
\end{lemma}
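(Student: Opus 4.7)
The plan is to derive this statement from Lemma~\ref{celdassuma} by applying it twice, once to the pair $(a,a^{-1}b)$ and once to $(a^{-1}b,a)$, and then specializing one argument in each application. The first application, with the second argument set to $a^{-1}b$, yields the formula $xa^{-1}b = M_{c}x+\gamma_1$; the second, with the first argument set to $a^{-1}b$, yields $a^{-1}bx = N_{d}x+\gamma_2$. The box $B_a$ is obtained as an intersection of the two $C_1$-boxes around $a$ produced by the two applications, and $B_b$ is chosen as a single $C_2$-box around $b$ that absorbs the images of both maps.

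The first preliminary step is verifying that the pairs $(a,a^{-1}b)$ and $(a^{-1}b,a)$ are dim-generic and independent over $\mathcal{M}_0$. Left translation by $a^{-1}$ is a definable bijection of $G$ over $\mathcal{M}_0 a$, so
\[
\dim(a^{-1}b/\mathcal{M}_0 a) = \dim(b/\mathcal{M}_0 a) = \dim(b/\mathcal{M}_0) = d,
\]
where the last equality uses the independence of $a$ and $b$. Hence $\dim(a,a^{-1}b/\mathcal{M}_0)=2d$, the pair is dim-generic and independent, and the same conclusion holds by symmetry for $(a^{-1}b,a)$. Let $C_2'$ be the $\bar{\imath}_2'$-cell of the fixed decomposition that contains $a^{-1}b$.

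Applying Lemma~\ref{celdassuma} to $(a,a^{-1}b)$ with the cells $(C_1,C_2')$ produces a $C_1$-box $B_a^{(1)}$ around $a$, a $C_2'$-box $B_{a^{-1}b}^{(1)}$ around $a^{-1}b$, matrices $M,N$, tuples $c,d$ and a vector $\gamma\in\mathcal{M}_0^n$ such that $xy=M_{c}x+N_{d}y+\gamma$ on the product. Setting $y=a^{-1}b$ gives, for every $x\in B_a^{(1)}$,
\[
xa^{-1}b \;=\; M_{c}x+\gamma_1, \qquad \gamma_1 := N_{d}(a^{-1}b)+\gamma \in dcl(\mathcal{M}_0,a^{-1}b)^n.
\]
The symmetric application to $(a^{-1}b,a)$, with the first variable specialized to $a^{-1}b$, produces a $C_1$-box $B_a^{(2)}$ around $a$ and analogous data yielding $a^{-1}bx=N'_{d'}x+\gamma_2$ on $B_a^{(2)}$, with $\gamma_2\in dcl(\mathcal{M}_0,a^{-1}b)^n$. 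I would then set $B_a := B_a^{(1)} \cap B_a^{(2)}$; by Corollary~\ref{intersectionCboxes} this is again a $C_1$-box around $a$.

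The remaining task, and the main obstacle, is the choice of $B_b$. Writing $\phi(x):=xa^{-1}b$ and $\psi(x):=a^{-1}bx$, both are now affine linear on $B_a$, with $\phi(a)=b$ and $\psi(a)=a^{-1}ba$. Since $\phi$ is affine linear with $\phi(a)=b$, shrinking $B_a$ (while respecting the congruence conditions defining $C_1$) forces $\phi(B_a)$ to lie inside any prescribed $C_2$-box around $b$. The subtle point is that $\psi(a)=a^{-1}ba$ need not equal $b$, so $\psi(B_a)$ clusters around a possibly different point; to make $\psi(B_a)\subseteq B_b$ for a $C_2$-box around $b$ one must show that $a^{-1}ba$ lies in the same cell $C_2$ as $b$, so that $B_b$ can be stretched to contain both $b$ and $a^{-1}ba$. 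Here one exploits the dim-genericity of $a$ over $\mathcal{M}_0 b$ and the $\mathcal{M}_0$-definability of the decomposition: the cell containing $a^{-1}ba$ is determined by a definable function of $a$ over $\mathcal{M}_0 b$, and by genericity together with the exchange property this cell is forced to coincide with $C_2$. Once $a^{-1}ba\in C_2$ is in hand, take $B_b$ to be a $C_2$-box around $b$ wide enough to contain both $\phi(B_a)$ and $\psi(B_a)$, which completes the construction.
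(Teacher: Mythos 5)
Your first half matches the paper's argument: the paper also applies Lemma~\ref{celdassuma} to the pair $(a,a^{-1}b)$ (and implicitly, via a terse ``Likewise'', to the pair with $a^{-1}b$ in the first slot), specializes one argument to $a^{-1}b$, and intersects the resulting $C_1$-boxes around $a$. For the first map $\phi(x)=xa^{-1}b$ the paper then considers $D_b=\{xa^{-1}b\mid x\in B_a^0\}$, observes that $b=\phi(a)\in D_b$ and that $b$ is dim-generic over the defining parameters $\mathcal{M}_0\cup\{a^{-1}b\}$, extracts a $C_2$-box $B'_b$ around $b$ inside $D_b$, and shrinks $B_a$ accordingly; you do essentially the same thing. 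You have also correctly spotted the delicate point that the paper's ``Likewise'' does not elaborate: $\psi(x)=a^{-1}bx$ satisfies $\psi(a)=a^{-1}ba$, which, since abelianity is precisely what is being established, need not be $b$, so $b$ is not obviously in the image $\psi(B_a)$.

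Your proposed repair of that gap, however, does not hold up. You assert that ``the cell containing $a^{-1}ba$ is determined by a definable function of $a$ over $\mathcal{M}_0 b$, and by genericity together with the exchange property this cell is forced to coincide with $C_2$.'' This is not a valid inference. The fixed $\mathcal{M}_0$-definable cell decomposition of $G$ generally has several cells of full dimension, and a dim-generic point can lie in any one of them; dim-genericity of $a^{-1}ba$ over $\mathcal{M}_0$ (which does hold) says nothing about \emph{which} cell it lies in. Nothing in the exchange property forces the conjugate $a^{-1}ba$ to land in $C_2$ rather than some other full-dimensional cell. Moreover, even granting $a^{-1}ba\in C_2$, the final step -- take $B_b$ ``wide enough to contain both $\phi(B_a)$ and $\psi(B_a)$'' -- still requires that there exist a \emph{single} $C_2$-box around $b$ whose interior (in the infinite-margin sense of the paper's box definition) also contains $a^{-1}ba$ together with a neighborhood of it; you have not argued that $b$ and $a^{-1}ba$ are comparable in the relevant cuts. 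So the place where you close the gap is itself a gap, and your proof as written is not complete.
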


\begin{proof} Since $a$ and $b$ are independent and
dim-generics of $G$, then $a$ and $a^{-1}
 b$ are independent and dim-generics of $G$ as
well. Therefore, if $a^{-1}  b$ lives
in a ${\bar{\imath}}_3$-cell $C_3$,  by Lemma \ref{celdassuma} there are
$C_1$ and $C_3$-boxes $B^{0}_{a}$ and
$B_{a}^{-1}  b$, matrices $M, N \in
\mathbb{Q}_{n\times n}$, tuples $c, d \in
\mathbb{Z}^{n^{2}}$ and an element $\gamma \in
\mathcal{M}_{0}^{n}$, such that for all $x \in
B^{0}_{a}$ and $y \in B_{a}^{-1}
 b$ we have
\begin{align*}
x  y= M_{c} x+N_{d} y+\gamma.
\end{align*}

This implies that for any element $x \in B^{0}_{a}$, $x  a^{-1}
 b= M_{c}
x+N_{d}(a^{-1}  b)+\gamma$. Defining
$\gamma_{1}= N_{d}(a^{-1}
b)+\gamma$, for each element $x \in
B^{0}_{a}$ we get
\[
x  a^{-1}  b=M_{c} x+\gamma_{1}.
\]

Consider now $D_{b}=\{ x a^{-1}  b \ | \ x \in B^{0}_{a}\}$, which
is an $\mathcal{M}_{0} \cup \{ a^{-1}  b\}$-definable set. Let
$B'_{b}$ be a $C_2$-box such that $b \in B'_{b} \subseteq D_{b}$.
Similarly, define $D_{a}= \{ x \in B'_{a} \ | \ x a^{-1}  b \in
B'_{b}\}$, which is an $\mathcal{M}_{0} \cup \{ a^{-1}
b\}$-definable set and $a \in D_{a}$. Again we can find a
$C_1$-box $B_{a}'$ around $a$ such that $B_{a}' \subseteq D_{a}$.
These boxes satisfy the first condition of the lemma. Likewise we
can obtain $C_1$ and $C_2$-boxes $B^{''}_{a}, B^{''}_{b}$ which
satisfy the second condition. Take $ B_{a}= B'_{a} \cap
B^{''}_{a}$ and $B_{b}= B'_{b} \cap B^{''}_{b}$.
\end{proof}

\begin{lemma} \label{vecindadcool} Let $a$ be a dim-generic
element in a ${\bar{\imath}}$-cell $C$. Then there is a $C$-box
$B_{a}$ around $a$, matrices $M,N,P,Q \in
\mathbb{Q}_{n\times n}$, tuples $c,d,e,f \in
\mathbb{Z}^{n^{2}}$ and vectors $\gamma_{1},\gamma_{2},
\beta \in \mathcal{M}^{n}$, such that
\[ x  a^{-1}  y= P_{e}( M_{c} x+\gamma_{1})+ Q_{f} (N_{d} y+\gamma_{2})+ \beta.
\]for all $x,y \in B_{a}$.\end{lemma}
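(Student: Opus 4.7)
The plan is to introduce an auxiliary dim-generic point $b\in G$, factor
\begin{equation*}
x a^{-1} y = (xa^{-1}b)(b^{-1}y)
\end{equation*}
as a product of two expressions each depending linearly on only one of $x,y$, and then to apply Lemma~\ref{celdassuma} once more to turn the remaining group product into a single linear expression. This mirrors the strategy used in \cite{Pantelis} in the o-minimal setting.

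First I would use saturation of $\mathcal{M}$ to pick $b$ dim-generic over $\mathcal{M}_{0}\cup\{a\}$, so that $a,b$ are independent dim-generic points of $G$. Routine $\dim$-rank computations (using that $b=(ab^{-1})^{-1}a$ and $a=b\cdot (b^{-1}a)$) show that $(a,ab^{-1})$ and $(b,b^{-1}a)$ are also pairs of independent dim-generic points of $G$. I would then apply Lemma~\ref{celdassumaadentro} to the pair $(a,b)$ to obtain a $C$-box $B^{(1)}_{a}$ and data $M,c,\gamma_{1}$ with
\begin{equation*}
xa^{-1}b=M_{c}x+\gamma_{1}\qquad\text{for every }x\in B^{(1)}_{a},
\end{equation*}
landing in a fixed box around $b$. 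Applying Lemma~\ref{celdassumaadentro} to the pair $(a,ab^{-1})$ and extracting the \emph{left-multiplication} formula (noting that $a^{-1}(ab^{-1})=b^{-1}$) yields a second $C$-box $B^{(2)}_{a}$ and data $N,d,\gamma_{2}$ with
\begin{equation*}
b^{-1}y=N_{d}y+\gamma_{2}\qquad\text{for every }y\in B^{(2)}_{a},
\end{equation*}
taking values in a fixed box around $b^{-1}a$. Finally, Lemma~\ref{celdassuma} applied to the independent generic pair $(b,b^{-1}a)$ produces matrices $P,Q$, tuples $e,f$, and a vector $\beta$ with $uv=P_{e}u+Q_{f}v+\beta$ on suitable boxes around $b$ and $b^{-1}a$.

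The main obstacle is synchronizing all of these boxes so that the images of the first two maps land precisely where the product formula is valid. I would first use Corollary~\ref{intersectionCboxes} to intersect $B^{(1)}_{a}$ and $B^{(2)}_{a}$ into a single $C$-box around $a$, and then shrink further to a $C$-box $B_{a}$ on which both $x\mapsto xa^{-1}b$ and $y\mapsto b^{-1}y$ take values in the prescribed boxes around $b$ and $b^{-1}a$ respectively; this is permitted because each preimage under a definable function of an open box around a dim-generic value must contain a $C$-box around $a$, again by Corollary~\ref{intersectionCboxes}. Once the boxes are aligned, substituting the first two linear formulas into the product formula gives
\begin{equation*}
x a^{-1} y = (xa^{-1}b)(b^{-1}y) = P_{e}(M_{c}x+\gamma_{1})+Q_{f}(N_{d}y+\gamma_{2})+\beta
\end{equation*}
for all $x,y\in B_{a}$, which is the claimed form.
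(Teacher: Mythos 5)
Your proof is correct and follows essentially the same route as the paper's: introduce an auxiliary dim-generic point $b$ (the paper's $a_1$, with $a_2=a a_1^{-1}$), split $xa^{-1}y=(xa^{-1}b)(b^{-1}y)$ into two one-variable linear maps via Lemma~\ref{celdassumaadentro}, recombine via Lemma~\ref{celdassuma}, and then synchronize the boxes by intersecting and shrinking exactly as you describe. The only deviation is cosmetic but worth noting: you apply Lemma~\ref{celdassuma} to the pair $(b,b^{-1}a)$, which is where $b^{-1}y$ actually lands for $y$ near $a$, while the paper applies it to $(a_1,a_2)=(b,ab^{-1})$ in line with the literal wording of the second bullet of Lemma~\ref{celdassumaadentro}; your reading is the geometrically accurate one (those two points coincide only under the local abelianity one is in the process of establishing), but the overall structure of the argument is identical.
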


\begin{proof} Let $a$ be a dim-generic element of $G$ and take
$a_{1}$ a dim-generic element of $G$ independent of
$a$. Consider $a_{2}=a
a_{1}^{-1}$, which is also dim-generic and independent
of $a$. Assume that $a_1$ lives in a
${\bar{\imath}}_1$-cell $C_1$ (in the cell decomposition of $G$) and that
$a_2$ lives in a ${\bar{\imath}}_2$-cell $C_2$.

Applying  Lemma \ref{celdassumaadentro} to $a$ and $a_{1}$ we
obtain $C$ and $C_1$-boxes $B'_{a}$ and $B'_{a_{1}}$, a matrix $M
\in \mathbb{Q}_{n \times n}$, a tuple $c \in \mathbb{Z}^{n^{2}}$
and a vector $\gamma_{1} \in \mathcal{M}^{n}$ such that for all $x
\in B'_{a}$ we have
\begin{align*}
x  a^{-1}  a_{1}&= M_{c}x+\gamma_{1} \in B'_{a_{1}}.
\end{align*}

Similarly, since $a$ and $a_{2}$ are also
independent and dim-generics, there are $C$ and $C_2$-boxes
$B^{''}_{a}$ and $B^{''}_{a_{2}}$, a matrix
$N \in \mathbb{Q}_{n\times n}$, a tuple $d \in
\mathbb{Z}^{n^{2}}$ and a vector $\gamma_{2} \in
\mathcal{M}^{n}$ such that for all $x \in
B^{''}_{a}$
\[
a^{-1}  a_{2}  x =
N_{d} {y}+\gamma_{2} \in B^{''}_{a_{2}}.
\]

Additionally, applying Lemma \ref{celdassuma} to
$a_{1}$ and $a_{2}$, which are also
independent and dim-generics,  there are $C_1$ and $C_2$-boxes
$B{'''}_{a_{1}}$ and $B^{'''}_{a_{2}}$,
matrices $P,Q \in \mathbb{Q}_{n\times n}$, tuples $e,f
\in \mathbb{Z}^{n^{2}}$ and a vector $\beta \in \mathcal{M}^{n}$
such that for any element  $x \in
B^{'''}_{a_{1}}$ and $y \in
B^{'''}_{a_{2}}$ we have that $x
y= P_{e}x+ Q_{f}y+ \beta$.

Now we can consider the ($C$, $C_1$ and $C_2$)-boxes
$B_a=B'_{a}\cap B^{''}_{a}$, $B_{a_{1}}=B^{'''}_{a_{1}} \cap
B'_{a_{1}}$ and $B_{a_{2}}=B^{'''}_{a_{2}} \cap B^{''}_{a_{2}}$.
Without loss of generality, we may assume that $x a^{-1} a_{1} \in
B_{a_{1}}$ and $a^{-1}
 a_{2}  x \in B_{a_{2}}$ for any element
$x \in B_{a}$ (we can always find a smaller box around
$a$ that satisfies these conditions following the same
argument of the last part of Lemma \ref{celdassumaadentro}).

Thus, for any $x,y \in B_{a}= B_{a}'\cap B_{a}^{''}$, we have
\begin{align*}
x   a^{-1}  y =
x   a^{-1}  a_{1}
a_{1}^{-1}  y & = \left( x
 a^{-1}  a_{1}\right)  \left(
a_{1}^{-1}  y\right)\\ & =\underbrace{
\left( x  a^{-1}
a_{1}\right)}_{\in B_{a_{1}}}
\underbrace{\left(  a^{-1}  a_{2}
y\right)}_{\in B_{a_{2}}}.
\end{align*}
So \begin{align*} \left(x  a^{-1}
y\right)&= (M_{c}x+\gamma_{1})
(N_{d} y+\gamma_{2})=
P_{e}(M_{c}x+\gamma_{1})+
Q_{f}(N_{d}y+\gamma_{2})+ \beta,
\end{align*} as required. \end{proof}

\begin{lemma}\label{SUPERvecindad} Let $a$ be a dim-generic
element of $G$ and $C$ a cell containing it, as in the previous
lemma. Then there is a $C$-box $B_{a}$ around
$a$, such that for every $ x, y \in
B_{a}$, $x  a^{-1}
y= x - a +y$. \end{lemma}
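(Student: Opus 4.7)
The plan is to leverage Lemma \ref{vecindadcool} together with the fact that $a$ is the two-sided identity for the operation $x \ota y := x a^{-1} y$, and then exploit this to pin down the affine expression obtained in that lemma.

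First, I would start with the $C$-box $B_{a}$ furnished by Lemma \ref{vecindadcool}, on which
\begin{equation*}
x \ota y = \Phi(x) + \Psi(y) + \beta, \qquad \text{where } \Phi(x) := P_{e}(M_{c} x + \gamma_{1}),\ \Psi(y) := Q_{f}(N_{d} y + \gamma_{2}).
\end{equation*}
Both $\Phi$ and $\Psi$ are $B_{a}$-linear expressions, hence defined on all of $B_{a}$. Note that $a \in B_{a}$ by construction, so the formula above applies to any pair in $\{a\} \cup B_{a}$.

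Next, I would substitute special values of $x$ and $y$ and use the group-theoretic identities $a \ota y = y$, $x \ota a = x$, and $a \ota a = a$. Plugging $y = a$ into the formula yields $\Phi(x) = x - \Psi(a) - \beta$ for all $x \in B_{a}$; plugging $x = a$ yields $\Psi(y) = y - \Phi(a) - \beta$ for all $y \in B_{a}$; plugging $x = y = a$ yields $\Phi(a) + \Psi(a) + \beta = a$. Substituting the first two back into the expression for $x \ota y$ and using the third identity gives, for all $x, y \in B_{a}$,
\begin{equation*}
x \ota y = \bigl(x - \Psi(a) - \beta\bigr) + \bigl(y - \Phi(a) - \beta\bigr) + \beta = x + y - \bigl(\Phi(a) + \Psi(a) + \beta\bigr) = x + y - a,
\end{equation*}
which is precisely $x \opa y$.

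The only potential obstacle is verifying that the evaluations $\Phi(a)$ and $\Psi(a)$ make sense, i.e., that $a$ satisfies the congruence conditions implicit in the $M_{c}$- and $N_{d}$-parts of these expressions. But this is automatic: Lemma \ref{vecindadcool} asserts that the displayed formula holds for every $x, y \in B_{a}$, and $a$ itself lies in $B_{a}$, so $\Phi$ and $\Psi$ are well-defined at $a$ and the specializations performed above are legitimate. Everything else is formal algebra in the ambient Presburger structure.
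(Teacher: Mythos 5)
Your proof is correct and takes essentially the same approach as the paper: both arguments start from the affine expression furnished by Lemma \ref{vecindadcool}, exploit that $a$ is the two-sided $\ota$-identity by substituting $x=a$, $y=a$, and $x=y=a$, and then rearrange. You solve for $\Phi(x)$ and $\Psi(y)$ separately and substitute back, while the paper adds the two specialized identities and regroups to land on $x+y = (x\ota y) + a$; these are the same algebra presented in a slightly different order.
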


\begin{proof}
By Lemma  \ref{vecindadcool} there is a $C$-box $B_{a}$
around $a$, matrices $M,N,P,Q \in \mathbb{Q}_{n\times n}$,
tuples $c,d,e,f \in \mathbb{Z}^{n^{2}}$ and
$\gamma_{1},\gamma_{2},\beta \in \mathcal{M}^{n}$
such that for every $x,y \in
B_{a}$,

\[x  a^{-1}  y= P_{e}(M_{c}x+\gamma_{1}+
Q_{f}(N_{d} y+\gamma_{2})+ \beta.\]

In particular $a \in B_{a}$, so we have
\[
x=x  a^{-1}
a=  P_{e}(M_{c}x+\gamma_{1})+
Q_{f}(N_{d} a+\gamma_{2})+ \beta.
\]

Similarly,
\[y=a  a^{-1}
y= P_{e}(M_{c}a+\gamma_{1})+
Q_{f}(N_{d} y+\gamma_{2})+ \beta, \]
so
\begin{align*}
x+y&=  P_{e}(M_{c}x+\gamma_{1})+ Q_{f}(N_{d} a+\gamma_{2})+ \beta+ P_{e}(M_{c}a+\gamma_{1})+ Q_{f}(N_{d} y+\gamma_{2})+ \beta\\
&=\underbrace{ \left(P_{e}(M_{c}x+\gamma_{1})+ Q_{f}(N_{d} y+\gamma_{2})+ \beta \right)}_{x  a^{-1}  y}+\underbrace{\left(P_{e}(M_{c}a+\gamma_{1})+ Q_{f}(N_{d} a+\gamma_{2})+ \beta \right)}_{a a^{-1}  a}.
\end{align*}

Since $ a=a a^{-1}  a$, we have
$x+y= x a^{-1} y+a$, so we can
conclude $x-a+y= x a^{-1} y$.
\end{proof}

\begin{theorem} \label{ab-by-finite} Every group
$(G,\cdot,e_G)$ definable in Presburger arithmetic is definably abelian-by-finite. This is, there is
a definable abelian subgroup $G'$ of $G$ of finite index.\end{theorem}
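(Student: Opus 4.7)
The plan is to use Lemma \ref{SUPERvecindad} to extract an abelian subset of $G$ of full dimension, and then pass to a double-centralizer to obtain a definable abelian subgroup of finite index. The finite case is trivial (take the trivial subgroup), so I assume $\dim G = d \geq 1$.

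First, I pick a dim-generic $a \in G$, lying in a cell $C$ of the cell decomposition of $G$. Because $\dim(a/\mathcal{M}_0) = d$ and $C$ is $\mathcal{M}_0$-definable with $C \subseteq G$, we must have $\dim C = d$. Applying Lemma \ref{SUPERvecindad}, I obtain a $C$-box $B_a$ around $a$ such that
\[
x \cdot a^{-1} \cdot y \;=\; x - a + y \qquad \text{for all } x,y \in B_a.
\]
Since addition in $\M^n$ is commutative, the right-hand side is symmetric in $x$ and $y$, so $xa^{-1}y = ya^{-1}x$ for all $x,y \in B_a$. Substituting $x = ua$, $y = va$ and simplifying gives $uv = vu$ for all $u,v \in U := B_a a^{-1}$, so $U$ is a pairwise-commuting subset of $G$. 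Because right multiplication by $a^{-1}$ is a definable bijection of $G$, $\dim U = \dim B_a = \dim C = d$.

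Now I take the definable centralizer $H := C_G(U) = \{g \in G : gu = ug \text{ for all } u \in U\}$. Since elements of $U$ commute pairwise, $U \subseteq H$, so $\dim H \geq d = \dim G$, and Remark \ref{dimfiniteindex} yields that $H$ has finite index in $G$. Let $G' := Z(H)$ be the center of $H$; this is a definable abelian subgroup of $H$. For each $u \in U$ and each $h \in H$, the defining property of $H$ gives $hu = uh$, so $u \in Z(H) = G'$. Therefore $U \subseteq G'$, hence $\dim G' \geq d$, and by Remark \ref{dimfiniteindex} once more, $G'$ has finite index in $H$ and so in $G$. Since $G'$ is the center of a group, it is abelian, which finishes the proof.

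The entire substantive work is really absorbed into the local additivity statement of Lemma \ref{SUPERvecindad}: once we know that near a generic point the group operation agrees with $\M^n$-addition, the commutativity of $+$ immediately supplies a full-dimensional abelian set, and the rest is elementary group theory combined with the dimension-index correspondence. I foresee no serious obstacle beyond the bookkeeping verifications that $\dim U = d$ and $U \subseteq Z(C_G(U))$.
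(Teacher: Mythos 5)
Your proof is correct and follows essentially the same route as the paper: both extract a pairwise-commuting full-dimensional set from Lemma~\ref{SUPERvecindad} and then apply the double-centralizer trick together with the dimension-index correspondence (Remark~\ref{dimfiniteindex}). The only cosmetic difference is that you translate the set ($U = B_a a^{-1}$) and work with $Z(C_G(U))$, whereas the paper translates the operation (using $\oplus_a$) and takes $C(C(B_a))$ directly — these are the same group, and your version is if anything slightly more explicit about why $U$ lands inside the center of the centralizer.
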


\begin{proof} By Lemma \ref{SUPERvecindad} we have neighborhood
of the identity such that $\ota$ commutes between any two elements
of the neighborhood. The proof will follow the standard trick of
using a double centralizer.

Let $(G, \cdot, e_{G})$ be a definable group and take a
dim-generic element $a \in G$ in a cell $C$. By Lemma
\ref{SUPERvecindad} there is $C$-box $B_{{a}}$ such that
for every $ {x},{y} \in B_{{a}}$,
${x}  {a}^{-1}  {y}=
{x}-{a}+{y}$.

Given ${x},{y} \in G$, define ${x}
\oplus_{a} {y}= {x}   {a}^{-1}
{y}$, notice that $\oplus_{a}$ is a group operation whose
identity is ${a}$, and $(G,\cdot, e)$ is definably
isomorphic to  $(G,\oplus_{a},{a})$, via the function
$f({x})= {x}  {a}$. It is
therefore enough to show that $(G,\oplus_{a},{a})$ is
abelian-by-finite.

Recall that the centralizer $C(X)$ of a set $X$ is $\{y \in G \mid
\forall (x\in X), \ y\oplus_{a} x=x\oplus_{a} y\}$. Because $\ota$
is commutative in $B_a$, we know that $H=C(C(B_{a}))$ contains
$B_a$ and is therefore a commutative subgroup of $G$. Hence $H$
has the same dimension as $(G,\oplus_{a},{a})$ so it has finite
index by the properties of dimension. \end{proof}

Every abelian-by-finite group is \emph{amenable}, meaning it admits a finitely additive probability
measure on
sets which is invariant under left multiplication. By Theorem \ref{ab-by-finite}
any group $G$ definable in Presburger arithmetic admits such a measure $\mu_G$ on the
algebra of subsets of $G$, so in particular it is a finitely additive probability measure on
definable sets.  So we get the following.

\begin{corollary} \label{groupdefamenable} Every group $G$ definable in Presburger arithmetic is
amenable. In particular, also definably amenable.  \end{corollary}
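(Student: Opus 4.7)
The plan is to reduce to the abelian case via Theorem \ref{ab-by-finite} and then invoke the classical fact that abelian groups are amenable, and finally push the resulting measure up through the finite extension.

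First, apply Theorem \ref{ab-by-finite} to obtain a definable abelian subgroup $G' \leq G$ of finite index, say with coset representatives $g_1,\ldots,g_k$. It is a classical theorem (provable for instance by the Markov--Kakutani fixed point theorem applied to the commuting family of translation operators on the space of finitely additive probability measures on $G'$, or equivalently by a Hahn--Banach argument) that every abelian group admits a left-invariant finitely additive probability measure defined on its full power set. Let $\mu'$ be such a measure on $G'$.

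Next, lift $\mu'$ to all of $G$ by the formula
\[
\mu_G(X) \;=\; \frac{1}{k}\sum_{i=1}^{k} \mu'\bigl(g_i^{-1} X \cap G'\bigr),
\]
defined for every subset $X \subseteq G$. A routine check shows that $\mu_G$ is a finitely additive probability measure on $\mathcal{P}(G)$, that it does not depend on the choice of coset representatives (since $\mu'$ is $G'$-invariant), and that it is invariant under left multiplication by any element of $G$ (any $h \in G$ permutes the cosets of $G'$, so $h\cdot X$ decomposes into coset pieces whose $\mu'$-masses are permuted accordingly). Thus $G$ is amenable.

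Finally, since $\mu_G$ is defined on all subsets of $G$, its restriction to the Boolean algebra of definable subsets of $G$ is a left-invariant finitely additive probability measure on definable sets, witnessing that $G$ is also definably amenable. The argument involves no real obstacle: the only nontrivial ingredient is the amenability of abelian groups, which is standard, and the passage from $G'$ to $G$ is a direct cocycle-free averaging.
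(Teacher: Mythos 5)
Your proof is correct and follows essentially the same route as the paper: the paper simply invokes, without proof, the standard fact that every abelian-by-finite group is amenable, and combines it with Theorem \ref{ab-by-finite}, whereas you spell out the two ingredients of that fact (amenability of abelian groups via Markov--Kakutani, then the $\frac{1}{k}\sum_i \mu'\bigl(g_i^{-1}X\cap G'\bigr)$ averaging lift through the finite-index subgroup). Your averaging formula and the verification of left-invariance by permuting cosets are both sound, so this is a fleshed-out version of the same argument.
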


Given any group $(G,\cdot, e)$ definable in Presburger arithmetic, we will denote by $\mu_G$ the invariant measure given by the previous corollary.

\section{ Generic definable subsets of bounded groups}\label{sec3}

As mentioned in the introduction, we want to show that any bounded
definable group (see Definition \ref{Bounded Set}) is a definable
quotient of $\M^n$ by a lattice. In order to prove this result we
will use results in \cite{HrPi} and \cite{ChSi} to characterize
definable generic subsets of bounded groups. In this section we
prove that if $G$ is a definable bounded group in Presburger, then
a definable subset is generic if and only if it has positive
measure with respect to $\mu_{G}$.

We define a bounded set $X(a)=\phi(\mathcal{M},a)$ as follows,
\begin{definition} [Bounded set]\label{Bounded Set} Let $X(a)=\phi(M,a) \subseteq \mathcal{M}^{n}$ be a definable set,
we say that $X(a)$ is bounded if there is some element $0 < \alpha
\in \mathcal{M}$ such that for any element ${x} \in X(a)$, we have
that for all $i \leq k$, $-\alpha < x_{i}< \alpha$, where $x_{i}$
indicates the $i$-th coordinate of ${x}$.
\end{definition}

\medskip

The characterization of generic sets is inspired by methods  found
in \cite{acotado} and \cite{Dolich}, which contain similar results
for o-minimal theories. In particular, we will begin by proving an
analog of Theorem $2.1$ in \cite{acotado} (see Theorem
\ref{PIFpunto}).

%We start with the following lemma, that establishes a necessary condition for a set to be a generic subset of an arbitrary definable group $G$ in Presburger, and more generally, in any group with a translation-invariant dimension theory.
%
%\bl Let $(G,\cdot, e_{G})$ be a definable group in Presburger and assume that $X $ is a definable subset of $G$. If $X$ is left (resp. right) generic, then $dim(X)=dim(G)$.
%\el
%\begin{proof}
%Let $G$ be a definable group and assume that $X \subseteq G$ is left generic, then there are elements $g_{1},\dots, g_{k} \in G$ such that $\displaystyle{G= \bigcup_{i=1}^{k} g_{i} \cdot X}$, then:
%\begin{align*}
% \dim(G)= \max\{ dim\left( g_{i} \cdot X\right) \ | \ 1 \leq i \leq k\}.
% \end{align*}
% However, observe that the definable function $f_{i}: X \rightarrow g_{i} \cdot X$, defined as $f(x)=g_{i} \cdot x$ is bijective. Thus, for each $i \leq k$ and we have $\dim(g_{i}\cdot X)= \dim(X)$.
%\end{proof}

\subsection{Definable functions between two non-algebraic types}\label{sub3.1}

In the following statements $p(x_{1},\dots,x_{n})$  is a complete
$n$-type over $\mathcal{M}_{0}$ of dimension $n$. Let $q(y)$
denote a complete non-algebraic $1$-type over $\mathcal{M}_{0}$,
and let $\mathcal{P}, \mathcal{Q}$ be their sets of realizations
in $\M^n$ and $\M$ respectively. By quantifier elimination, $q(y)$
is determined by an $\mathcal{M}_{0}$-cut and by a set of formulas
of the form  $x \equiv_{n} c$ with $0\leq c < n$. We will indicate
as $\mathcal{Q}'$ the set of points that satisfy the same
$\mathcal{M}_{0}$-cut implied by the type $q(y)$, i.e., the
realizations of the type $q(y)\in S_{1}(\mathcal{M}_{0})$
restricted to the language $\mathcal{L}'=\{<\}$.

%\pte{$\downarrow$ When we say that $\alpha:\mathcal{P}\to\mathcal{Q}$ is a definable function, this implies that both $\mathcal{P}$ and $\mathcal{Q}$ are definable sets. I think you are referring to a function whose domain and codomain are type-definable, but is defined by a formula. This kind of sets are called \emph{relatively definable} $\downarrow$}
\begin{lemma} \label{lemasobre} Let $\mathcal{P}$ and $\mathcal{Q}$ be as above. Let ${a} \in \mathcal{P}$, and $\alpha: \mathcal{P} \rightarrow \mathcal{M}$ a relatively $\mathcal{M}_{0}$-definable function, such that $\alpha({a}) \in \mathcal{Q}$. Then $\alpha$ is surjective onto $ \mathcal{Q}$.
\end{lemma}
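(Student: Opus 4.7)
The plan is a short invariance argument using saturation of $\mathcal{M}$, so I would avoid cell decomposition entirely.

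First I would observe that, because $\alpha$ is relatively $\mathcal{M}_0$-definable, for every automorphism $\sigma \in \operatorname{Aut}(\mathcal{M}/\mathcal{M}_0)$ one has $\sigma(\mathcal{P}) = \mathcal{P}$ and $\alpha(\sigma(a')) = \sigma(\alpha(a'))$ for every $a' \in \mathcal{P}$. Indeed, the graph of $\alpha$ is the intersection of $\mathcal{P} \times \mathcal{M}$ with some $\mathcal{M}_0$-definable set, and both $\mathcal{P}$ (being the realization set of a type over $\mathcal{M}_0$) and this definable set are invariant under $\operatorname{Aut}(\mathcal{M}/\mathcal{M}_0)$. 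A consequence already worth recording is $\alpha(\mathcal{P}) \subseteq \mathcal{Q}$, since $\alpha(a) \in \mathcal{Q}$ forces every Galois conjugate of $a$ over $\mathcal{M}_0$ to be sent by $\alpha$ into $\mathcal{Q}$.

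Next I would fix any $b \in \mathcal{Q}$ and prove it lies in the image. Since $\mathcal{Q}$ is the set of realizations of the complete type $q \in S_1(\mathcal{M}_0)$ and $\alpha(a) \in \mathcal{Q}$, we have $\tp(b/\mathcal{M}_0) = q = \tp(\alpha(a)/\mathcal{M}_0)$. By saturation of $\mathcal{M}$ over the small parameter set $\mathcal{M}_0$, there exists $\sigma \in \operatorname{Aut}(\mathcal{M}/\mathcal{M}_0)$ with $\sigma(\alpha(a)) = b$. Setting $a' := \sigma(a)$, the first observation yields $a' \in \mathcal{P}$ and
\[
\alpha(a') = \alpha(\sigma(a)) = \sigma(\alpha(a)) = b,
\]
which exhibits $b$ as a value of $\alpha$ on $\mathcal{P}$.

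I expect the only subtle point to be confirming the precise reading of \emph{relatively $\mathcal{M}_0$-definable}, namely that the graph of $\alpha$ is cut out from $\mathcal{P} \times \mathcal{M}$ by an $\mathcal{M}_0$-formula; once that is granted, the commutation of $\alpha$ with $\operatorname{Aut}(\mathcal{M}/\mathcal{M}_0)$ on $\mathcal{P}$ is immediate and the proof is essentially a one-liner. A longer alternative would be to apply Fact \ref{celldecomposition} to write $\alpha|_C$ on the unique cell $C$ containing $a$ as $\sum_i s_i(x_i - c_i)/k_i + \gamma$ and solve explicitly for preimages inside $\mathcal{P}$, but this is unnecessary overhead.
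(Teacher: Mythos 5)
Your proof is correct and follows essentially the same route as the paper: both arguments take an automorphism $\sigma \in \operatorname{Aut}(\mathcal{M}/\mathcal{M}_0)$ sending $\alpha(a)$ to an arbitrary $b \in \mathcal{Q}$ (using saturation), observe that $\sigma(a) \in \mathcal{P}$, and conclude $\alpha(\sigma(a)) = b$. Your write-up is a bit more explicit about why $\alpha$ commutes with $\operatorname{Aut}(\mathcal{M}/\mathcal{M}_0)$, but the idea is identical.
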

\begin{proof}
If $\beta \in \mathcal{Q}$ then $\tp(\beta/\mathcal{M}_{0})= \tp(\alpha({a})/\mathcal{M}_{0})$, so there is an automorphism $h \in Aut(\mathcal{M}/\mathcal{M}_{0})$ such that $h(\alpha({a}))=\beta$. Thus, $h({a}) \vDash p$ and $\alpha(h({a}))= \beta$, so $\alpha$ is surjective.
\end{proof}

\begin{lemma}\label{lemaconjugado} Let $\mathcal{P}$ be the set of
realizations of a type $p({x})\in S_n(\mathcal{M}_{0})$ of
dimension $n$ and let $\alpha: \mathcal{P} \rightarrow
\mathcal{M}$ be a relatively $\mathcal{M}_{0}$-definable function.
Take ${a}\in \mathcal{P}$ arbitrary, and suppose that $q(y)=
\tp(\alpha({a})/\mathcal{M}_{0})$ is a non-algebraic 1-type. Let
$\mathcal{Q}'$ be as described at the begining of this subsection.
Then there are elements ${a}^{1}, {a}^{2}\in \mathcal{P}$ such
that $\alpha({a}^{1})< dcl({a}\mathcal{M}_{0}) \cap
\mathcal{Q}'<\alpha({a}^{2})$. \end{lemma}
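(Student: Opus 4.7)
The plan is to build $a^1$ (the construction of $a^2$ is entirely symmetric) via a compactness-and-saturation argument combined with Lemma \ref{lemasobre}. I would first consider the partial type over $\mathcal{M}_0 a$
\[\Sigma^-(y) \;=\; q(y) \;\cup\; \{y < z : z \in dcl(a\mathcal{M}_0) \cap \mathcal{Q}'\}.\]
Once $\Sigma^-$ is shown to be consistent, saturation of $\mathcal{M}$ produces a realization $y^- \in \mathcal{Q}$, and Lemma \ref{lemasobre} applied to $\alpha$ and the original $a$ pulls $y^-$ back to some $a^1 \in \mathcal{P}$ with $\alpha(a^1) = y^-$, which is the desired element. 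The element $a^2$ will come out of the analogous partial type $\Sigma^+(y) = q(y) \cup \{y > z : z \in dcl(a\mathcal{M}_0) \cap \mathcal{Q}'\}$.

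Thus the main task is to establish consistency of $\Sigma^-$. By compactness, and using the observation that minima of finitely many elements of $\mathcal{Q}'$ remain in $\mathcal{Q}'$, this reduces to the following single-parameter claim: for every $z \in \mathcal{Q}'$, the type $q(y) \cup \{y < z\}$ is consistent. I would argue this by contradiction. If it fails, compactness yields a formula $\phi(y) \in q$ with $\mathcal{M} \models \forall y(\phi(y) \to y \geq z)$. The set $A := \phi(\mathcal{M})$ is $\mathcal{M}_0$-definable and contains $\alpha(a)$. By Cell Decomposition (Fact \ref{celldecomposition}), $A$ is a finite union of $\mathcal{M}_0$-definable cells, and since it is bounded below by $z$ it does not extend to $-\infty$, hence its minimum $m_0 := \min A$ exists and lies in $\mathcal{M}_0$. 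The lower bound $z$ then gives $z \leq m_0$.

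For the contradiction I would use non-algebraicity of $q$: a complete $1$-type realized by an $\mathcal{M}_0$-element is algebraic, so $\alpha(a) \notin \mathcal{M}_0$ and, for the same reason, $\mathcal{Q}' \cap \mathcal{M}_0 = \emptyset$. The first of these combined with $\alpha(a) \in A$ gives $m_0 < \alpha(a)$. Since $m_0 \in \mathcal{M}_0$ and $m_0 < \alpha(a) \in \mathcal{Q}'$, the disjointness $\mathcal{Q}' \cap \mathcal{M}_0 = \emptyset$ forces $m_0$ to sit strictly below the entire cut $\mathcal{Q}'$. In particular $m_0 < z$, which contradicts $z \leq m_0$. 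The dual argument for $\Sigma^+$ runs identically, using $\max A$ in place of $\min A$.

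I expect the main obstacle to be precisely this last cut-theoretic step: upgrading the weak inequality $z \leq m_0$ coming from the $\mathcal{M}_0$-definable lower bound into the strict inequality $z > m_0$ forced by the non-principality of the cut $\mathcal{Q}'$. Everything else is routine compactness in the saturated model $\mathcal{M}$ together with the already-established surjectivity statement of Lemma \ref{lemasobre}.
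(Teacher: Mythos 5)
Your proof is correct, and it takes a genuinely different route to establishing consistency of the relevant partial type. The paper works \emph{directly}: it takes a finite subset of the type, reduces the congruence conditions via the Chinese Remainder Theorem, takes the maximum $\beta(a)$ of the finitely many lower bounds in $\mathcal{Q}'$, shows the interval $(\beta(a),d')$ (with $d'\in\mathcal{M}_0$) is infinite by invoking the dimension-$n$ hypothesis on $p$ (if it were finite, $\beta(a)$ would lie in $\mathcal{M}_0$, forcing an algebraic dependence among the coordinates of $a$), and then exhibits an explicit witness $\beta(a)+j$ in the right residue class. You instead argue \emph{by contradiction}: if $q(y)\cup\{y<z\}$ were inconsistent, compactness gives an $\mathcal{M}_0$-definable set $A\in q$ bounded below by $z$; cell decomposition and discreteness of the order then yield $m_0:=\min A\in\mathcal{M}_0$ with $z\le m_0<\alpha(a)$, and since $m_0\in\mathcal{M}_0$ the formula $m_0<y$ belongs to the $\{<\}$-reduct of $q$, forcing $m_0<z$ for every $z\in\mathcal{Q}'$ --- a contradiction. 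Both arguments are hiding the same underlying fact, namely $\mathcal{Q}'\cap\mathcal{M}_0=\emptyset$ for a non-algebraic $1$-type, but you use it head-on whereas the paper routes it through the dimension hypothesis (which your argument notably does not need). Your version buys a cleaner, more conceptual consistency proof that leans on cell decomposition and the cut structure rather than on explicit residue-class bookkeeping; the paper's version buys a concrete witness and stays closer to the quantifier-elimination syntax. One tiny point of phrasing: the step from $m_0<\alpha(a)$ to ``$m_0$ lies below all of $\mathcal{Q}'$'' doesn't actually need the disjointness $\mathcal{Q}'\cap\mathcal{M}_0=\emptyset$ that you cite there; it follows immediately from $m_0\in\mathcal{M}_0$ together with $m_0<\alpha(a)$, since $m_0<y$ is then part of the defining $\mathcal{M}_0$-cut of $\mathcal{Q}'$. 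The disjointness is used earlier, to get $m_0\ne\alpha(a)$, and your derivation of it from non-algebraicity of $q$ is correct.
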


\begin{proof} Consider the partial type given by \[\Sigma(y)=q(y)\cup
\{\beta({a})<y \ | \text{  $\beta$ is a
$\mathcal{M}_{0}$-definable function and  $\beta({a}) \in
\mathcal{Q}'$}\}.\]

By quantifier elimination, $q(y)$ is completely determined  by its atomic formulas over $\mathcal{M}_{0}$,
and those formulas  determine a linear system of congruences together with formulas of the form $b<x < b'$
where $b,b' \in \mathcal{M}_{0}$.  Let $\Sigma_{0}(y)$ be a finite subset of $\Sigma(y)$. We may assume that
$\Sigma_{0}(y)$ is a conjunction of the form
\[\underbrace{ \bigwedge_{i=1}^{r}(y \equiv_{n_{i}} c_{i})}_{(1)} \wedge  \underbrace{ \bigwedge_{j=1}^{s}b_{j}^{1}< y < b_{j}^{2}}_{(2)} \wedge \underbrace{ \bigwedge_{k=1}^{t}( y > \beta_{k}({a}))}_{(3)},\] where for each $i\leq r$, $0 \leq c_{i} < n_{i}$ are integers; for each $j\leq s$, $b_{j}^{1},b_{j}^{2} \in \mathcal{M}_{0}$
and for all $k\leq t$, $\beta_{k}({x})$ is an $\mathcal{M}_{0}$-definable function.
By the generalized version of the Chinese Reminder Theorem we know that the conjunction (1) can be reduced to a single formula of the form $y \equiv_{N} c$. Additionally, we can reduce $(2)$  to a single formula $d< x <d'$ and let $\beta({a})=\max\{\beta_{i}({a}) \ | \ i\leq n\}$. By cell decomposition, there is a cell $D$ such that $\mathcal{P} \subseteq D$ and $\beta\upharpoonright_{D}$ is an $\mathcal{M}_{0}$-definable linear function.\\

On the other hand, the interval $(\beta({a}), d')$ must be
infinite. Assume by contradiction that it is finite, so there is
$k\in \mathbb{N}$ such that $\beta({a})+k=d'$, but this
contradicts $p({x})\in S_n(\mathcal{M}_{0})$ has dimension $n$
(since $\beta$ is a linear function, we can choose $r \leq n$ the
maximum index such that the coefficient in $\beta$ of $a_{r}$ is
non-zero, isolating $a_{r}$ we can conclude that $a_{r} \in
dcl(a_{1}\dots a_{r-1} d'))$. So let  $j \in \mathbb{N}$ such that
$\beta({a})+j \equiv_{N} c$, obtaining $\mathcal{M} \models
\Sigma_{0}(\beta({a})+j)$.

Thus, by compactness and saturation there is an element
$\alpha_2\models \Sigma(y)$. In particular, we have
$\alpha_2\models q(y)$ so by Lemma \ref{lemasobre} there is
${a}_2\in \mathcal{P}$ such that $\alpha({a}_{2})=\alpha_2$. By
construction of the type $\Sigma(y)$, we have that
$dcl({a}\mathcal{M}_{0})\cap \mathcal{Q}'<\alpha({a}_2)$. We can
show similarly that there is an ${a}_1\in \mathcal{P}$ such that
$\alpha({a}_1)$ realizes the partial type
\[\Sigma_1(y):= q(y)\cup
\{y<\beta({a})\ |\  \text{$\beta$ is a $\mathcal{M}_{0}$-definable
function and $\beta({a})\in\mathcal{Q}'$}\},\] from which we will
obtain $\alpha_1({a})<dcl({a}\mathcal{M}_{0})\cap \mathcal{Q}'$.
\end{proof}

\subsection{Forking and generic formulas in Presburger arithmetic}\label{sub3.2}
\vspace{0.5cm}

We will need the following definitions.

\begin{definition} Let $X$ be a definable subset of $G$. $X$ is said to be
\emph{left generic} if there are $g_{1},\dots,g_{k} \in G$ such
that $\displaystyle{G=\bigcup_{i=1}^{k} g_{i}  X}$. Similarly
we define a right generic set. A formula is called \emph{left (right) generic} if its set of
realizations is left (right) generic in $G$. Given a global type $p$ centered at $G$,
we say that \emph{$p$ is left generic} if every formula in $p$ is left generic in $G$.
If either a set, formula or type is both left and right generic,
we will refer to it simply as \emph{generic}. \end{definition}

\begin{definition}
\begin{enumerate}
\item Define $Def_{G}(A)=\{ X \subseteq G \ | \ \text{$X$ is definable over $A$ }\}$. We will indicate as $Def_G$ the set $Def_{G}(\mathcal{M})$.
\item Let $\mathcal{I}_{G}(A)=\{ X \in Def_{G}(A) \ | \ \text{$X$ is non-generic }\}$. We will be mainly interested in $\mathcal{I}_{G}(\mathcal{M})$ which we will denote as $\mathcal I_{G}$.
\end{enumerate}
\end{definition}

In \cite{acotado} Pillay and Peterzil presented a
characterization of the definable generic subsets of an abelian
and definably compact group in o-minimal theories, using the fact
that for definably compact o-minimal groups, non forking is
essentially equivalent to finite satisfiability over a small model
(cf. Theorem 2.1 in \cite{acotado}). With this idea in mind, we
attempt to find conditions on the definable groups in Presburger
similar to ``definable compactness'' that allow us to recover
analogues to results of  \cite{acotado} in  Presburger arithmetic.
One such condition might be that the group is \emph{bounded}.

The main result of this subsection, Theorem \ref{PIFpunto}, will
be proved by an induction on dimension. The following is the base
case.

\begin{lemma}\label{1celdasdisjuntas}  Let $\phi(x, {a})$ be a
formula defining either  $0$-cell or a bounded $1$-cell. Assume
that $\phi(\mathcal{M}, {a}) \cap \mathcal{M}_{0} = \emptyset$.
Then there is an element ${a}' \in \mathcal{M}^{k}$ such that
\begin{enumerate} \item
$\tp({a}/\mathcal{M}_{0})=\tp({a}'/\mathcal{M}_{0})$, \item
$\phi(\mathcal{M},{a}) \cap \phi(\mathcal{M},{a}')= \emptyset$.
\end{enumerate}

\end{lemma}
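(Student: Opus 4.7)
My plan is to split the proof into the two cases of the hypothesis. The $0$-cell case is immediate from homogeneity of $\mathcal{M}$, while the bounded $1$-cell case will be reduced to an application of Lemma \ref{lemaconjugado}.

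In the $0$-cell case, $\phi(\mathcal{M},a)=\{b\}$ with $b=\tau(a)\in dcl(\mathcal{M}_{0},a)$ by Corollary \ref{clausuradefinible}. Since $b\notin\mathcal{M}_{0}$ and $acl(\mathcal{M}_{0})=dcl(\mathcal{M}_{0})=\mathcal{M}_{0}$ for $\mathcal{M}_{0}$ a model (cell decomposition forces every finite definable set to be a union of $0$-cells), the $1$-type $\tp(b/\mathcal{M}_{0})$ is non-algebraic. By saturation there is $b'\neq b$ realizing it, and by homogeneity some $\sigma\in\operatorname{Aut}(\mathcal{M}/\mathcal{M}_{0})$ with $\sigma(b)=b'$. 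Then $a':=\sigma(a)$ satisfies $\tp(a'/\mathcal{M}_{0})=\tp(a/\mathcal{M}_{0})$ and $\phi(\mathcal{M},a)\cap\phi(\mathcal{M},a')=\{b\}\cap\{b'\}=\emptyset$.

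For the bounded $1$-cell case, write $I:=\phi(\mathcal{M},a)=\{x:\alpha_{0}\,\square_{1}\,x\,\square_{2}\,\beta_{0},\ x\equiv_{N}c\}$ with $\alpha_{0}=\alpha(a)$ and $\beta_{0}=\beta(a)$ for $\mathcal{M}_{0}$-linear functions $\alpha,\beta$. Since the cell is infinite and bounded, $\beta_{0}-\alpha_{0}$ exceeds every standard multiple of $N$. Combined with $I\cap\mathcal{M}_{0}=\emptyset$ this rules out $\alpha_{0}\in\mathcal{M}_{0}$ (otherwise a shift by some $0\leq k<N$ would land in $I\cap\mathcal{M}_{0}$), and likewise $\beta_{0}\notin\mathcal{M}_{0}$; the same argument rules out any $m\in\mathcal{M}_{0}$ with $\alpha_{0}<m<\beta_{0}$, so $\alpha_{0}$ and $\beta_{0}$ realize a common non-algebraic $\mathcal{M}_{0}$-cut. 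Now let $\tilde{a}\subseteq a$ be a $dcl$-basis of $a$ over $\mathcal{M}_{0}$ and let $\tilde{\alpha},\tilde{\beta}$ be the $\mathcal{M}_{0}$-linear functions with $\alpha(a)=\tilde{\alpha}(\tilde{a})$ and $\beta(a)=\tilde{\beta}(\tilde{a})$. Then $\tp(\tilde{a}/\mathcal{M}_{0})$ has maximal dimension and $\tp(\beta_{0}/\mathcal{M}_{0})$ is non-algebraic, so Lemma \ref{lemaconjugado} applied to $\tilde{\beta}$ yields some $\tilde{a}^{1}$ realizing $\tp(\tilde{a}/\mathcal{M}_{0})$ with $\tilde{\beta}(\tilde{a}^{1})<dcl(\tilde{a}\,\mathcal{M}_{0})\cap \mathcal{Q}'$; since $\alpha_{0}=\tilde{\alpha}(\tilde{a})\in dcl(\tilde{a}\,\mathcal{M}_{0})$ lies in the same $<$-cut $\mathcal{Q}'$ as $\beta_{0}$, this forces $\tilde{\beta}(\tilde{a}^{1})<\alpha_{0}$. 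Extending $\tilde{a}\mapsto\tilde{a}^{1}$ to $\sigma\in\operatorname{Aut}(\mathcal{M}/\mathcal{M}_{0})$ and setting $a':=\sigma(a)$, we get $\tp(a'/\mathcal{M}_{0})=\tp(a/\mathcal{M}_{0})$ (as $a\in dcl(\mathcal{M}_{0}\tilde{a})$) and $\beta(a')=\tilde{\beta}(\tilde{a}^{1})<\alpha(a)$, so the interval $[\alpha(a'),\beta(a')]$ lies strictly below $\alpha(a)$ and $\phi(\mathcal{M},a)\cap\phi(\mathcal{M},a')=\emptyset$.

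The main obstacle is arranging the dimension hypothesis of Lemma \ref{lemaconjugado}, which requires the parameter tuple to have type of maximal dimension---a property not assumed of $a$ itself. Passing to a $dcl$-basis $\tilde{a}$ is precisely the reduction needed; the fact that $a\in dcl(\mathcal{M}_{0}\tilde{a})$ then promotes the conjugate of $\tilde{a}$ produced by the lemma to a conjugate of the full tuple $a$ with the desired properties.
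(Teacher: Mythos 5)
Your proof is correct and follows essentially the same approach as the paper's: after disposing of the $0$-cell case, reduce to an independent (basis) tuple, observe that the endpoints $\alpha(a)$ and $\beta(a)$ must lie in the same non-algebraic $\mathcal{M}_0$-cut (since any $\mathcal{M}_0$-point strictly between them would, via the congruence condition, force a point of $\phi(\mathcal{M},a)$ into $\mathcal{M}_0$), and then apply Lemma \ref{lemaconjugado} to separate the two intervals. The only cosmetic difference is that you push $\beta(a')$ below $\alpha(a)$ using $a^1$, whereas the paper pushes $\alpha(a')$ above $\beta(a)$ using $a^2$ — a symmetric choice — and you spell out the $0$-cell case and the passage to a $\operatorname{dcl}$-basis (with the automorphism lifting) in more detail than the paper's ``without loss of generality'' phrasing.
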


\begin{proof} Without loss of generality, we may assume that ${a}$ is a tuple of independent elements over $\mathcal{M}_{0}$. If $\phi(x, {a})$ defines a single point $p \notin \mathcal{M}_{0}$, then the statement is clear. Suppose now that $\phi(x, {a})$ is a bounded $1$-cell, that is, $\phi(x, {a})$ is equivalent to a set defined by a formula of the form $(\alpha({a}) < x < \beta({a})) \wedge x \equiv_{N}c)$ where $\alpha$ and $\beta$ are definable functions and $0 \leq c < N$ are integers. Note that $\alpha({a})$ and $\beta({a})$ are in the same $\mathcal{M}_{0}$-cut. Assume by contradiction that there is $d \in \mathcal{M}_{0}$, such that $\alpha({a})< d < \beta({a})$, since the interval $(\alpha({a}),\beta({a}))$ is infinite, either $(\alpha({a}),d)$ is infinite or $(d,\beta(\alpha))$ is infinite,  so there is an element $m_0\in [d-N,d+N]\cap (\alpha({a}),\beta({a}))$ satisfying $m_0\equiv_N c$. So, $m_0\in \phi(\M,{a})\cap \mathcal{M}_{0}$, but this contradicts our hypothesis.
Let $\mathcal{Q}'$ be the set of points of $\mathcal{M}$ that are
in the same $\mathcal{M}_{0}$-cut as $\alpha({a})$. By Lemma
\ref{lemaconjugado}, there is an element ${a}' \in
\mathcal{M}^{k}$ such that $\tp({a}/\mathcal{M}_{0})=
\tp({a}'/\mathcal{M}_{0})$, and $\alpha({a}') >
dcl(\mathcal{M}_{0}{a}) \cap \mathcal{Q}'$. In particular, since
$\beta({a})\in dcl(\mathcal{M}_{0}{a})\cap \mathcal{Q}'$, we have
$\beta({a}) < \alpha({a}')$. Thus, \[\phi(\M,{a})\cap
\phi(\M,{a}')\subseteq (\alpha({a}),\beta({a}))\cap
(\alpha({a}'),\beta({a}'))=\emptyset.\qedhere \]\end{proof}

For the next result we will need the following, which is proved in
\cite{CoSt} but we use the statement given in Fact 1.1 in
\cite{SiSt}.

\begin{fact}\label{fact1} Let $T$ be a dependent theory,
$\mathcal{M}$ a model and $\mathcal{N}$ a saturated extension of $\mathcal{M}$.

We say that a formula $\phi(x,b;d)$ over $Nb$ \emph{$b$-divides}
over $\mathcal M$ if there is an $\mathcal M$-indiscernible
sequence $\langle d_i\rangle_{i\in \mathbb N}$ inside $N$ with
$d_0=d$ and $\{\phi( x,b ; d )\}$ $k$-inconsistent.

As usual, $b$-forking is then defined as implying a disjunction of
formulas which $b$-divide.

The following are equivalent:

\begin{enumerate}
\item $\phi( x,b ; d )$ does not $b$-divide over $\mathcal{M}$;

\item $\phi( x,b ; d )$ does not $b $-fork over $\mathcal{M}$;

\item If $\langle d_i : i < \omega\rangle$ is a strict Morley
sequence of $tp( d/\mathcal{M} )$ inside $\mathcal{N}$,  then $\{\phi( x,b ; d_i ) :
i < m \}$ is consistent where $m$ is greater than the alternation
number of $\phi( x,y ; z )$.
\end{enumerate}
\end{fact}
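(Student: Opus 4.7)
The plan is to follow the standard Chernikov–Kaplan strategy for the equivalence of forking and dividing in dependent theories, adapted to the ``$b$-relative'' setting where $b$ is treated as a fixed base parameter and only the $d$-coordinate varies. The direction (2) $\Rightarrow$ (1) is immediate from the definitions: every formula that $b$-divides certainly $b$-forks, so the contrapositive yields that non-$b$-forking implies non-$b$-dividing. The nontrivial content is packaged into the implications (1) $\Rightarrow$ (3) and (3) $\Rightarrow$ (2).

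For (1) $\Rightarrow$ (3), the plan is to assume $\phi(x, b; d)$ does not $b$-divide over $\mathcal{M}$ and let $\langle d_i : i < \omega\rangle$ be a strict Morley sequence of $\tp(d/\mathcal{M})$ with $d_0 = d$. Suppose for contradiction that $\{\phi(x, b; d_i) : i < m\}$ is inconsistent for $m$ larger than the alternation number of $\phi(x, y; z)$. I would then extract, using Ramsey plus compactness, an $\mathcal{M}$-indiscernible subsequence witnessing $k$-inconsistency for some $k$; because the sequence was chosen strictly Morley, one can also transfer indiscernibility to a sequence starting with $d$ itself, which would force $\phi(x, b; d)$ to $b$-divide, contradicting (1). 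The NIP hypothesis enters through the uniform alternation bound: a formula in a dependent theory cannot alternate infinitely often along any indiscernible sequence, so once $m$ exceeds the alternation number the only way to get $m$-inconsistency is genuine $b$-dividing.

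For (3) $\Rightarrow$ (2) I would argue contrapositively. Assume $\phi(x, b; d)$ $b$-forks over $\mathcal{M}$, so there exist formulas $\psi_1(x, b; e_1), \dots, \psi_r(x, b; e_r)$, each $b$-dividing over $\mathcal{M}$, with $\phi(x, b; d) \vdash \bigvee_{j=1}^{r} \psi_j(x, b; e_j)$. Working in the extension $\mathcal{N}$ and appealing again to NIP together with the strict-Morley-sequence property, I would construct, along a suitable strict Morley sequence of $\tp(d/\mathcal{M})$, a configuration where every disjunct $\psi_j$ becomes $k$-inconsistent past the alternation threshold. This forces $\{\phi(x, b; d_i)\}$ to be inconsistent for large $m$, contradicting (3).

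The main obstacle will be the ``$b$-relative'' bookkeeping: the classical forking/dividing equivalence in NIP theories must be set up so that $b$ remains a fixed parameter and the extracted indiscernible sequences respect this, which is exactly why strict Morley sequences (rather than ordinary ones) appear in (3). The propagation of the alternation bound through the disjunction in the proof of (3) $\Rightarrow$ (2), and the construction of the joint $b$-dividing pattern from a hypothetical $m$-inconsistency in (1) $\Rightarrow$ (3), both require care to ensure that all parameters remain properly aligned over $\mathcal{M}$ throughout the extraction process.
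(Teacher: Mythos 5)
The paper does not prove this statement: it is presented as a \emph{Fact}, cited as Fact 1.1 of \cite{SiSt}, which in turn attributes it to \cite{CoSt}. There is therefore no in-paper proof to compare your sketch against; you are reconstructing a result the authors deliberately treat as a black box.

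As for the reconstruction itself, the general framework (strict Morley sequences plus NIP alternation bounds, in the spirit of Chernikov--Kaplan) is the right one, but the sketch has two concrete gaps. In the $(1)\Rightarrow(3)$ step you propose to ``extract, using Ramsey plus compactness, an $\mathcal{M}$-indiscernible subsequence'' from the strict Morley sequence; but a strict Morley sequence over $\mathcal{M}$ is by definition already $\mathcal{M}$-indiscernible, so there is nothing to extract, and if $\{\phi(x,b;d_i) : i<m\}$ were inconsistent, indiscernibility alone would make the whole family $m$-inconsistent. The actual difficulty, which the sketch glosses over by writing ``with $d_0=d$,'' is the base-point bookkeeping: (3) is quantified over strict Morley sequences of $\operatorname{tp}(d/\mathcal{M})$ that need not begin at $d$, while $b$-dividing is defined with $d_0=d$, and since $b$ is a fixed external parameter not fixed by automorphisms over $\mathcal{M}$ you cannot simply conjugate a strict Morley sequence to start at $d$ without moving $\phi(\cdot,b;\cdot)$ along with it. Handling this is exactly what makes the $b$-relative notions nontrivial. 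Similarly, in $(3)\Rightarrow(2)$ the sentence ``construct a configuration where every disjunct becomes $k$-inconsistent past the alternation threshold'' is a restatement of the broom lemma (equivalently, of producing a suitable $\mathcal{M}$-invariant extension of $\operatorname{tp}(d/\mathcal{M})$ compatible with $\phi(x,b;\cdot)$) rather than an argument for it, and adapting that machinery to the $b$-relative setting is precisely the content of the cited result, so the sketch leaves the main work undone.
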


We can now prove the analogue of Theorem 2.1 in \cite{acotado}.

\begin{theorem} \label{PIFpunto} Let $X(a):=\phi(\mathcal{M},a) \subseteq
\mathcal{M}^{n}$ be a definable bounded set. The following
statements are equivalent:

\begin{enumerate}

\item $\phi(x,a)$ does not fork over $\mathcal{M}_0$.

\item $\{ \phi(\mathcal{M},y) \ | \ {y} \models
\tp({a}/\mathcal{M}_{0}) \}$ has the finite intersection property.

\item $\phi(\mathcal{M},a) \cap \mathcal{M}_0^{n}\neq \emptyset$.\\
\end{enumerate}
\end{theorem}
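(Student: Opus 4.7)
The plan is to establish the cycle (3) $\Rightarrow$ (2) $\Rightarrow$ (1) $\Rightarrow$ (3); the first two implications are short and the substance is in the last. For (3) $\Rightarrow$ (2), any witness $m_0 \in \phi(\mathcal{M}, a) \cap \mathcal{M}_0^n$ is fixed by every $\mathcal{M}_0$-automorphism, so $m_0 \in \phi(\mathcal{M}, a')$ for every $a' \models \tp(a/\mathcal{M}_0)$, trivially giving the finite intersection property. For (2) $\Rightarrow$ (1), I would use that Presburger arithmetic is NIP, so by Fact \ref{fact1} it suffices to rule out dividing: any $\mathcal{M}_0$-indiscernible sequence $\langle a_i\rangle$ with $a_0 = a$ consists of realizations of $\tp(a/\mathcal{M}_0)$, so by (2) every finite subintersection $\bigcap_{i\in I}\phi(\mathcal{M}, a_i)$ is nonempty, making $\{\phi(x, a_i)\}_i$ consistent and $\phi(x,a)$ non-forking.

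For the main direction (1) $\Rightarrow$ (3) I would argue the contrapositive: assuming $\phi(\mathcal{M}, a) \cap \mathcal{M}_0^n = \emptyset$, I show $\phi(x, a)$ forks over $\mathcal{M}_0$. The Cell Decomposition Theorem (Fact \ref{celldecomposition}) applied over $\mathcal{M}_0\cup\{a\}$ partitions $\phi(\mathcal{M},a)=\bigsqcup_{j=1}^k C_j(a)$ into bounded cells, each still disjoint from $\mathcal{M}_0^n$. Since $\phi(x, a)$ is equivalent to $\bigvee_j C_j(x, a)$, it is enough to prove that each $C_j(x, a)$ divides over $\mathcal{M}_0$; then $\phi$ forks. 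The theorem therefore reduces to the following Key Lemma: every bounded cell $C(a) \subseteq \mathcal{M}^n$ with $C(a) \cap \mathcal{M}_0^n = \emptyset$ has $C(x, a)$ dividing over $\mathcal{M}_0$.

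I would prove the Key Lemma by induction on the ambient dimension $n$. The base case $n = 1$ is essentially Lemma \ref{1celdasdisjuntas}: iterating the lemma and invoking compactness (or a Ramsey extraction of indiscernibles) yields an $\mathcal{M}_0$-indiscernible sequence $\langle a_i\rangle$ of realizations of $\tp(a/\mathcal{M}_0)$ with pairwise disjoint $C(a_i)$, witnessing $2$-dividing. For the inductive step, let $D(a)$ denote the projection of $C(a)$ onto the first $n$ coordinates, itself a cell by the definition. If $D(a)\cap \mathcal{M}_0^n = \emptyset$ the inductive hypothesis gives a dividing sequence for $D(x, a)$ which pulls back directly to one for $C(x, a)$. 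Otherwise, fix $x_0 \in D(a) \cap \mathcal{M}_0^n$ and consider the fiber $C_{x_0}(a) = \{t : (x_0, t) \in C(a)\}$, which is a $0$- or $1$-cell over $\mathcal{M}_0\cup\{a\}$ disjoint from $\mathcal{M}_0$.

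The main obstacle is precisely this last subcase: Lemma \ref{1celdasdisjuntas} applied to $C_{x_0}(a)$ yields a conjugate $a'$ making the $x_0$-fibers disjoint but not necessarily the full cells. I would overcome this by exploiting the affine dependence of the defining data of $C(a)$ on $a$: by Corollary \ref{terminos} and Remark \ref{matriznot}, the linear functions $\alpha(\cdot, a)$, $\beta(\cdot, a)$ (and the graph function in the $(\ldots,0)$ case) are affine in $a$, so shifting $a$ to an appropriate conjugate produced by Lemma \ref{lemaconjugado} translates the whole tube or graph uniformly, forcing $C(a)$ and $C(a')$ to become globally disjoint and closing the induction.
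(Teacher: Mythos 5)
Your cyclical scheme $(3)\Rightarrow(2)\Rightarrow(1)\Rightarrow(3)$ is sound in principle, and the two easy legs you give are the same ones the paper uses (the paper phrases them as $(1)\Rightarrow(2)$ and $(3)\Rightarrow(1)$, plus the standard NIP fact that finite intersection of conjugates rules out dividing, hence forking, by Fact~\ref{fact1}). The divergence, and the gap, is in the hard direction. The paper follows Simon--Starchenko: take a strict Morley sequence $\langle a_i\rangle$ of $\tp(a/\mathcal{M}_0)$, form $\chi(x_1,x_2,\bar a):=\bigwedge_{i\leq m}\phi(x_1,x_2,a_i)$ with $m$ the alternation number, project out $x_2$ to get $\theta(x_1,\bar a)$, apply the one-variable case (Lemma~\ref{1celdasdisjuntas}) to $\theta$, and then feed the resulting $b_1\in\mathcal{M}_0$ back through Fact~\ref{fact1} to run the induction on $x_2$. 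Nowhere does the paper attempt to make entire cells disjoint; the conjunction-over-a-Morley-sequence trick is precisely what lets the base case be applied to a one-variable projection without any geometric control.

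Your Key Lemma is where the argument as sketched breaks down. Two issues. First, the projection $D(a)$ of a cell $C(a)$ onto the first $n$ coordinates need not itself be a cell: in Cluckers' definition the nominal base $C$ of an $(\bar\imath,1)$-cell may contain points over which the fiber is empty (the definition requires only that fiber sizes be unbounded over $C$, not that they be everywhere nonempty), so the set $\{x: \exists t\, (x,t)\in C(a)\}$ is a priori only a definable subset of a cell, not a cell, and the induction hypothesis as you have stated it does not apply to it directly. Second, and more seriously, the step from ``the fiber $C_{x_0}(a)$ is disjoint from its conjugate'' to ``$C(a)$ is disjoint from $C(a')$'' does not follow from the affine dependence on $a$. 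Writing $\alpha(x,a)=f_\alpha(x)+\gamma_\alpha(a)$ and $\beta(x,a)=f_\beta(x)+\gamma_\beta(a)$, disjointness of the full tubes requires, for every $x\in D(a)\cap D(a')$, that $\gamma_\alpha(a')-\gamma_\beta(a)>f_\beta(x)-f_\alpha(x)$ (or the symmetric inequality). The right-hand side varies with $x$, and the quantity one would need to exceed, roughly $\sup_{x\in D(a)}\bigl(f_\beta(x)-f_\alpha(x)\bigr)+\gamma_\beta(a)$, lives in $dcl(\mathcal{M}_0 a)$ but need not lie in the same $\mathcal{M}_0$-cut $\mathcal{Q}'$ as $\gamma_\alpha(a)$. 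Lemma~\ref{lemaconjugado} only lets you push $\gamma_\alpha(a')$ above $dcl(\mathcal{M}_0 a)\cap\mathcal{Q}'$, i.e.\ within the cut; if the cut is a bounded gap and the cell has appreciable slope so that $f_\beta-f_\alpha$ ranges over an amount comparable to (or larger than) the gap width, the required target escapes $\mathcal{Q}'$ and the conjugation cannot reach it. Note also that the two boundary constants $\gamma_\alpha(a)$ and $\gamma_\beta(a)$ are different $\emptyset$-definable functions of $a$ and need not move in lockstep, so the picture of the tube being ``translated uniformly'' is not accurate. None of this arises in the paper's argument, which is why the SiSt reduction is the route of choice here; to salvage your geometric approach you would need a separate argument controlling the range of $f_\beta-f_\alpha$ over a bounded cell relative to the cut of $\gamma_\alpha(a)$, and it is not clear this can be done in general.
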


\begin{proof}

The direction $(1)\Rightarrow (2)$ follows by the fact that in an
NIP theory any formula $\phi(x,a)$ which does not fork over a
small model $\M_{0}$ is contained in a global $\M_{0}$-invariant
type. The implication  $(3)\Rightarrow (1)$ is clear.

The proof of  $(2)\Rightarrow (3)$ is completely analogous to that
of Lemma 2.2 in \cite{SiSt}, but we include it for completeness.

Let $\phi(x,{a})$ a definable set. We will prove by induction on
$n$ that if $\phi(x,{a})$ does not divide (or fork) over $\mathcal{M}_0$ then
it has a point in $\mathcal{M}_0$.

The case $n=1$ is given by Lemma \ref{1celdasdisjuntas}, assume
that the theorem holds for $n=k$ and assume that $\phi(x,a)\subset
\mathcal{M}^{k+1}$.

So let $x=(x_1, x_2)$ where $x_1$ is a 1-tuple, $x_2$ is a
$k$-tuple, and $\phi(x,a)=\phi(x_1, x_2, a)$. Let $\langle a_i\rangle_{i\in \omega}$ be
a strict Morley sequence of $a$ over $\mathcal{M}_0$. Let $m$ be the
alternating number of $\phi(x_1, x_2; y)$, let
\[
\chi(x_1, x_2, a):= \bigwedge_{i\leq m} \phi(x_1, x_2, a_i),
\]
and let
\[
\theta(x_1, a):= \exists x_2 \chi (x_1, x_2, a).
\]

Let $p(x)$ be a global $\mathcal{M}_{0}$-invariant type extending $\phi(x,a)$.
All $\phi(x,a_i)$ must be in $p(x)$ (by $\mathcal{M}_{0}$-invariance) so
$\phi(x_1, x_2; a)$ is in $p(x)$, and by construction so is
$\theta(x_1, a)$.

This implies of course that $\theta(x_1,a)$ does not fork
over $\mathcal{M}_0$ so that by the 1-variable case we have a point $b_1$ in
$\mathcal{M}_0$ such that $\theta(b_1,a)$ holds, so that $\exists x_2
\chi (b_1, x_2, a)$, and
\[
\bigwedge_{i\leq m} \phi(b_1, x_2, a_i)
\]
is consistent.

By Fact \ref{fact1} this implies that $\phi(b_1, x_2, a)$ does not
fork over $\mathcal{M}_0$ (recall $b_1\in \mathcal{M}_0$) so that by the induction
hypothesis, there is some tuple $b_2\in \mathcal{M}_0$ such that $\phi(b_1,
b_2, a)$ holds, and $(b_1, b_2)\in \phi(x,a)\cap
\mathcal{M}_0^{k+1}$, as required.
\qedhere
\end{proof}

\begin{corollary}\label{acotadoIdeal} Let $G$ be a definable bounded group in Presburger arithmetic,
then $\mathcal{I}_{G}$ is an ideal.
\end{corollary}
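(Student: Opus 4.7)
The plan is to reduce the corollary to Theorem \ref{PIFpunto}, which characterizes non-forking for bounded definable sets as having a realization in $\mathcal{M}_{0}^{n}$. The key structural observation from that theorem is that the family of bounded definable sets $Y$ with $Y\cap \mathcal{M}_{0}^{n}=\emptyset$ is trivially closed under finite unions and under taking subsets: if $Y_{1}\cap \mathcal{M}_{0}^{n}=\emptyset$ and $Y_{2}\cap \mathcal{M}_{0}^{n}=\emptyset$ then $(Y_{1}\cup Y_{2})\cap \mathcal{M}_{0}^{n}=\emptyset$. Equivalently, by Theorem \ref{PIFpunto}, the family of bounded definable sets that fork over $\mathcal{M}_{0}$ forms an ideal in the Boolean algebra of bounded definable subsets of $\mathcal{M}^{n}$.

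To transfer this to $\mathcal{I}_{G}$, I would establish the bridge: for a suitably chosen small elementary submodel $\mathcal{M}_{0}$ containing the parameters defining $G$, a definable subset $X\subseteq G$ is non-generic if and only if $X$ forks over $\mathcal{M}_{0}$. The easy direction---generic implies non-forking---uses amenability from Corollary \ref{groupdefamenable}: a generic $X$ admits a finite cover $G = \bigcup_{i=1}^{k} g_{i}X$, whence $\mu_{G}(X)\geq 1/k > 0$ by invariance and subadditivity, and standard NIP measure-theoretic arguments then rule out that $X$ forks over $\mathcal{M}_{0}$. The harder converse---non-forking implies generic---would proceed by showing that in a bounded definable group, a definable subset meeting $\mathcal{M}_{0}^{n}$ is ``fat'' enough to admit a finite covering of $G$ by its translates; this exploits the cell-decomposition and dimension structure of Presburger together with the boundedness of $G$, essentially by producing a translation-invariant generic type whose translates remain finitely satisfiable in $\mathcal{M}_{0}$.

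Once the bridge is in place, closure of $\mathcal{I}_{G}$ under finite unions is immediate: given $X_{1},X_{2}\in \mathcal{I}_{G}$, both fork over $\mathcal{M}_{0}$, so both miss $\mathcal{M}_{0}^{n}$ by Theorem \ref{PIFpunto}; hence $X_{1}\cup X_{2}$ also misses $\mathcal{M}_{0}^{n}$, therefore forks over $\mathcal{M}_{0}$, and so is non-generic. Closure under definable subsets and the containment $\emptyset\in\mathcal{I}_{G}$ are trivial, completing the verification that $\mathcal{I}_{G}$ is an ideal.

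The main obstacle will be the reverse direction of the bridge, i.e., that non-forking implies genericity for definable subsets of $G$. This is in spirit an \emph{fsg}-type statement for bounded definable groups in Presburger, and its proof is the delicate step: it likely requires an explicit construction of a generic type on $G$ built from dim-generic cuts and a verification, using boundedness and the saturation of $\mathcal{M}$, that every translate of this type is finitely satisfiable in $\mathcal{M}_{0}$. Theorem \ref{PIFpunto} itself would then provide the translation between finite satisfiability and the combinatorial genericity condition needed.
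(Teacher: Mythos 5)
The key step in your proposal---the ``bridge'' asserting that, over a suitable small model $\mathcal{M}_0$, a definable $X\subseteq G$ is non-generic if and only if $X$ forks over $\mathcal{M}_0$---is false in the direction you label as hard. Non-forking over $\mathcal{M}_0$ does \emph{not} imply genericity: any non-generic $\mathcal{M}_0$-definable subset of $G$ (for instance the singleton $\{e\}$, or any point of $G(\mathcal{M}_0)$) trivially does not fork over $\mathcal{M}_0$, and by Theorem \ref{PIFpunto} it meets $\mathcal{M}_0^n$, yet it is as far from generic as possible. The correct statement in this direction is Corollary \ref{fgenericGeneric}, which requires that \emph{every translate} $\phi(g^{-1}x,a)$ be non-forking over $\mathcal{M}_0$; once the quantifier over translates is reinstated, the reduction ``$X_1,X_2$ miss $\mathcal{M}_0^n$, hence $X_1\cup X_2$ misses $\mathcal{M}_0^n$'' no longer yields the ideal property, because a witnessing translate for $X_1$ need not be a witnessing translate for $X_2$. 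So the argument collapses precisely at the point you flag as delicate, and it cannot be repaired by the sort of fsg-type statement you sketch, since the translate-free version is simply not true.

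The paper's own proof takes a different and shorter route: it first reduces to an abelian subgroup $H$ of finite index using Theorem \ref{ab-by-finite}, then invokes the Peterzil--Pillay argument from Section 3 of \cite{acotado} with ``definably compact'' replaced by ``bounded'' (this is where Theorem \ref{PIFpunto} plays the role of their Theorem 2.1), concluding that $\mathcal{I}_H$ is an ideal; the ideal property then transfers from $H$ to $G$ because $H$ has finite index. Your proposed reduction to pure forking behaviour, if it were correct, would be cleaner, but as written it has a genuine hole; if you want a self-contained argument avoiding the Peterzil--Pillay citation, a more promising line is the measure-theoretic one implicit in Corollary \ref{genericPositive}: since $\mu_G$ is finitely additive and translation-invariant, $\mu_G(X_1\cup X_2)\leq \mu_G(X_1)+\mu_G(X_2)$, so two measure-zero sets have measure-zero union, and the equivalence ``generic $\Leftrightarrow$ positive measure'' immediately gives the ideal property---though one must then be careful about the logical order of the corollaries in Section \ref{sec3}.
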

\begin{proof}
Let $G$ be a definable bounded group, by Theorem \ref{ab-by-finite}
there is an abelian definable subgroup $H$ of finite index. Following the proof presented
in Section $3$ of \cite{acotado}, replacing definably compact by bounded,
 we conclude that $I_{H}$ is an ideal.
Since $H$ has finite index on $G$,
 $I_{G}$ must be also an ideal in the algebra $Def(G)$.
\end{proof}

Hence, if a finite union of sets is generic, then at least one of them will
be generic. We will use this repeatedly.

\medskip

 The following corollary was already observed in \cite{HrPi} (in
 the  discussion after Problem 5.5 in \cite{HrPi}).

\begin{corollary}\label{fgenericGeneric}
Let $\mathcal{M}_0$ be a small model, and let $X:=\phi(\mathcal{M},a)$ be a definable set  such that $\phi(g^{-1}x,a)$ does not
fork over $\mathcal{M}_0$ for any $g\in G(\mathcal{M})$. Then $X$ is generic.
\end{corollary}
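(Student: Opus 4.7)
The plan is to apply Theorem \ref{PIFpunto}(3)---which characterizes non-forking over $\mathcal{M}_{0}$ of a bounded formula by the presence of an $\mathcal{M}_{0}^{n}$-rational point in its realization set---to every left translate of $X$ at once, and then extract a finite subcover of $G$ by saturation.

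For each $g\in G(\mathcal{M})$, the formula $\phi(g^{-1}x,a)$ defines the left translate $gX\subseteq G$, which is bounded since $G$ is. By hypothesis this formula does not fork over $\mathcal{M}_{0}$, so Theorem \ref{PIFpunto}(3) supplies a point $m_{g}\in \mathcal{M}_{0}^{n}\cap gX$; equivalently $g^{-1}m_{g}\in X$, so $g\in m_{g}X^{-1}$, where $X^{-1}:=\{x^{-1}:x\in X\}$ is $\mathcal{M}_{0}\cup\{a\}$-definable. Letting $g$ range over $G(\mathcal{M})$ yields the cover
\[
G\ \subseteq\ \bigcup_{m\in G\cap \mathcal{M}_{0}^{n}} m X^{-1}.
\]
Since the index set has cardinality at most $|\mathcal{M}_{0}|$ and each $mX^{-1}$ is definable (as is $G$), saturation of $\mathcal{M}$ produces a finite subcover $G=\bigcup_{i=1}^{k} m_{i}X^{-1}$. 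Taking setwise inverses yields $G=G^{-1}=\bigcup_{i=1}^{k} Xm_{i}^{-1}$, so $X$ is right-generic.

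I expect the main obstacle to be the passage from right-generic to the two-sided notion of \emph{generic} used in the paper. Since $G$ is abelian-by-finite by Theorem \ref{ab-by-finite}, inside an abelian finite-index subgroup $H\leq G$ left- and right-genericity coincide, so one can intersect with $H$, rerun the Theorem \ref{PIFpunto}-plus-saturation step there, and then lift the cover back to $G$ through the finite coset decomposition, using Corollary \ref{acotadoIdeal} (the ideal structure of $\mathcal{I}_{G}$) to transfer genericity across cosets. Alternatively, if \emph{generic} is read here as \emph{left-generic} (as is natural when working with left-invariant measures $\mu_{G}$), then the same saturation argument applied to the formula $\phi(x^{-1},a)$ defining $X^{-1}$ gives left-genericity directly and no further bookkeeping is required.
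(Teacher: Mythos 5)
Your main argument is exactly the paper's: apply Theorem \ref{PIFpunto}(3) to each translate $gX$, pick an $\mathcal{M}_0$-point in it, observe that $\{mX^{-1}:m\in G(\mathcal{M}_0)\}$ covers $G(\mathcal{M})$, and invoke saturation to extract a finite subcover; the paper's own proof is just a terser version of this. You are right that this only directly yields that $X$ is a (right-)generic, a point the paper glosses over, and your first proposed fix (pass to an abelian finite-index subgroup via Theorem \ref{ab-by-finite} and use Corollary \ref{acotadoIdeal}) is the sound way to close it; however, your ``alternatively'' remark is not quite correct, since the hypothesis concerns the left translates $gX$ (formulas $\phi(g^{-1}x,a)$) and does not automatically transfer to the left translates of $X^{-1}$ (formulas $\phi(x^{-1}g,a)$), so one cannot simply rerun the argument on $\phi(x^{-1},a)$ without additional justification.
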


\begin{proof}
Assume that  $\phi(g^{-1}x,a)$ does not fork over $\mathcal{M}_0$ for any $g\in G(\mathcal{M})$. By
Theorem \ref{PIFpunto} every such translate intersects $\mathcal{M}_0$. By
compactness we have that finitely many translates cover
$G(\mathcal{M}_0)$.
\end{proof}

The following will be very useful for us.

\begin{corollary}\label{genericPositive}
Let $G$ be a bounded group definable in Presburger arithmetic, and
let $X$ be a definable subset of $G$. Then $X$ is generic if and
only if $\mu_G(X)>0$.
\end{corollary}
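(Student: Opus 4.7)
The proof splits into two directions; the forward direction is the easy one. Suppose $X$ is generic, so $G = \bigcup_{i=1}^{k} g_i X$ for some $g_1,\dots,g_k \in G$. By the left-invariance and finite subadditivity of $\mu_G$ guaranteed by Corollary \ref{groupdefamenable},
\[
1 \;=\; \mu_G(G) \;\le\; \sum_{i=1}^{k} \mu_G(g_i X) \;=\; k\,\mu_G(X),
\]
so $\mu_G(X) \ge 1/k > 0$.

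For the converse, I would argue the contrapositive: if $X$ is non-generic then $\mu_G(X)=0$. By Corollary \ref{fgenericGeneric}, non-genericity of $X$ provides some $g \in G$ such that the translate $g^{-1}X$, defined by the formula $\psi(x,c) := \phi(g^{-1}x,a)$, forks over $\mathcal{M}_{0}$. Since Presburger arithmetic is NIP, forking coincides with dividing, so there is an $\mathcal{M}_{0}$-indiscernible sequence $(c_i)_{i<\omega}$ with $c_0 = c$ and some integer $k$ such that $\{\psi(x,c_i) : i<\omega\}$ is $k$-inconsistent. Assuming (see the next paragraph) that $\mu_G$ is $\mathrm{Aut}(\mathcal{M}/\mathcal{M}_{0})$-invariant, each set $Y_i := \psi(\mathcal{M},c_i)\cap G$ is an $\mathrm{Aut}(\mathcal{M}/\mathcal{M}_{0})$-image of $g^{-1}X$ and hence satisfies $\mu_G(Y_i) = \mu_G(g^{-1}X) = \mu_G(X)$. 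The $k$-inconsistency means every point of $G$ lies in at most $k-1$ of the $Y_i$, so finite additivity yields, for every $N$,
\[
N \cdot \mu_G(X) \;=\; \sum_{i<N} \mu_G(Y_i) \;\le\; (k-1)\,\mu_G(G) \;=\; k-1.
\]
Letting $N \to \infty$ forces $\mu_G(X) = 0$, contradicting our assumption $\mu_G(X) > 0$.

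The one point that really needs justification is that the invariant probability measure $\mu_G$ from Corollary \ref{groupdefamenable} can be chosen to be simultaneously left-invariant \emph{and} $\mathrm{Aut}(\mathcal{M}/\mathcal{M}_{0})$-invariant. This is the main obstacle. Since $G$ is abelian-by-finite by Theorem \ref{ab-by-finite}, I would first produce such a bi-invariant probability measure on a finite-index abelian subgroup $H$ via a standard Markov--Kakutani averaging argument applied to the $\mathrm{Aut}(\mathcal{M}/\mathcal{M}_{0})$-orbit (in the compact space of finitely additive measures on definable subsets of $H$) of a left-invariant measure on $H$, and then extend to $G$ by averaging over a set of coset representatives. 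The resulting bi-invariant measure is the $\mu_G$ employed in the argument above, which completes the proof.
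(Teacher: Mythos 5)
Your forward direction is the same as the paper's, and the overall shape of your converse (relate positive measure to non-forking of translates, then invoke Corollary \ref{fgenericGeneric}) matches the paper's strategy as well. The difference is that the paper treats the equivalence ``positive measure iff no translate forks over $\mathcal{M}_0$'' as a black box, citing Theorem~1.2 of Chernikov--Simon \cite{ChSi}, whereas you try to prove it directly. It is in that attempted direct proof that there is a genuine gap.

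The gap is the final paragraph: you need a measure on $G$ that is simultaneously left-$G$-invariant and $\mathrm{Aut}(\mathcal{M}/\mathcal{M}_0)$-invariant, and you propose to get one by ``Markov--Kakutani averaging over the $\mathrm{Aut}(\mathcal{M}/\mathcal{M}_0)$-orbit.'' This does not work as stated. Markov--Kakutani applies to a \emph{commuting} family of affine self-maps of a compact convex set, and the version for groups (Day's fixed point theorem) requires the acting group to be amenable. The automorphism group $\mathrm{Aut}(\mathcal{M}/\mathcal{M}_0)$ of a saturated model is neither commutative nor amenable in general, so neither theorem produces a fixed point in the compact convex set of $G$-invariant Keisler measures. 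The existence of a $G$-invariant measure on a definably amenable NIP group that is in addition $\mathcal{M}_0$-invariant for some small $\mathcal{M}_0$ is a genuine NIP theorem --- it goes through generic stability / Borel definability of the measure, or equivalently through the Haar measure on the compact group $G/G^{00}$, not through an averaging argument --- and this is precisely what the cited Theorem~1.2 of \cite{ChSi} packages. A secondary issue is that even if you produce one bi-invariant measure by this route, the statement is about the fixed $\mu_G$ from Corollary \ref{groupdefamenable}; you would additionally need the (true, but also NIP-specific) fact that all $G$-invariant measures on $G$ have the same null sets, which again is part of what \cite{ChSi} supplies. Once you grant their theorem, the rest of your $k$-inconsistency counting argument is fine; the issue is not the counting but the source of the bi-invariant measure.
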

\begin{proof}
If a set is generic, then by invariance and additivity it must
have positive measure. For the converse, by Theorem 1.2 in
\cite{ChSi} we know that a set $X:=\phi(\mathcal{M},a)$  has
positive measure if and only if $\phi(g^{-1}x,a)$ does not fork
for all $g\in G$. By Corollary \ref{fgenericGeneric} this implies
that $X$ is generic, as required.
\end{proof}

\section{Characterizing bounded definable groups in Presburger}\label{sec4}

Let $G$ be any bounded definable group of dimension $n$. Since it
is bounded, we may assume that $G\subseteq \mathcal{M} ^n$, which we will do
throughout this section.

We will need to define generalized parallelograms.

\begin{definition}[Linear Strip]\label{linear strip}
Let $k \in \mathbb{N}$ be such that $k \geq 1$. Let
$q_i:=\frac{s_i}{n_i}$ be rational numbers and let
$f({x}):=\sum_{i=1}^{k} s_{i} \frac{x_{i}}{n_{i}}$, so that for
any $ a$ the $f( a)$ is in the $\mathbb Q$ vector space generated
by the $a_i$'s (so formally not a function in Presburger).
Finally, let $\gamma=\frac{l}{m} d$ with $l,m\in \mathbb Z$ and
$d\in Z$.

By $f({x})<\gamma$ we will understand $cf({x})<k\gamma$ where $c$ is the minimum common multiple of $\{n_i\}_{i\leq k}\cup \{m\}$, so that

\[
cf({x})<c\gamma \Leftrightarrow \sum_{i=1}^{k} s_{i} x_i \frac{c}{n_{i}}<\frac{cl}{m} d
\]
which is a formula in Presburger arithmetic.

\medskip

Given any such function $f$ and any two elements $\gamma_1$ and
$\gamma_2$ such that $\gamma_{2}-\gamma_{1}$ is infinite, we
define the $k$-strip $S_{k}(\gamma_{1},\gamma_{2})$ to be the set
\[
\left\{x= (x_{1},\dots,x_{k}) \in M^{k} \ |   \gamma_{1} \leq f\left({x}\right) \leq \gamma_{2}\right\}.
\]
\end{definition}

Notice that requiring $\gamma_2-\gamma_1$ to be infinite is
equivalent to having equivalently, $m_1l_2d_2-m_2l_1d_1$ to be
infinite where $\gamma_1=\frac{l_1}{m_1} d_1$
$\gamma_2=\frac{l_2}{m_2} d_2$ with $l_1,l_2,m_1,m_2\in \mathbb Z$
and $d_1, d_2\in \M$.

\begin{definition}
We will say that a point $ a$ \emph{lies in the center of a
$K$-strip $S_{k}(\gamma_{1},\gamma_{2})$ } if the distance between
$2f( a)$ and $\gamma_1+\gamma_2$ is finite.
\end{definition}

\begin{remark}\label{BoxinStrip}
Notice that by linearity of the function $f$ defining the strip,
and because we require that $\gamma_2-\gamma_1$ to be infinite,
around any point ${a}$ in the center of the strip we can find an
infinite box around $ a$ contained in the strip.
\end{remark}

\begin{definition} \label{open parallelograms}
An open full $l$-parallelogram is an intersection of $l$ linear strips.

Specifically,
\[
X:=\left\{ \bar x \mid \bigwedge_{i=1}^l \gamma_1^i\leq
f^i\left(\bar x\right)\leq \gamma_2^i\right\}
\]
where $\gamma_1^i, \gamma_2^i$ and $f^i$ satisfy the conditions in
Definition \ref{linear strip} for every $i$.

\bigskip

 An open
$l$-parallelogram is a set $X$ such that
\[
X:=\{{x}=(x_1, \dots, x_l) \mid x_i\equiv_{n_i} c_i \wedge {x}\in  P\}
\] where $P$ is an open full $l$-parallelogram, $0 \leq c_{i}<n_{i}$ are natural numbers and $x_{i}$ is the $i$-th coordinate of $x$.
\end{definition}

The definition allows for degenerate cases: it may be that two
open strips $S_{k}(\gamma_{1}^{i},\gamma_{2}^{i})$ and
$S_{k}(\gamma_{1}^{j},\gamma_{2}^{j})$ are equal.

However, all of the open full $l$-parallelograms we will work with
will be bounded, which will imply that none of this ``degenerate''
cases arise. We will not need to use any of this, which is the
reason we do not make the definition more strict.

\begin{definition} \label{smaller parallelograms}
A full $l$-parallelogram in $M^{n}$ is defined to be the image of
an open full $l$-parallelogram under linear functions, that is, a
set of the form
\[
\left\{\left(f_1\left(a\right), f_2\left( a\right), \dots, f_n\left( a\right)\right) \mid  a \in \mathcal P\right\}
\]
where $f_i\left( a\right)$ is a linear function from $M^k$ for each $i<n$ and
$P$ is an open full $k$-parallelogram.

\medskip

A $k$-parallelogram in $M^n$ is a set  $X$
such that
\[
X:=\{{x}=(x_1, \dots, x_l) \mid x_i\equiv_{n_i} c_i \wedge {x}\in   P\}
\] where $  P$ is an open full $k$-parallelogram, $0 \leq c_{i}<n_{i}$ are natural numbers and $x_{i}$ is the $i$-th coordinate of $x$.
\end{definition}

The definition of full parallelograms
is equivalent to the definition of parallelograms (Definition 3.5) in
\cite{Pantelis}, where the theory is developed in the
context of $Th(\mathbb Q,+,<)$. We will not use their definition, but
we will need some of their more technical results. We will therefore include
their definition and how to ``translate'' between both definitions in Appendix \ref{equivalent definitions}.

In particular, modulo the translation explained in Appendix
\ref{equivalent definitions}, we can repeat the proof of Lemma 3.6
in \cite{Pantelis}, adding the congruences where needed, and get
the following:

\begin{fact}\label{paralelogramo}
Let $W\subseteq \M^n$ be a definable bounded set in Presburger arithmetic.
Then $W$ is a finite union of parallelograms.
\end{fact}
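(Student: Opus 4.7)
The approach is to mimic the proof of Lemma~3.6 in \cite{Pantelis}, which establishes the analogous statement for definable bounded sets over $(\mathbb{Q},+,<)$, and to adapt it by incorporating the congruence conditions of Presburger arithmetic. The first move is to apply the Cell Decomposition Theorem (Fact~\ref{celldecomposition}) to write $W$ as a finite disjoint union of $B$-definable cells; since $W$ is bounded, so is each cell. Hence it suffices to prove that every bounded cell is a finite union of parallelograms.

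I would then induct on the ambient dimension $n$. The base case $n=1$ is immediate, as a bounded $1$-cell $\{\alpha\le x\le\beta\mid x\equiv_{N}c\}$ is already a $1$-parallelogram in the sense of Definitions~\ref{open parallelograms} and~\ref{smaller parallelograms}, and $0$-cells are points. For the inductive step, fix a bounded cell $C\subseteq\M^{n}$ of type $\bar{\imath}=(i_1,\dots,i_n)$ and project it onto the first $n-1$ coordinates to obtain a bounded cell $C'\subseteq\M^{n-1}$. By induction, $C'=\bigcup_{j=1}^{J}P_j$ where each $P_j$ is the image of some open full $l_j$-parallelogram $P_j^{*}\subseteq\M^{l_j}$ under linear maps $(g_1^{j},\dots,g_{n-1}^{j})$. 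I would then lift each $P_j$ to $\M^{n}$ according to the value of $i_n$: if $i_n=0$, the cell above $C'$ is the graph of a linear function $f_n$, and the lift is the image of $P_j^{*}$ under the extended linear map obtained by appending the coordinate $f_n\circ(g_1^{j},\dots,g_{n-1}^{j})$; if $i_n=1$, the cell above $C'$ is of the form $\alpha(x')\le x_n\le\beta(x')$ with a congruence $x_n\equiv_{N}c$, and one enlarges $P_j^{*}$ to an open full $(l_j+1)$-parallelogram by adjoining two new strip conditions that encode these bounds.

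The congruence conditions $x_j\equiv_{N_j}c_j$ are threaded through the induction by letting $N':=\mathrm{lcm}(N_1,\dots,N_n)$, partitioning each cell into sub-cells indexed by residues modulo $N'$, and substituting $y_j:=(x_j-c_j)/N'$ on each piece; this converts the congruences into integrality conditions and preserves linearity of the bounds, matching the way congruences enter the parallelogram definitions.

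The main technical difficulty I expect is the $i_n=1$ lifting: strip bounds in Definition~\ref{linear strip} are constants, whereas cell bounds are genuine linear functions of earlier coordinates. The workaround is to introduce auxiliary linear functions $h_1(a,x_n):=x_n-\alpha\circ g^{j}(a)$ and $h_2(a,x_n):=\beta\circ g^{j}(a)-x_n$. Since $W$ is bounded, one can find some $M\in Z$ bounding both $h_1$ and $h_2$ uniformly on the lifted set, so that $0\le h_i\le M$ define two new strips in $\M^{l_j+1}$. Together with the $l_j$ strips defining $P_j^{*}$, these describe a finite union of open full $(l_j+1)$-parallelograms whose image under the extended linear map is the desired lift, completing the induction.
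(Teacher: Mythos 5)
Your proposal follows the paper's own strategy, which is simply to adapt Lemma~3.6 of Eleftheriou--Starchenko to Presburger arithmetic. Indeed, the paper gives no detailed argument at all: it observes that, after translating between the two notions of parallelogram (Appendix~\ref{equivalent definitions}), one can ``repeat the proof of Lemma 3.6 in \cite{Pantelis}, adding the congruences where needed.'' Your write-up supplies the inductive skeleton that the paper leaves implicit: reduce to a single bounded cell via Fact~\ref{celldecomposition}, induct on the ambient dimension, handle the $i_n=0$ case by appending a coordinate to the defining linear maps, and handle the $i_n=1$ case by introducing the auxiliary affine functions $h_1=x_n-\alpha\circ g^j$ and $h_2=\beta\circ g^j-x_n$ and bounding both by a uniform $M\in Z$ (exactly where the boundedness of $W$ is used). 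This is the correct mechanism for turning a cell with coordinate-dependent bounds into a union of genuine parallelograms, and it is essentially what the cited proof does.

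Two small remarks. First, adjoining two new strip conditions to an $l_j$-fold intersection of strips yields an $(l_j+2)$-parallelogram, not $(l_j+1)$; since the paper explicitly tolerates redundant or repeated strips and the resulting set is bounded, this off-by-one does not affect the argument. Second, Pantelis--Starchenko phrase their lemma in terms of the \emph{closure} of a cell, because over $(\mathbb Q,+,<)$ an open interval is not a parallelogram; in Presburger the distinction vanishes since $a<x<b$ is the same as $a+1\le x\le b-1$, so one may and should work with the cell itself rather than its closure. Worth flagging explicitly, but not a gap. Overall the approach matches the paper's.
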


We will later want to combine Fact \ref{paralelogramo} with Lemma
\ref{SUPERvecindad}, and for this we will need to have a
parallelogram with a dim-generic center.

\begin{definition}
We will say that a point $ a$ lies in the center of a an open $k$-parallelogram $  P$ if it lies in the center of all the linear strips defining $  P$.
\end{definition}

\begin{remark}\label{BoxinParallelogram}
Because intersection of boxes containing $ a$ is a box containing $ a$, Remark \ref{BoxinStrip} implies that around any point ${a}$ in the center of an open $k$-parallelogram $  P$ we can find an infinite box around $ a$ contained in the parallelogram.
\end{remark}

We will want to work with parallelograms with specific centers. A
parallelogram   $ P_{ a}$ will always be a parallelogram with
center $ a$. Given such a parallelogram $P_a$ we will need to use
``octants''.

\begin{definition}\label{def: octant}
Let $P_a$ be an open $l$-parallelogram centered at a point $a$
which will be fixed for this definition. Suppose that $P_a$ is
defined by $l$ equations $f^i\left(\bar x\right)\leq \gamma_2^i$.
Let $\eta\in \{-1, 1\}$. Then the \emph{$\eta$-octant of $P_a$} is
the set defined by equations
\[
f^i(a)\leq f^i\left(\bar x\right)\leq \gamma_2^i
\]
if $\eta(i)=1$ and
\[
\gamma_1^i \leq f^i\left(\bar x\right)\leq f^i(a)
\]
if $\eta(i)=-1$ and
\end{definition}

The following holds.

\begin{lemma}\label{LemmaOctants}
Let $P_a$ be an open $l$-parallelogram centered at $a$, and let
$P_a^1$ be an octant of $P_a$ defined by $\eta$. Then:

\begin{enumerate}
\item Any box containing $a$ intersects $P_a^1$ in a set of
dimension $l$.

\item If $x_1,x_2,x_3\in P_a^1$ and $x_1+x_2+x_3-a-a\in P_a^1$,
then $x_1+x_2-2a\in P_a^1$. Here we are using the coordinate
addition in $\mathcal Z^l$.

\end{enumerate}

\end{lemma}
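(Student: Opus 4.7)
The plan is to reduce both parts to elementary linear computations after centering at $a$. For each linear form $f^i$ defining $P_a$, the octant condition for $\eta(i)=1$ can be rewritten as $0 \le f^i(x)-f^i(a) \le \gamma_2^i - f^i(a)$, and symmetrically for $\eta(i)=-1$. Letting $M^i_\eta$ denote the upper bound on $\eta(i)(f^i(x)-f^i(a))$, the hypothesis that $a$ lies at the center of $P_a$ guarantees that each $M^i_\eta$ is infinite up to a finite discrepancy.

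For part (1), since $P_a$ is a bounded $l$-dimensional subset of $\mathcal{M}^l$, the forms $f^1,\dots, f^l$ must be $\mathbb{Q}$-linearly independent; otherwise $P_a$ would be unbounded in some direction. Solving the rational system $f^i(y)=\eta(i)$ and clearing denominators yields a \emph{standard} integer vector $y^\ast$ (with $\emptyset$-definable entries) such that $\eta(i)f^i(y^\ast)=N$ for some fixed positive integer $N$. For any box $B$ around $a$, the point $a+y^\ast$ lies in $B$ (because $B$ is an infinite box around $a$ while $y^\ast$ is standard) and lies strictly inside $P_a^1$ (because $N$ is finite while each $M^i_\eta$ is infinite). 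Multiplying $y^\ast$ by an appropriate integer further guarantees the congruence conditions defining $P_a^1$ are met. Thus $a+y^\ast$ is an interior point of $B\cap P_a^1$, and applying Lemma \ref{caja} to a maximal-dimension cell of $B\cap P_a^1$ containing $a+y^\ast$ produces an $l$-box, witnessing $\dim(B\cap P_a^1)=l$.

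For part (2), set $u^i_j := \eta(i)(f^i(x_j) - f^i(a))$ for $i\le l$ and $j\in\{1,2,3\}$. By linearity,
\[
\eta(i)\bigl(f^i(x_1+x_2+x_3-2a)-f^i(a)\bigr) = u^i_1+u^i_2+u^i_3,
\]
so the hypotheses yield $u^i_j \in [0,M^i_\eta]$ for each $j$ and $u^i_1+u^i_2+u^i_3\in [0,M^i_\eta]$. From $u^i_3\ge 0$ we deduce $u^i_1+u^i_2 \le M^i_\eta$, and positivity of $u^i_1,u^i_2$ gives $u^i_1+u^i_2 \ge 0$, hence $u^i_1+u^i_2 \in [0,M^i_\eta]$. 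Translated back, this is the required membership for every $i$; viewed in the group $(\mathcal{M}^l, \oplus_a, a)$ with $x \oplus_a y = x+y-a$, the content is that if the $\oplus_a$-sum of three octant elements stays in the octant, then so does any $\oplus_a$-subsum. Congruence conditions are preserved under these integer linear combinations.

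The main obstacle is in part (1), where one must simultaneously place the witness $y^\ast$ inside the given box $B$, inside the octant, and in the correct congruence class; once a good $y^\ast$ has been produced, Lemma \ref{caja} closes the argument. Part (2) is then a direct calculation from the defining inequalities.
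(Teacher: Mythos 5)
Your part (2) is correct and runs on the same idea as the paper's: using linearity of the $f^i$ to translate the octant conditions into the quantities $u^i_j=\eta(i)(f^i(x_j)-f^i(a))\ge 0$, and then deducing $u^i_1+u^i_2\le u^i_1+u^i_2+u^i_3\le M^i_\eta$. You also (correctly) read the conclusion as $x_1\oplus_a x_2=x_1+x_2-a\in P_a^1$, which is what the argument actually gives; the displayed ``$x_1+x_2-2a$'' in the statement is a typo and your $\oplus_a$-interpretation is the intended one.

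Your part (1) has a genuine gap. You produce one witness $a+y^\ast$ with $y^\ast$ a \emph{standard} integer vector, and then claim it is an ``interior point'' of $B\cap P_a^1$ and that Lemma~\ref{caja} gives an $l$-box. Two problems. First, since $\eta(i)\bigl(f^i(a+y^\ast)-f^i(a)\bigr)=N$ with $N$ a fixed standard integer, the slack on the lower octant constraint at $a+y^\ast$ is only $N$; you cannot vary $a+y^\ast$ by an infinite amount in all directions while staying in $P_a^1$, so there is no $l$-box (in the paper's sense, with infinite sides) around $a+y^\ast$ inside $B\cap P_a^1$. What you have shown is $B\cap P_a^1\neq\emptyset$, not that it has dimension $l$. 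Second, the invocation of Lemma~\ref{caja} is circular: that lemma takes an \emph{open $n$-cell} and a \emph{dim-generic} point, and the existence of an $l$-cell inside $B\cap P_a^1$ with a dim-generic point is precisely the statement $\dim(B\cap P_a^1)=l$ that you are trying to prove. The fix is to replace your standard scaling by a nonstandard one: choose $\delta\in\mathcal M$ infinite but smaller than all the box side-lengths and all the $M^i_\eta$ (divided by whatever standard constants arise from the coefficients of the $f^i$). Then the set $\{a+z \mid 0\le \eta(i)f^i(z)\le\delta,\ z\equiv 0\ \text{(appropriate moduli)}\}$ sits inside $B\cap P_a^1$, and since the $f^i$ are $\mathbb Q$-linearly independent this set is the image of the box $[0,\delta]^l$ (with congruences) under a $\mathbb Q$-linear bijection, hence has dimension $l$. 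The paper itself offers only a one-line justification for item (1) via ``linear strips are infinite and definition of center,'' which is the same Remark~\ref{BoxinStrip}/\ref{BoxinParallelogram}-style observation; your version is more explicit but needs the nonstandard scaling to actually close.
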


\begin{proof}
The first item follows by our assumption that linear strips are
infinite and definition of center.

For the second one, assume that $x_1+x_2+x_3-a-a\not\in P_a^1$, so
that for some $i$ one of the equations in the definition of
\ref{def: octant} does not hold. Assume that $\eta(i)=1$ so that
\[
f^i(a)\leq f^i\left(x_1+x_2-a\right)\leq \gamma_2^i
\] does not hold.

Since $x_j\in P_a^1$ we know that $f^i(a)\leq f^i\left(x_j\right)$
and $f^i$ is linear, so $f^i\left(x_1+x_2-a\right)\geq f(a)$. So
$f^i\left(x_1+x_2-a\right)> \gamma_2^i$.

However, by hypothesis $f^i\left(x_1+x_2+x_3-a-a\right)\leq
\gamma_2^i$ so
\[
f^i\left(x_1+x_2+x_3-a-a\right)< f^i\left(x_1+x_2-a\right)
\]
which by linearity implies
\[
f^i\left(x_3\right)< f^i\left(a\right),
\]
a contradiction.
\end{proof}

\bigskip

If $W$ is an $A$-definable bounded set, then
the parallelograms that result in Fact \ref{paralelogramo} will also be $A$-definable. However, we can
find a parallelogram with dim-generic center as follows.

\begin{lemma}\label{paralelogramoGenerico}
Let $  P$ be an $A$-definable bounded full open parallelogram.
Then $P$ is a finite union of parallelograms with dim-generic
centers.
\end{lemma}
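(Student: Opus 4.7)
Write $P$ in the form $\{x\in\M^l : \gamma_1^i \le f^i(x) \le \gamma_2^i,\ 1 \le i \le l\}$, with the $\gamma^i_j$ in $dcl(A)$ and the $f^i$ linearly independent $\mathbb{Q}$-linear forms. The strategy is to split $P$ into the $2^l$ ``octants'' determined by a single dim-generic point $a \in P$, and to choose a dim-generic center for each octant.

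Pick $a \in P$ with $\dim(a/A) = l$, which exists because $\dim P = l$. A first key observation is that for every $i$, both $f^i(a) - \gamma_1^i$ and $\gamma_2^i - f^i(a)$ must be infinite: otherwise $f^i(a)$ would lie in $dcl(A)$, and by linear independence of the $f^j$ this would reduce $\dim(a/A)$ below $l$. For each $\eta \in \{-1,+1\}^l$, define $P_\eta$ to be the sub-parallelogram obtained by replacing the lower endpoint of the $i$-th strip of $P$ by $f^i(a)$ when $\eta(i) = +1$, and replacing the upper endpoint by $f^i(a)$ when $\eta(i) = -1$; that is, $P_\eta$ is the $\eta$-octant of Definition \ref{def: octant}. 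The preceding observation ensures that each $P_\eta$ is a bounded full open $l$-parallelogram, and clearly $P = \bigcup_\eta P_\eta$, since any $x \in P$ lies in the $P_\eta$ indexed by the signs of the $f^i(x) - f^i(a)$.

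It remains, for each $\eta$, to produce a center $c_\eta \in P_\eta$ with $\dim(c_\eta / A) = l$. Setting $\gamma_\eta^i := \gamma_2^i$ if $\eta(i) = +1$ and $\gamma_\eta^i := \gamma_1^i$ otherwise, the desired midpoint of $P_\eta$ in $f$-coordinates is $m_\eta := \bigl((f^i(a) + \gamma_\eta^i)/2\bigr)_i$, and $c_\eta$ is a center precisely when each $|2f^i(c_\eta) - f^i(a) - \gamma_\eta^i|$ is finite. Because the $l \times l$ rational matrix $F$ of the $f^i$ is invertible and has bounded denominators, the image $F(\M^l)$ has finite index in its $\mathbb{Q}$-span, so some $c_\eta \in \M^l$ has $f(c_\eta)$ within standard distance of $m_\eta$; and since $m_\eta$ lies in the interior of every strip of $P_\eta$, such a $c_\eta$ can be taken inside $P_\eta$. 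Writing $c_\eta = \tfrac12(a + F^{-1}\gamma_\eta) + \varepsilon$ with $\varepsilon$ having standard-bounded coordinates, the elements $a$ and $c_\eta$ are interdefinable over $A$, so $\dim(c_\eta / A) = \dim(a/A) = l$, and $c_\eta$ is the required dim-generic center.

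The main obstacle is this last paragraph: producing an integer point $c_\eta$ whose $f$-image lies within standard distance of the (possibly non-integer) midpoint $m_\eta$, and then checking that this choice preserves dim-genericity over $A$. Both points rest on invertibility of $F$ over $\mathbb{Q}$ together with the finite index of $F(\M^l)$ in its $\mathbb{Q}$-span; everything else reduces to routine bookkeeping of strip endpoints.
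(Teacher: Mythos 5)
Your proof is correct and follows essentially the same approach as the paper: cut $P$ into $2^l$ octants using hyperplanes parallel to the defining strips and passing through a dim-generic point $a$, then exhibit a dim-generic center for each octant (the paper phrases this as ``dim-generic translates of the defining hyperplanes,'' but the content is the same). You also fill in a detail the paper leaves implicit, namely the existence of a genuine center $c_\eta\in\M^l$ near the rational midpoint; the only blemish is the phrase ``finite index in its $\mathbb Q$-span,'' which cannot be literally true since a $\mathbb Q$-vector space is divisible and has no proper finite-index subgroups -- what you mean, and what the argument uses, is that $F(\M^l)$ has finite index in $\tfrac1N\M^l$ for a suitable integer $N$ clearing denominators.
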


\begin{proof}
Let $  P$ be a bounded full $A$-definable parallelogram and let $n$ be its dimension,
so that $  P$ is defined by $n$ (non redundant) linear strips, all
definable over $A$. Now, we can cut $  P$ with $n$ hyperplanes
each a parallel to the hyperplanes defining each of the linear
strips (these are bounded by $A$-definable parallel hyperplanes),
and we can take each of these hyperplanes to be a dim-generic
translate of those defining the linear strips.
We end up with $2^n$ parallelograms,
each with dim-generic center, as required.
\end{proof}

\subsection{Characterizing bounded groups definable in Presburger arithmetic}\label{sub3.3}

By cell decomposition, we can decompose $G\times G$ in cells such
that the group multiplication and inversion are linear functions in
each of the cells. Because the non-generic sets from an ideal (Corollary
\ref{acotadoIdeal}) we know that at least one of these cells, say
$W$, is generic in $G\times G$.

\begin{lemma}\label{cajaGenerica}
Let $G$ be a bounded group definable in Presburger arithmetic, and
let $W$ be a generic cell in $G\times G$. There are $U,V$ generics
in $G$ such that $U\times V\subseteq W$.
\end{lemma}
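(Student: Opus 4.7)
Since $W$ is a generic cell in $G\times G$, and any definable set of dimension less than $\dim(G\times G)=2n$ is covered by finitely many translates only of dimension $<2n$ (so cannot cover $G\times G$), $W$ must have full dimension $2n$; hence $W$ is an open $2n$-cell. Moreover, by Corollary \ref{genericPositive}, $\mu_{G\times G}(W)>0$.

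My plan is to extract $U$ and $V$ from a box sitting inside $W$ placed at a suitably generic realization. Since non-generic sets form an ideal (Corollary \ref{acotadoIdeal}), the generic definable subsets of $G\times G$ form a filter on the Boolean algebra of definable subsets, so the formula ``$(x,y)\in W$'' extends to a complete generic type $r$ of $G\times G$ over $\mathcal M$ by a standard compactness argument. Realize $r$ by $(a,b)$ in the saturated model $\mathcal M$; then $(a,b)\in W$, and $r$ must have full dimension $2n$ (any definable set in $r$ of dimension $<2n$ would be non-generic, a contradiction), so $(a,b)$ is dim-generic in $W$.

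Now apply Lemma \ref{caja} to the open $2n$-cell $W$ at the dim-generic point $(a,b)$, producing a $2n$-box $B=J_1\times\cdots\times J_{2n}\subseteq W$ around $(a,b)$; the saturation argument in that lemma's proof lets us choose the endpoints of the $J_k$ inside $\mathcal M$, so $B$ is $\mathcal M$-definable. Split $B$ into $U:=J_1\times\cdots\times J_n$ and $V:=J_{n+1}\times\cdots\times J_{2n}$; since $B\subseteq G\times G$ we have $U,V\subseteq G$, and evidently $U\times V=B\subseteq W$.

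The crux, where I expect the main care to be needed, is showing that $U$ and $V$ are generic in $G$. I plan to verify this by checking that the projections $r_1,r_2$ of $r$ onto the first $n$ and last $n$ coordinates are themselves generic types of $G$: any $\mathcal M$-formula $\phi(x)\in r_1$ corresponds to the $\mathcal M$-definable subset $\phi(G)\times G\subseteq G\times G$, which lies in $r$ and hence is generic, so finitely many translates $(g_i,h_i)\cdot(\phi(G)\times G)$ cover $G\times G$; projecting onto the first coordinate gives $\bigcup_i g_i\phi(G)=G$, so $\phi(G)$ is generic in $G$. Since $U$ is $\mathcal M$-definable and contains $a\models r_1$, the formula ``$x\in U$'' lies in the generic type $r_1$, which makes $U$ generic in $G$; symmetrically $V$ is generic, completing the argument.
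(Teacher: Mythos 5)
Your argument takes a genuinely different route from the paper's, but it has a gap at the key step. You build $r$ as a complete generic type of $G\times G$ over all of $\mathcal M$, and then want to realize it by $(a,b)$ lying \emph{in} $\mathcal M$ and to run Lemma~\ref{caja} so that the resulting box has endpoints in $\mathcal M$. These two wishes are incompatible: a non-algebraic complete type over $\mathcal M$ is never realized inside $\mathcal M$, so $(a,b)$ lives in a proper elementary extension $\mathcal N\succ\mathcal M$, and the saturation used in the proofs of Lemmas~\ref{cajaendos} and~\ref{caja} realizes types whose parameters include $a$ and $b$; the endpoints of the resulting box therefore lie in $\mathcal N$, not in $\mathcal M$. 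Hence $U$ and $V$, as you build them, are $\mathcal N$-definable rather than $\mathcal M$-definable, and the statement ``$x\in U$ lies in $r_1$'' collapses. If instead you take $r$ to be a generic type over $\mathcal M_0$ (so that $(a,b)\in\mathcal M$), the dual problem appears: the box endpoints are then in $\mathcal M\setminus\mathcal M_0$, so $U$ is not a formula of $r_1\in S(\mathcal M_0)$, and $a\in U$ does not yield $U\in r_1$. What is really missing is an argument that \emph{some} box around $(a,b)$ inside $W$ belongs to the generic type $r$; Lemma~\ref{caja} only guarantees an infinite box, and an infinite box around a dim-generic point can still be too small at the scale of $G$ to be group-generic.

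The paper avoids all of this by a short measure-theoretic argument: $G\times G$ is amenable with the product measure $\mu_{G\times G}(W)=\sup\sum_i\mu_G(U_i)\mu_G(V_i)$, the supremum ranging over finite families of pairwise-disjoint definable rectangles $U_i\times V_i\subseteq W$. By Corollary~\ref{genericPositive}, $W$ generic is equivalent to $\mu_{G\times G}(W)>0$, which by the very definition of the product measure produces a definable rectangle $U\times V\subseteq W$ with $\mu_G(U)>0$ and $\mu_G(V)>0$, i.e.\ (again by Corollary~\ref{genericPositive}) with $U$ and $V$ generic. Your observations that $W$ must be an open $2n$-cell and that the coordinate projections of a generic type of $G\times G$ are generic types of $G$ are both correct, but by themselves they do not supply a generic rectangle inside $W$.
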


\begin{proof}
It is well known (see for example \cite{AmenableGroups}) that if
$G$ is an amenable group, then so is $G\times G$ equipped with the
following product measure $\mu_{G\times G}$:

For any $W\in G$ define {\small{
\[
\mu_{G\times G}(W)=sup\left\{\sum_{i\leq n}
\left(\mu_G\left(U_i\right)\mu_G\left(V_i\right)\right)\mid \left(
U_i\times V_i\subseteq W \right)\wedge \left(U_i\times V_i\cap
U_j\times V_j=\emptyset\right) , \  n \in \mathbb{N} \right\}.
\]}}

By Corollary \ref{genericPositive} we know that $W$ is generic if
and only if $\mu_{G\times G} (W)$ is positive, and by definition of the product measure
the result follows immediately.
\end{proof}

Recall that $x\otimes_a y:=xa^{-1}y$ and $x\oplus_a y:=x-a+y$. We
will also use the maximum norm. So for $x=(x_{1},\dots,x_{n}) \in
\mathcal{M}^{n}$ we define $|x|:=\max \{|x_i| \ | \ i \leq n\}$
where $|x_i|$ is the usual absolute value. This is clearly
definable in Presburger arithmetic.

\begin{theorem}\label{supersupervecindad}
Let $G\subseteq \M^n$ be a bounded abelian group definable in a model $\M$ of Presburger arithmetic of dimension $n$. Then there is a
generic parallelogram $P_a\subseteq G$ centered in $a\in G$ such that
$G$-addition centered in $a$ coincides with the usual
$\M^n$-addition centered in $a$.
\end{theorem}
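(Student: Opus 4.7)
The plan is to combine the parallelogram decomposition of bounded definable sets with the local linearization of the group operation (Lemma \ref{SUPERvecindad}) to build the required generic parallelogram. First I would find a generic parallelogram $P_a$ centered at a dim-generic point $a \in G$, and then extend the local equality $x \otimes_a y = x \oplus_a y$ from a small box around $a$ to all of $P_a$, using the piecewise linearity of the group operation.

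For the first step, since $G$ is bounded, Fact \ref{paralelogramo} decomposes $G$ into a finite union of parallelograms. By Corollary \ref{acotadoIdeal}, the non-generic definable subsets of $G$ form an ideal, so at least one of these parallelograms is generic. Applying Lemma \ref{paralelogramoGenerico} to that generic parallelogram produces a generic parallelogram $P_a$ with dim-generic center $a$. Since $a$ is dim-generic, Lemma \ref{SUPERvecindad} yields a box $B_a$ around $a$ (which, by intersecting with $P_a$ via Remark \ref{intersectionboxes}, I may assume sits inside $P_a$) on which $x \otimes_a y = x \oplus_a y$ for all $x, y \in B_a$.

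The core step is to promote this equality from $B_a \times B_a$ to $P_a \times P_a$. I would apply cell decomposition to $G \times G$, refined so that the group multiplication $(x,y) \mapsto x \cdot y$, and hence $(x,y) \mapsto x \otimes_a y = x \cdot a^{-1} \cdot y$, is linear (in the matrix sense of Remark \ref{matriznot}) on each cell. Let $D_0$ be the cell containing $(a,a)$. Then $B_a \times B_a \subseteq D_0$ has full dimension $2n$, and both functions $(x,y) \mapsto x \otimes_a y$ and $(x,y) \mapsto x \oplus_a y = x - a + y$ are linear on $D_0$. Since these two linear functions agree on the full-dimensional subset $B_a \times B_a$, they agree on all of $D_0$.

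It remains to arrange that $P_a \times P_a \subseteq D_0$ while keeping $P_a$ generic in $G$. Since $(a,a)$ is a dim-generic point of the full-dimensional cell $D_0$, Lemma \ref{caja} applied to the product structure of $G \times G$ produces a box $B_a^1 \times B_a^2 \subseteq D_0$ around $(a,a)$; taking $Q = B_a^1 \cap B_a^2$ gives a box around $a$ in $G$ with $Q \times Q \subseteq D_0$. I would then incorporate $Q$ into the parallelogram decomposition: intersect the generic parallelogram from the first step with $Q$ and reapply Fact \ref{paralelogramo} together with the ideal property to extract a generic sub-parallelogram $P_a \subseteq Q$ still centered at the dim-generic $a$. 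The main obstacle is precisely this last point — guaranteeing that after restricting to a single cell $D_0$ of the $G \times G$ decomposition, the remaining piece around $a$ contains a \emph{generic} parallelogram of $G$. Handling this requires carefully coordinating the cell decomposition of $G \times G$ with the parallelogram decomposition of $G$, exploiting positivity of $\mu_G$ on $Q$ together with the dim-genericity of $a$ to extract a component of positive measure with $a$ as a dim-generic center.
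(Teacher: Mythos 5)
The proposal's key step rests on two claims that are both unjustified and, in fact, false in general.

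First, the proposal asserts that the cell $D_0$ of the $G\times G$ decomposition containing $(a,a)$ contains $B_a\times B_a$ and has full dimension $2n$, and later that ``$(a,a)$ is a dim-generic point of the full-dimensional cell $D_0$.'' But $\dim(a,a)\le n$ over the relevant parameters, while a full-dimensional cell in $G\times G$ has dimension $2n$; the diagonal point $(a,a)$ cannot be dim-generic in any $2n$-cell. Consequently Lemma~\ref{caja} does not apply to produce a box around $(a,a)$ inside $D_0$, and there is no guarantee that $D_0$ contains $B_a\times B_a$ or has full dimension at all. This is exactly the obstacle that forces Lemma~\ref{vecindadcool} in the paper to route through two auxiliary \emph{independent} dim-generics $a_1, a_2 = a a_1^{-1}$ rather than working directly at $(a,a)$.

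Second, even setting aside the dim-genericity issue, the proposal needs $P_a\times P_a$ to be contained in a \emph{single} cell $D_0$ of the $G\times G$ decomposition while $P_a$ remains generic in $G$. Cells of $G\times G$ for the product map are not of the form ``$C_1\times C_2$,'' and a generic parallelogram is large; its square will generically meet many cells, on which $\cdot$ is given by \emph{different} linear formulas, so the argument ``the two linear maps agree on a full-dimensional subset, hence on all of $D_0$'' does not propagate across cell boundaries. You recognize this as the ``main obstacle,'' but you propose resolving it by further coordination of decompositions and measure-positivity; that coordination does not exist in general, and the issue is structural rather than technical.

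The paper's proof is genuinely different in how it bridges from a box to a generic parallelogram. It never tries to fit $P_a\times P_a$ into one cell. Instead it finds \emph{generic} cells $U, V$ via Lemma~\ref{cajaGenerica} on which $\cdot$ is linear, takes a generic parallelogram $P_a\subseteq U$ with dim-generic center, and proves the crucial ``asymmetric'' Lemma~\ref{LemmaBoxAndFinite}: $x\otimes_a y = x\oplus_a y$ whenever $y$ lies in the small box $B_a$ but $x$ is allowed to range over all of $U$. It then propagates that equality across all of $P_a$ by two inductive bootstrap arguments that have no counterpart in your proposal: the octant argument (Claim in the proof, using Lemma~\ref{LemmaOctants} and a minimality/descent on $|y-a|$) and the halving argument (passing to $P_a/2$, writing $x = x'\oplus_a x'\oplus_a v_x$ and using abelianity). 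These steps exploit the additive $\oplus_a$-structure of Presburger itself, not a further cell decomposition. If you want to salvage your approach, you would need an argument replacing these bootstrap steps, since no single cell of the $G\times G$ decomposition can be expected to absorb a generic $P_a\times P_a$.
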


\begin{proof}
Let $a:=( a_i)$.

By cell decomposition and Lemma \ref{cajaGenerica} there are
generic cells $U,V\in G$ such that $G$-addition in $U\times V$ is
given by linear functions. By Fact \ref{paralelogramo} we may find
a generic parallelogram $P\subseteq U$. By Lemma
\ref{paralelogramoGenerico} $P$ is a finite union of
parallelograms with dim-generic center, so at least one of these,
say $P_{a}$, is also generic. In a similar way, we can find $b\in
V$ and a generic parallelogram $Q_b\subseteq V$ such that $b$ is a
$\{a\}$-dim-generic center of $Q_b$.

\begin{claim}
There is a box $B_a$ centered around $a$ such that the following hold:

\begin{itemize}
\item $B_a \subseteq U$.

\item $B_a\subseteq a V  b^{-1}$.

\item For any $ {x},{y} \in B_{{a}}$, we have that $ \
{x}  {a}^{-1}  {y}=
{x} - {a} + {y}$.
\end{itemize}
\end{claim}

\begin{proof}
Let $f$ be the linear function on $P_a$ given by $f(x)=x b$ and $g$ be the linear function on $Q_b$ where $g(y)=a y$. The image of parallelograms under
linear functions is a parallelogram, so both $f(P_a)$ and $g(Q_b)$ are generic parallelograms centered in $a b$. By definition of center, there is a box $B_{ab}$ centered in $a b$ contained in both $f(P_a)$ and $g(Q_b)$, so that in particular
\[
B_{ab}\subseteq g(Q_b)\cap f(P_a)\subseteq a V\cap U b.
\] Then $f^{-1}(B_{ab})$ is a parallelogram centered in $a$, contained in $U\cap a V  b^{-1}$. Once again, by definition of center there is a box $B'_a$ centered in $a$ with
$B'_a\subseteq f^{-1}(B_{ab})\subseteq a  V  b^{-1}$.

Let $B''_{{a}}$ be a box around ${a}$ given by Lemma \ref{SUPERvecindad},
so that  for all $ {x},{y} \in B''_{{a}}$, $ \
{x}  {a}^{-1}  {y}=
{x} - {a} + {y}$.  The box $B_a=B''_a\cap B'_a$ is a box around $a$ which will satisfy all the properties of the claim.
\end{proof}

By hypothesis, $x y=A_{{c}}(x)+C_{{d}}(y)+\beta$ where ${c}, {d}$
are constant tuples of standard elements in $\mathbb Z$ and
$\beta$ is a tuple of constants, according to the definition given
in \ref{matriznot}. For notation purposes we will assume that
${c},{d}$ and $\beta$ are all tuples of $0$'s. The general case is
exactly analogous but the notation gets significantly messier. So
we will assume that in $U\times V$ we have $x y=Ax+Cy$ where $A$
and $C$ are matrices in $GL_n(\mathbb Q)$.

\begin{claim} Let $y\in U$ be such that $y  b\in a V$ then
\[a^{-1} y  b=C^{-1}Ay+b-C^{-1}Aa\]
and $a^{-1} y b\in V$. \end{claim}

\begin{proof}
Assume that $y b \in a  V$. Then $a^{-1} y  b \in V$. Since the
group operation over $U \times V$ is linear, we have
$\displaystyle{y b= Ay+Cb}$ and $\displaystyle{a  (a^{-1} y
b)}=Aa+C(a^{-1} y  b)$. Clearly, $a (a^{-1} y b)=y b$, thus
$\displaystyle{a^{-1} y b=C^{-1}Ay+b-C^{-1}Aa}$.
\end{proof}

\begin{lemma}\label{LemmaBoxAndFinite}
For all $y$ in $B_a$ and $x$ in $U$, if $x\oplus_a y\in U$ then
$x\otimes_a y=x\oplus_a y$.
\end{lemma}

\begin{proof} Since $B_a\subseteq U\cap aVb^{-1}$ we can apply the previous claim, so that for all $x$ in $U$ we have
that
\[xa^{-1}yb=Ax+C(C^{-1}Ay+b-C^{-1}Aa)=Ax+Ay-Aa+Cb.\]
It follows that if $x-a+y$ is in $U$ we have
$(x-a+y)b=Ax+Ay-Aa+Cb=xa^{-1}yb$ so $x-a+y=xa^{-1}y$, as required.
\end{proof}

We will now fix an octant $P_a^1$ of $P_a$ defined by $\eta^1\in
\{-1,1\}^n$.

\begin{claim}\label{claim half}
For all $x,y\in P_a^1$ if $x\opa y\in P_a^1$ then $x\ota y=x\opa y$.

So for all $x\in P_a$, if $x\opa x\in P_a$ we have $x\opa x=x\ota x$.
\end{claim}

\begin{proof}
Let
\[X_a:=\left\{y\in P_a^1 \mid \forall x\in P_a^1,\  \left(\left(x\opa
y\right)\in P_a^1\right)\Rightarrow x\ota y=x\opa y\right\}.\]

By lemma \ref{LemmaBoxAndFinite}, $X_a$ contains $B_a\cap P_a^1$, and so it is a subset of $P_a^1$ of
dimension $n$. We will show that it is closed under $\oplus_a$.

To prove this, let $y_1,y_2\in X_a$ and let $x$ be an element in
$P_a^1$ such that $x\opa(y_1\ota y_2)\in P_a^1$. We know that
$y_1\ota y_2=y_1\opa y_2$ by definition of $X_a$ so that $x\opa
(y_1\opa y_2)\in P_a^1$. By Lemma \ref{LemmaOctants} this implies
that $x\opa y_1\in P_a^1$, and since $y_1\in X_a$ we have $x\ota
y_1=x\opa y_1$. We can apply the definition of $X_a$ with $y=y_2$
and $x\opa y_1$ instead of $x$ and we get
\[
x\opa (y_1\opa y_2)=(x\opa y_1)\opa y_2=(x\ota y_1)\opa y_2=x\ota y_1\ota y_2=x\ota (y_1\opa y_2).
\]
It follows that $y_1\opa y_2\in X_a$. So $X_a$ is a subset of the
octant $P_a^1$ containing $B_a\cap P_a^1$ and closed under
$\opa$-vector addition as long as we stay inside the octant
$P_a^1$.

If $X_a\neq P_a^1$, let $y:=( y_i)$ be an element in
$\P_a^1\setminus X_a$ such that
\[
|y-a|=\sum_i |y_i-a_i|=\sum_i \eta(i)(y_i-a_i)
\]
is as small as possible. Because $B_a\cap P_a\in X_a$ we know that
$|y_k-a_k|$ is infinite for some $k$. Let $e_k$ be the vector with
$0$ in all its coordinates for $i\neq k$ and $e_k=\eta(k)$.
Because $P_a^1$ is defined by $\eta$, we have that
$|(y-e_k)-a_i|\in P_a^1$.

But
\[ |(y-e_k)-a|=\sum_i |(y_i-e_k)-a_i|=\sum_i \eta(i)((y_i-e_k)-a_i)
 =\sum_i \eta(i)(y_i-a_i)-1.\]

By minimality, $y-e_k\in X_a$. But $a+e_k\in B_a\cap P_a\subset
X_a$ and $y=(y-e_k)\opa a+e_k)$, so so $y\in X_a$, a
contradiction. This implies that $P_a=X_a$, and the claim follows.
\end{proof}

We will now restrict ourselves to $P_a/2:=\{x\mid x\opa x\opa
v_x\in P_a\}$ for some $v_x\in B_a$.

\bigskip

\begin{claim}
Given any $x\in P_a/2$ and any $z\in P_a$, if $x\oplus z\in P_a$
we have $x\ota z=x\opa z$.
\end{claim}

\begin{proof}
Let \[X:=\left\{x=( x_i)\in P_a \mid \forall z=( z_i)\in P_a, \
\left(\left(x\opa z\in P_a\right)\wedge \left(\bigwedge_i
\left(|x_i-a_i|>|z_i-a_i|\right)\right)\right)\Rightarrow x\ota
z=x\opa z\right\}.\] It is enough to show that $P_a/2\subseteq X$.

Suppose that $P_a/2\not\subseteq X$, and let $x\in P_a/2$ and
$z\in P_a$ with $|z_i-a_i<|x_i-a_i|$, $x\opa z\in P_a$ and $x\ota
z\neq x\opa z$. We may take such $x$ such that $|x-a|$ is the
smallest possible.

Note that since we have an infinite cube inside $B_a$ such cube
must be a subset of $X$ by Lemma \ref{LemmaBoxAndFinite} $|x-a|$
is therefore infinite.

Now, let $z'=( z_i'), x'=( x_i')$ in $P_a$ and $v_z, v_x$ in $B_a$
be such that $z=z'\opa z'\opa v_z$ and $x=x'\opa x'\opa v_x$.

We can choose the above (adding elements in $B_a$ to $v_x$ and
$v_z$ if needed) so that $|x'_i-a_i|>|z'_i-a_i|$.

By definition of $P_a$ we know that because $x\oplus z\in P_a$
then $x'\opa z'\in P_a$: If $f$ is $\mathbb Q$-linear, $|v-a|$ is
finite, and $\gamma-f(a)$ infinite, then
\[f(w'\opa
w'\opa v)=f(w')+f(w')-f(a)+f(v-a))\leq \gamma ;\] But $f(v-a)$
would be finite so
\[2f(w')<\gamma+f(a)-f(v-a)<2\gamma\]
and $f(w')<\gamma$.

By minimality of $x$ we have $x'\in X$ and $x'\opa z'=x'\ota z'$.
Recall that Claim \ref{claim half} if $w\opa w\in P_a$ then
 $w\opa w=w\ota w$ for any $w$. So $(x'\opa z')\ota (x'\opa
z')=(x'\opa z')\opa (x'\opa z')$, $x'\opa x'=x'\ota x'$ and
$z'\opa z'=z'\ota z'$. Finally, combining this with the last
statement of Lemma \ref{LemmaBoxAndFinite} we have $x=x'\ota
x'\ota v_x$ and $z=z'\ota z'\ota v_z$.

Using the above (and abelianity of $\opa $ and $\ota$) we have:

\begin{align*}
x\opa z=&
x'\opa x'\opa v_x\opa z'\opa z'\opa v_z\\
=& [(x'\opa z')\opa (x'\opa z')]\opa v_x\opa v_z\\
=& [(x'\opa z')\ota (x'\opa z')]\opa v_x\opa v_z\\
=& [(x'\ota z')\ota (x'\ota z')]\opa (v_x\opa v_z)\\
=& [(x'\ota z')\ota (x'\ota z')]\ota (v_x\opa v_z) \text{ By Lemma \ref{LemmaBoxAndFinite},  $v_x\opa v_z\in B_a$}\\
=& [(x'\ota z')\ota (x'\ota z')]\ota (v_x\ota v_z) \text{ again, $v_x, v_z\in B_a$}\\
=& x'\ota x'\ota v_x\ota z'\ota z'\ota v_z\\
=& x\ota z,
\end{align*}
a contradiction.
\end{proof}

It follows both that $x\opa y=x\ota y$ for all elements in
$P_a/2$ and that
$P_a/2$ is generic in $G$ (we can get finitely close to any point of $P_a$
with finitely many $\opa$-translates of $P_a/2$ by elements of $P_a/2$). So the Theorem \ref{supersupervecindad} follows.
\end{proof}

\bigskip

By Theorem \ref{ab-by-finite} any group $(G, \cdot,e)$ definable in
Presburger arithmetic is abelian-by-finite. We will
conclude this paper by characterizing all bounded abelian groups $(G,
\oplus,e)$ definable in Presburger arithmetic.

\begin{definition}\label{locallattice} Let $B \subseteq Z^{k}$ be a box around $0$. Let $\mathcal{B}$ be the $\bigvee$-definable group
$ \displaystyle{\bigcup_{n \in \mathbb{N}} nB}$
where $nB:=\{ b_1+b_2+\dots+b_n \mid b_i\in B\}$
with the natural additive structure $(\mathcal{B},+,0)$.

We define a \emph{local $B$-lattice $\Lambda$ over $\mathcal{B}$} to be a subgroup $\Lambda$ such that $\forall \lambda \in \Lambda$ we have
$(\lambda+B) \cap \Lambda= \{ \lambda\}$.

\end{definition}

\begin{theorem}\label{THEOREM}
Let $(G, \cdot,e)$ be any bounded group definable in
Presburger arithmetic. Then there is an abelian
finite subgroup $G_0$ of $G$ of finite index, a finite integer $k$,
an infinite open box $B\subseteq \M^k$ centered at $0$
and a local $B$-lattice $\Lambda$ in $\M^k$ such that $G_0$ is
definably isomorphic to $\mathcal{B}/\Lambda$.
\end{theorem}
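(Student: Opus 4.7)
The plan is to reduce to an abelian bounded group whose operation coincides with $\M^n$-addition on a generic parallelogram (using Theorems \ref{ab-by-finite} and \ref{supersupervecindad}) and then realize it explicitly as $\mathcal{B}/\Lambda$ via a Steinitz-type reordering of partial sums. First, by Theorem \ref{ab-by-finite}, $G$ contains an abelian definable subgroup $G_0$ of finite index; since $G$ is bounded, so is $G_0$, and $G_0$ is the subgroup promised by the statement. It therefore suffices to exhibit the box, lattice, and isomorphism for $G_0$. Write $n:=\dim G_0$ and view $G_0\subseteq \M^n$. Theorem \ref{supersupervecindad} produces a generic parallelogram $P_a\subseteq G_0$ centered at $a\in G_0$ on which $\ota$ coincides with $\M^n$-addition centered at $a$. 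Composing the definable isomorphism $(G_0,\cdot,e)\cong (G_0,\ota,a)$ from the proof of Theorem \ref{ab-by-finite} (given by $x\mapsto x\cdot a$) with the translation $x\mapsto x-a$, I obtain a definably isomorphic group $(H,\star,0)$ with $H:=G_0-a\subseteq \M^n$, whose operation $\star$ coincides with ordinary $\M^n$-addition on $P_0:=P_a-a$ (which is generic in $H$ and centered at $0$).

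Using Remark \ref{BoxinParallelogram} I pick an infinite symmetric open box $B$ around $0$ inside $P_0$ (symmetrising by $B\cap(-B)$ if needed: because $0\in B$ forces the congruence offsets to vanish, the intersection remains a box). The size of $B$ must be calibrated so that two conditions hold: first, $mB\supseteq H$ in $\M^n$ for some standard $m$, so that $\mathcal{B}=\bigcup_{m\in\mathbb{N}}mB$ contains $H$; and second, $c_n B\subseteq P_0$, where $c_n$ denotes the Steinitz constant for $\M^n$. Both conditions can be arranged simultaneously because $P_0$ is generic in the bounded set $H$, so the ``radius'' of $P_0$ is within a standard factor of that of $H$, and $B$ may be chosen proportional to $P_0$ by a standard constant. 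I then define $\pi\colon \mathcal{B}\to H$ by
\[
\pi(b_1+\cdots+b_m):=b_1\star\cdots\star b_m\qquad (b_i\in B).
\]

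The main obstacle is the well-definedness of $\pi$. Using $B=-B$ and abelianity of $(H,\star)$, this reduces to showing that whenever $b_1+\cdots+b_m=0$ in $\M^n$ with $b_i\in B$, one has $b_1\star\cdots\star b_m=0$ in $H$. To prove this I invoke a Steinitz-type reordering in $\M^n$: since the $b_i$ lie in the box $B$ and sum to $0$, there is a permutation $\sigma$ such that every partial sum $b_{\sigma(1)}+\cdots+b_{\sigma(k)}$ lies in $c_n B\subseteq P_0$. Because $\star=+$ on $P_0$, an induction on $k$ gives $b_{\sigma(1)}\star\cdots\star b_{\sigma(k)}=b_{\sigma(1)}+\cdots+b_{\sigma(k)}$ for every $k\le m$, and the case $k=m$ yields $0$. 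The main technical hurdle will be verifying such a Steinitz-style reordering inside Presburger with a standard constant $c_n$ depending only on $n$.

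Once $\pi$ is well-defined it is easily seen to be a definable group homomorphism. Surjectivity (in fact $\pi|_H=\mathrm{id}_H$) follows from $mB\supseteq H$ together with the same Steinitz argument applied to any decomposition $h=b_1+\cdots+b_m$. The local $B$-lattice property of $\Lambda:=\ker\pi$ is immediate: if $\lambda,\lambda+b\in\Lambda$ with $b\in B$, then $0=\pi(\lambda+b)=\pi(\lambda)\star\pi(b)=\pi(b)=b$, using $\pi|_B=\mathrm{id}_B$, so $(\lambda+B)\cap\Lambda=\{\lambda\}$. The first isomorphism theorem then gives $G_0\cong H\cong\mathcal{B}/\Lambda$, as required.
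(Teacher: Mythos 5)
Your overall strategy parallels the paper's (reduce to the abelian case, invoke Theorem~\ref{supersupervecindad} to get a generic parallelogram where $\otimes_a$ coincides with $+$, pick a box $B$, extend $f|_B$ additively to $\mathcal{B}$, and take $\Lambda$ to be the kernel). The essential divergence is in the well-definedness argument. You invoke a Steinitz-type rearrangement: given $b_1+\cdots+b_m=0$ with $b_i\in B$, find a permutation with all partial sums in $c_n B\subseteq P_0$. This is a plausible route, and the transfer is in principle available --- for each fixed $m$ the Steinitz statement is a first-order sentence in the language of ordered abelian groups that holds in $(\mathbb{Z},+,<)$ (inheriting it from $\mathbb{R}^n$), hence in every model of Presburger; and the congruence part of $c_nB$ is automatic since sums of elements $\equiv_{N_i}0$ stay $\equiv_{N_i}0$. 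But you explicitly leave this as ``the main technical hurdle,'' and it is genuinely the crux: without it the map is not defined. The paper avoids Steinitz entirely by an elementary divisibility trick (Claim~\ref{well defined}): decompose each $x_i=w_i+z_i$ with $z_i\in\mathbb{Z}^k$ and every coordinate of $w_i$ divisible by $nN_j$, then divide by $n$ inside the highly divisible part so that $\frac{w_i}{n},\frac{Z}{n}\in B$ and rearrange using abelianity. That argument stays inside Presburger with no appeal to an external rearrangement theorem and is worth internalizing.

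There is also a concrete error in your surjectivity step. You assert that $B$ can be calibrated so $mB\supseteq H$ in $\M^n$ for a standard $m$, and deduce $\pi|_H=\mathrm{id}_H$. This fails because of congruence conditions: once $B$ is a Presburger box centered at $0$, every element of $mB$ has each coordinate $\equiv_{N_i}0$, while a general element of $H=G_0-a$ need not satisfy those congruences (the parallelogram $P_a$ typically carries a nontrivial congruence inherited from the cell decomposition of $G$, and $H$ is only a $\star$-group, not closed under ambient $+$). So $H\not\subseteq\mathcal{B}$ in general, and $\pi$ cannot be surjective onto $H$. This is not fatal to the theorem, but it means you must \emph{redefine} $G_0$ as the image $\pi(\mathcal{B})$ (equivalently $\bigcup_n f_n(nB)$, as the paper does), observe that $G_0\supseteq f(B)$ is generic and hence of finite index, and only then apply the first isomorphism theorem to get $G_0\cong\mathcal{B}/\Lambda$. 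As written, your proof establishes the theorem with the wrong candidate for $G_0$ and an unsupported surjectivity claim.
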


\begin{proof}
By definition, any cell is definably isomorphic to an
$n$-dimensional cell $C$ in $\M^n$. So we may assume that $G\subseteq
\M^k$ with $k=dim(G)$, because $G$ is a bounded group.

By Theorem \ref{ab-by-finite} there is a definable abelian
subgroup $G_{ab}$ of $G$ of finite index. We may of course assume
that $G=G_{ab}$ so we will assume $G$ is abelian. Because of this
and to clarify the notation in the rest of the proof, we will switch to additive notation and have  $(G,
\cdot,e)=(G, \oplus,e)$.

\bigskip

By Theorem \ref{supersupervecindad} we can find a definable
parallelogram $P_a\subseteq G$ centered in $a$ which is generic in
$G$ and such that $xa^{-1}y=x-a+y$ for all $x,y\in P_a$. We may of
course definably shift the operations and assume that $a$ is both
the identity in $G$ and the origin in $\M^k$.

Furthermore, any parallelogram centered in the origin of dimension
$k$ is definably isomorphic to a $k$-box in $\M^k$ centered in the
origin via a linear function, and such an isomorphism will preserve
addition.

So we have a definable local isomorphism $f$ from a box $B\subseteq
\M^k$ centered in the origin into a generic subset of $G$, with the
following properties:

\begin{itemize}
\item For any $n \in \mathbb{N}$ and $x_{1},\dots,x_{n} \in B$, if
$x_{1}+\dots+x_{n} \in B$ then
\[f(x_{1}+\dots+x_{n})= \bigoplus_{i=1}^{n}f(x_{i}).\]
In particular, for any $x \in B$ and $z \in \mathbb{Z}^{k} \cap B$,
if $x+z \in B$ then $f(x+z)= f(x)\oplus f(z)$.

\item  For every $x,y \in B$, we have $f(x) \oplus f(y)=f(y)
\oplus f(x)$. \end{itemize}

Now, we define $f_n: nB\rightarrow
G$ by
\[
f_n(b_1+b_2+\dots +b_n)= f(b_1)\oplus f(b_2)\oplus \dots \oplus
f(b_n).
\]

\begin{claim}\label{well defined}
Each function $f_n$ is well defined.
\end{claim}

\begin{proof}
Fix a natural number $n \in \mathbb{N}$.
Assume that $B= I_{1}\times \dots \times I_{k}$, where each
 $I_{i}=[ -\alpha_{i}, \alpha_{i}] \wedge x\equiv_{N_{i}}0$ and define $R_{i}= nN_{i}$.

We will need to work with the coordinates of the elements in
$\mathcal Z^k$. For this claim we will use $v(i)$ to denote the
$i$'th coordinate of $v$ for any $v\in \mathcal Z^k$.

 Take an element ${v} \in B$, for each $i \leq k$ we can find $w(i), z(i) \in I_{i}$
  such that $v(i)=w(i)+z(i)$, $z(i) \in \mathbb{Z}$ and $w(i) \equiv_{R_{i}} 0$.
  Thus, we can find vectors ${w} \in n B$, ${z} \in \mathbb{Z}^{k} \subseteq B$
  such that ${v}= {w}+{z}$ and each coordinate $w(i) \equiv_{R_{i}} 0$.

 Assume now that $x_{1}+\dots+x_{n}= y_{1}+\dots+y_{n}$, where  $x_{i},y_{i} \in B$ for each $i \leq n$.
 Decomposing vectors ${x}_{i}={w}_{i}+{z}_{i}$
 and ${y}_{i}={t}_{i}+{z'}_{i}$ with ${w_i} \in n B$, $z'_i, {z_i} \in \mathbb{Z}^{k} \subseteq B$
 and ${t}_{i}(j), w_i(j) \equiv_{R_{j}} 0$ for all $0\leq j\leq k$.

Replacing, we get
\[
\underbrace{(w_{1}+\dots+w_{n})}_{W}+\underbrace{
(z_{1}+\dots+z_{n}-z'_{1}-\dots- z'_{n})}_{ Z }=
 \underbrace{(t_{1}+\dots+t_{n})}_{T}.\]

So $Z \in \mathbb{Z}^{k}$, for each $i \leq k$, $Z(j)=T(j)-(j)
\equiv_{R_{j}}0$, and $\frac{W}{n}+\frac{Z}{n}= \frac{T}{n} \in B$. \\

\noindent \textit{Subclaim:}$ \displaystyle{\bigoplus_{i=1}^{n}
f(w_{i}) \oplus f(Z)= \bigoplus_{i=1}^{n} f(t_{i})  }$.
\begin{proof} For each $j \leq n$, $\displaystyle{w_{j}= \underbrace{
\frac{ w_{j}}{n}+\dots+ \frac{ w_{j}}{n}}_{n-\text{times}}}= n
\frac{w_{j}}{n}$ and $\displaystyle{\frac{w_{j}}{n} \in B}$.
Likewise, $\displaystyle{t_{j}=n \frac{t_{j}}{n}}$  and
$\displaystyle{Z= n \frac{Z}{n}}$, where
$\displaystyle{\frac{t_{j}}{n}, \frac{Z}{n} \in B}$. Thus,
\begin{align*}
\bigoplus_{i=1}^{n} f(w_{i}) \oplus f(Z) &=
\bigoplus_{i=1}^{n}f\left(n \frac{w_{i}}{n}\right)
\oplus f\left(n \frac{Z}{n}\right)\\
&= \bigoplus_{i=1}^{n} \underbrace{f \left(\frac{w_{i}}{n} \right)
\oplus \dots \oplus f \left(\frac{w_{i}}{n} \right) }_{
n-\text{times}} \oplus
 \underbrace{f \left(\frac{Z}{n}\right) \oplus \dots \oplus f \left(\frac{Z}{n}\right)}_{n- \text{times}}\\
 &= \underbrace{ \left(\bigoplus_{i=1}^{n} f\left( \frac{w_{i}}{n}\right) \oplus f\left( \frac{Z}{n}\right)\right) \oplus \dots
 \oplus \left(\bigoplus_{i=1}^{n} f\left(
 \frac{w_{i}}{n}\right) \oplus f\left( \frac{Z}{n}\right)\right)}_{n- \text{times}}
 \\
 &=  \underbrace{ f\left( \frac{w_{1}}{n}+ \dots +\frac{w_{n}}{n} + \frac{Z}{n}\right)\oplus \dots
 \oplus f\left( \frac{w_{1}}{n}+ \dots +\frac{w_{n}}{n} + \frac{Z}{n}\right)}_{n- \text{times}} \\
&=  \underbrace{  f\left( \sum_{i=1}^{n} \frac{t_{i}}{n} \right)
\oplus \dots
 \oplus  f\left( \sum_{i=1}^{n} \frac{t_{i}}{n} \right)}_{n- \text{times}} \\
 &=  \underbrace{  \left(\bigoplus_{i=1}^{n} f \left(
 \frac{t_{i}}{n}\right)\right)
\oplus \dots
 \oplus   \left(\bigoplus_{i=1}^{n} f \left( \frac{t_{i}}{n}\right)\right)}_{n- \text{times}} \\
&= \bigoplus_{i=1}^{n} \underbrace{f \left( \frac{t_{i}}{n}\right)  \oplus \dots \oplus f \left( \frac{t_{i}}{n}\right)}_{n- \text{times}} \\
 &=  \bigoplus_{i=1}^{n}f\left(n \frac{t_{i}}{n}\right) \\
 &= \bigoplus_{i=1}^{n}f(t_{i}).
\end{align*}
\end{proof}

Since $\displaystyle{Z= \sum_{i=1}^{n} z_{i}
-\sum_{i=1}^{n}z{'}_{n}}$, we have $ f(Z)=\displaystyle{
\bigoplus_{i=1}^{n} f(z_{i}) \ominus \bigoplus_{i=1}^{n}
f(z'_{i})}$. Hence
\begin{center}
$\displaystyle{\bigoplus_{i=1}^{n} f(w_{i}) \oplus
\bigoplus_{i=1}^{n}f(z_{i})= \bigoplus_{i=1}^{n} f(t_{i}) \oplus
\bigoplus_{i=1}^{n} f(z_{i}')}.$
\end{center}
Reorganizing this equation we obtain
$\displaystyle{\bigoplus_{i=1}^{n} f(w_{i}) \oplus f(z_{i})=
\bigoplus_{i=1}^{n} f(t_{i}) \oplus f(z_{i}')}$, and thus
$\displaystyle{\bigoplus_{i=1}^{n} f(w_{i} +z_{i})=
\bigoplus_{i=1}^{n} f(t_{i} +z_{i}')}$, concluding that $
\displaystyle{\bigoplus_{i=1}^{n} f(x_{i})=
\bigoplus_{i=1}^{n}f(y_{i})}$, as required.
\end{proof}

\noindent \begin{claim}
For some $n$ we have $f_n(nB)=f_l(lB)$ for all $l>n$.
\end{claim}

\begin{proof}
By genericity, $\displaystyle{G=\bigcup_{g\in X} g \oplus f(B)}$ for a finite set $X$.\\
Define $X'=\{g\in X \mid \exists m\in \mathbb N\ g\in f_m(mB)\}$.
Since $f_{m}(mB) \subseteq f_{l}(lB)$ whenever $m \leq l$ and $X'$
is finite, there is some minimal $m \in \mathbb{N}$ such that
 $X' \subseteq f_m(mB)$. It is clear from the construction that $m+1$ has the required properties.
\end{proof}
\noindent Let $\Lambda_n=\{b\in nB\mid f_n(b)=0_G\}$ and
$\displaystyle{\Lambda=\bigcup_{n \in \mathbb{N}} \Lambda_n}$, and
let $ \displaystyle{G_0=\bigcup_{n \in \mathbb{N}}f_n(B_n)}$. By
Claim \ref{well defined} $G_0$ is a definable subgroup of $G$.
Since $G_{0}$ contains $f_1(B)$
it is generic, and so it must have finite index.\\
Since $\lambda\cap B=\{ 0\}$, $\Lambda$ is a local $B$-lattice and $\mathcal B/\Lambda$ is isomorphic to $G_0$.
\end{proof}

\appendix

\section{Translation between parallelograms}\label{equivalent definitions}
\subsection{Equivalent definitions}
Many of the results in this paper were achieved by Eleftheriou and Starchenko
in the context of ordered divisible abelian groups. We made strong use of their
results in Section \ref{sub3.3}, in particular with Fact \ref{paralelogramo}, claiming
that their proof that any cell (in the context of $Th(\mathbb Q,+,<)$)
is a finite union of parallelograms can be applyed in the Presburger case.

This of course can only work if their definition of parallelogram has a direct translation
to the context of Presburger, which coincides with Definitions \ref{smaller parallelograms} and
\ref{open parallelograms}, or
at the very least that these definitions include the analogues of parallelograms defined in
\cite{Pantelis}.

We will now describe the definition of parallelograms from \cite{Pantelis},
how we can apply this definition to Presburger arithmetic, and why these analogues
are $l$-parallelograms.
In this appendix, we will use letters $a,b$ to denote tuples, $c,d$ for elements
in the ground model of $Th(\mathbb{Q},+,<)$, and greek letters for constants in $\mathbb Q$.

\begin{definition}\label{linear case}
Let $M$ be a model of $Th(\mathbb Q, +, <)$. Let $b_1,\dots, b_j$ be elements such that
$b_i=\langle \beta_{i_1} d_i, \beta_{i_2} d_i, \dots, \beta_{i_n} d_i\rangle$ where
$\beta_{i_j}\in \mathbb Q$
and $d_i\in M$. Let
$a=\langle \alpha c_1 , \alpha c_2, \dots, \alpha c_n\rangle=\alpha\langle c_1, \dots, c_n\rangle$ with
$\alpha\in \mathbb Q$ and $\langle c_1, \dots, c_n\rangle\in M^n$.

Then the $n$-parallelogram $P_a(b_1, \dots , b_j)$ anchored at $a$ and determined by $b_1, \dots , b_j$
is the set of
points in $M^n$ that can be written as
\[
a  +  \sum_{i=1}^j \langle \beta_{i_1} t_i, \beta_{i_2} t_i, \dots, \beta_{i_n} t_i\rangle
\]
for some $t_1, \dots, t_n$ in $M$ with $t_i<d_i$.
\end{definition}

The following is Lemma 3.6 in \cite{Pantelis}.

\begin{fact}\label{Pantelis}
The closure of every bounded $n$-dimensional linear cell $Y\subseteq M^n$ is a finite union
of $n$-parallelograms (as in Definition \ref{linear case}).
\end{fact}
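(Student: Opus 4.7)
The plan is to proceed by induction on the dimension $n$, mirroring the strategy of Eleftheriou and Starchenko in \cite{Pantelis}. The base case $n=1$ is immediate: a bounded $1$-dimensional linear cell in $M$ is an open interval with linear endpoints, say $(c, c+d)$, whose closure is realized as the $1$-parallelogram $P_c(\langle d\rangle)$ with $\alpha=1$.

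\bigskip

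For the inductive step I would first apply cell decomposition in $Th(\mathbb{Q},+,<)$ to put $Y$ in standard form: there is an $(n-1)$-dimensional bounded linear cell $Y' \subseteq M^{n-1}$ and $\mathbb{Q}$-linear functions $\alpha,\beta$ on $Y'$ with $Y = \{(y',t) : y' \in Y',\ \alpha(y') < t < \beta(y')\}$. By the inductive hypothesis, $\overline{Y'}$ is a finite union of $(n-1)$-parallelograms, so after splitting I may assume $\overline{Y'}$ is a single parallelogram $P = P_a(b_1,\dots,b_j)$. Using the natural parametrization $\varphi:(t_1,\dots,t_j) \mapsto a + \sum_i t_i \hat{b}_i$ with $t_i \in [0,d_i]$, the pullbacks $\alpha\circ\varphi$ and $\beta\circ\varphi$ become $\mathbb{Q}$-linear functions of $(t_1,\dots,t_j)$.

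\bigskip

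It then suffices to show that the region in the $(t_1,\dots,t_j,s)$-space defined by $0 \le t_i \le d_i$ together with $\alpha\circ\varphi(t) \le s \le \beta\circ\varphi(t)$ is a finite union of parallelograms, since the linear map $(t,s) \mapsto (\varphi(t), s)$ sends parallelograms to parallelograms and has image exactly $\overline{Y}\cap (P\times M)$. Applying the linear shear $s \mapsto s - \alpha\circ\varphi(t)$, which again preserves the parallelogram class, this reduces to the region $R := \{(t,s) : 0 \le t_i \le d_i,\ 0 \le s \le \gamma(t)\}$ for a non-negative $\mathbb{Q}$-linear function $\gamma$. A secondary induction on $j$, splitting $R$ along the hyperplanes where $\gamma$ attains relevant values (or where pairs of its defining coefficients change sign), then expresses $R$ as a finite union of parallelograms in the sense of Definition \ref{linear case}.

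\bigskip

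The main obstacle I expect is precisely this last step: writing the ``slanted'' region under the graph of a linear function over a zonotope as a finite union of zonotopes. This is the combinatorial heart of Lemma 3.6 in \cite{Pantelis}, and it requires exploiting the flexibility of allowing more direction vectors $b_i$ than the ambient dimension, so that the non-parallel boundary contributed by $\gamma$ can be absorbed into extra generators. Once this decomposition step is in hand, the inductive machinery above assembles the full result.
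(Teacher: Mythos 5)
There is no in-paper proof to compare against: Fact \ref{Pantelis} is quoted directly from \cite{Pantelis} (Lemma~3.6 there), and the present paper cites it without proof. Taking your sketch on its own terms, the reduction is sensibly organized: induct on dimension, decompose $\overline{Y'}$ into $(n-1)$-parallelograms, parametrize one such parallelogram by $\varphi$, pull back $\alpha,\beta$ to $\mathbb{Q}$-linear functions of the parameters, and shear so that the lower boundary becomes $s=0$. Those steps are routine linear-algebra bookkeeping, and the observation that $\mathbb{Q}$-linear maps preserve the class of Definition~\ref{linear case} parallelograms is correct.

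What remains --- writing $R=\{(t,s): 0\le t_i\le d_i,\ 0\le s\le\gamma(t)\}$ as a finite union of parallelograms in the sense of Definition~\ref{linear case} --- is the genuine content of Lemma~3.6, and you explicitly leave it open. This is not a detail to be filled in later: a parallelogram in that sense is a Minkowski sum of segments and therefore centrally symmetric, while $R$ is not when $\gamma$ is non-constant (already for $j=1$ and $\gamma(t)=t$, $R$ is a triangle). Consequently ``splitting along the hyperplanes where $\gamma$ attains relevant values'' cannot just mean cutting the $t$-box into sub-boxes; the decomposition must introduce generators not aligned with the coordinate axes so that the slanted top face is an actual face of each piece, and one must then argue that the recursion this sets up closes after finitely many steps. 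Until that mechanism is supplied, the proposal is a well-organized reduction to the hard part of the lemma, not a proof of it.
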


Cells in Presburger have a very similar definition as cells in $Th(\mathbb{Q},<,+)$, except for the
congruences. But congruences can be dealt with at any stage (they are the difference between
$l$-parallelograms and full $l$-parallelograms). We will therefore show what the analogue
of Definition \ref{linear case} would be when not taking congruences into account, and we will
prove we get full parallelograms.

\begin{definition}\label{PA} Let $Z$ a model of Presburger arithmetic.
Let $b_1,\dots, b_j$ be elements in $Z^n$ such that
$b_i=\langle \beta_{i_1} d_i, \beta_{i_2} d_i, \dots, \beta_{i_n} d_i\rangle$ where
$\beta_{i_j}\in \mathbb Q$
and $d_i\in Z$. Let
$a=\langle \alpha c_1 , \alpha c_2, \dots, \alpha c_n\rangle=\alpha\langle c_1, \dots, c_n\rangle$ with
$\alpha\in \mathbb Q$ and $\langle c_1, \dots, c_n\rangle\in Z^n$.

Given $\langle a, b_1, \dots, b_l\rangle$ as above, let the full
parallelogram $P_a(b_1, \dots , b_j)$ generated by $\langle a,
b_1, \dots, b_l\rangle$ be the subset of points in $Z^n$ that can
be written as
\[
a  +  \sum_i \langle \beta_{i_1} t_i, \beta_{i_2} t_i, \dots, \beta_{i_n} t_i\rangle
\]
for some $t_1, \dots, t_n$ in $Z$ with $t_i<d_i$.

We may always assume without loss of generality that
$\{\langle \beta_{i_1}, \beta_{i_2}, \dots, \beta_{i_n} \rangle\}_{1\leq i\leq j}$
are $\mathbb Q$-linearly independent.
\end{definition}

We will now show that the sets $P_a(b_1, \dots , b_l)$ as defined above are full $l$-parallelograms
as in Definition \ref{open parallelograms} and \ref{smaller parallelograms}. We will start
with the open case.

\begin{theorem}\label{equivalent open}
Let $a, b_1, \dots, b_n$ be as in Definition \ref{PA} with $j=n$.
Then $P_a(b_1, \dots , b_n)$ is an open full $n$-parallelogram.
\end{theorem}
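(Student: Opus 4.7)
The plan is to use the assumed $\mathbb{Q}$-linear independence of the coefficient vectors to invert a change of coordinates, turning the parametric description of $P_a(b_1,\dots,b_n)$ into an intersection of $n$ linear strips as required by Definition \ref{open parallelograms}.

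First I would set $v_i:=\langle \beta_{i_1},\dots,\beta_{i_n}\rangle\in\mathbb{Q}^n$ and let $B\in GL_n(\mathbb{Q})$ be the matrix with columns $v_1,\dots,v_n$; by hypothesis this matrix is invertible. Write the rows of $B^{-1}$ as $\mathbb{Q}$-linear functionals $\ell_1,\dots,\ell_n$. The key observation is that a point $x\in Z^n$ belongs to $P_a(b_1,\dots,b_n)$ exactly when there exist $t_1,\dots,t_n\in Z$ with $0\leq t_i\leq d_i$ and $x-a=\sum_i t_i v_i$, i.e.\ $x-a=Bt$; since the $v_i$ are a $\mathbb{Q}$-basis, the tuple $t=B^{-1}(x-a)$ is uniquely determined and $t_i=\ell_i(x-a)$. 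Therefore $x\in P_a(b_1,\dots,b_n)$ iff $\ell_i(a)\leq \ell_i(x)\leq \ell_i(a)+d_i$ for every $i$.

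Setting $f^i(x):=\ell_i(x)$, $\gamma_1^i:=\ell_i(a)$, $\gamma_2^i:=\ell_i(a)+d_i$, each such condition defines a linear strip in the sense of Definition \ref{linear strip}. Three checks remain. First, $f^i$ has the required form $\sum_j q_{ij}x_j$ with $q_{ij}\in\mathbb{Q}$ by construction. Second, $\gamma_2^i-\gamma_1^i=d_i$ is infinite, because $P_a(b_1,\dots,b_n)$ is assumed to be $n$-dimensional and the $v_i$ are linearly independent, so no $d_i$ can be finite without dropping the dimension. Third, since $a=\alpha\langle c_1,\dots,c_n\rangle$ with $c_j\in Z$ and $\alpha\in\mathbb{Q}$, each $\gamma_1^i=\ell_i(a)$ is a $\mathbb{Q}$-linear combination of elements of $Z$, and $\gamma_2^i=\gamma_1^i+d_i$ similarly; clearing denominators shows that each strip condition is indeed a Presburger formula of the shape used in Definition \ref{linear strip}. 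Concluding, $P_a(b_1,\dots,b_n)$ is the intersection of $n$ linear strips, hence an open full $n$-parallelogram.

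The only real subtlety is bookkeeping: matching the implicit form of linear strips used here with the explicit parametric form of \cite{Pantelis}, and convincing oneself that the bounds $\gamma_j^i$ belong to the admissible class of constants (elements of the form $\tfrac{l}{m}d$ with $d\in Z$, possibly combined linearly over $\mathbb{Q}$). Once this translation is set up, the argument is a one-line change of basis, and the same translation will extend to the non-open case of Definition \ref{smaller parallelograms} by projecting along the linear maps $f_i$ and restoring the congruence conditions afterward.
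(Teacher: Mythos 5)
Your proof is essentially the same as the paper's: both hinge on extracting the dual basis of linear functionals for $v_1,\dots,v_n$ and rewriting the parametric description as the intersection of $n$ strips. The paper phrases it via the hyperplane equations $H_j$ vanishing on $\operatorname{span}\{v_i : i\neq j\}$, while you take the rows of $B^{-1}$; these are the same functionals up to a nonzero rational scaling, so the strip constraints you obtain ($\ell_i(a)\leq \ell_i(x)\leq \ell_i(a)+d_i$) match the paper's ($H_j(a)\leq H_j(x)\leq H_j(a+b_j)$) and the argument is, as you say, a one-line change of basis.
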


\begin{proof}
Fix any $j\leq n$. Working within $\mathbb Q$, let $\mathcal P$ be the subspace of $\mathbb Q^n$
generated by the $n-1$-vectors $\langle \beta_{i_1}, \beta_{i_2}, \dots, \beta_{i_n} \rangle$
with $i\neq j$, and assume such a subspace is defined by the equation $H_j( x)=0$. This is an
equation with coefficients in $\mathbb Q$, and it follows that any point in $Z^n$ of the form
\[
\sum_{i\neq j} \langle \beta_{i_1}, \beta_{i_2}, \dots, \beta_{i_n} \rangle t_i
 \]
will satisfy $H_j(\bar x)=0$.

So any point of the form
\[
a+\sum_{i\neq j} \langle \beta_{i_1}, \beta_{i_2}, \dots, \beta_{i_n} \rangle t_i
\]
will satisfy $H_j(\bar x)=H_j(a)$ and
any point of the form
\[
a+b_j +\sum_{i\neq j} \langle \beta_{i_1}, \beta_{i_2}, \dots, \beta_{i_n} \rangle t_i
\]
will satisfy $H_j(\bar x)=H_j(a+b_j)$. Assuming that $H_j(a)< H_j(a+b_j)$ (otherwise we reverse
the order), we
have that the set

\[
\{\bar x\in Z^n \mid H_j(a)\leq H_j(\bar x)\leq H_j(a+b_j) \}
\]
 is precisely the set
\[
a  +  \sum_i \langle \beta_{i_1} t_i, \beta_{i_2} t_i, \dots ,\beta_{i_n} t_i\rangle
\]
where $t_1, \dots, t_n$ vary in $Z$ for $i<j$ and $0\leq t_j\leq d_j$.

So
\[
P_a(b_1, \dots, b_n)=\bigcap_{j=1}^n \{\bar x\in Z^n \mid H_j(a)\leq H_j(\bar x)\leq H_j(a+b_j) \}
 \]
as required.
\end{proof}

\begin{theorem}
Let $a, b_1, \dots , b_l$ be elements in $Z^n$ satisfying the conditions described
in Definition \ref{PA}.

Then $P_a(b_1, \dots , b_l)$ is an open full $l$-parallelogram in $Z^n$ as in Definition
\ref{smaller parallelograms}.
\end{theorem}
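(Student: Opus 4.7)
The plan is to realise $P_a(b_1, \ldots, b_l)$ directly as the image of an explicit open full $l$-parallelogram under a linear map, thereby matching Definition \ref{smaller parallelograms} by construction. The previous theorem already handled the equidimensional case $l = n$ by cutting out the parallelogram with $n$ hyperplane equations; for general $l \leq n$ this is no longer possible, because the set has dimension $l$ while it sits in $Z^n$, so it must instead be exhibited as such an image.

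First I would let
\[
Q := \{(t_1, \ldots, t_l) \in Z^l \mid 0 \leq t_i \leq d_i,\ 1 \leq i \leq l\},
\]
which is the intersection of the $l$ linear strips cut out by $f^i(t) := t_i$ with $\gamma_1^i = 0$ and $\gamma_2^i = d_i$. By Definition \ref{open parallelograms}, $Q$ is an open full $l$-parallelogram in $Z^l$. Next I would define the affine map $L \colon Z^l \to Z^n$ componentwise by
\[
L_j(t_1, \ldots, t_l) := \alpha c_j + \sum_{i=1}^{l} \beta_{i_j}\, t_i \qquad (1 \leq j \leq n),
\]
using the data of Definition \ref{PA}. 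Each $L_j$ is a Presburger linear function (rational coefficients and constant term in $dcl(a, b_1, \ldots, b_l)$). Reading off Definition \ref{PA}, the image $L(Q)$ is precisely the set of points of $Z^n$ of the form $a + \sum_i \langle \beta_{i_1} t_i, \ldots, \beta_{i_n} t_i \rangle$ with $0 \leq t_i \leq d_i$, i.e.\ $P_a(b_1, \ldots, b_l)$ itself. This exhibits our set as the image of the open full $l$-parallelogram $Q$ under the tuple of linear maps $(L_1, \ldots, L_n)$, so by Definition \ref{smaller parallelograms} it is a full $l$-parallelogram in $Z^n$.

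The only subtlety I expect is verifying that $L$ genuinely lands in $Z^n$ when the rational coefficients $\beta_{i_j}$ have non-trivial denominators: the definition of a $B$-linear function requires congruence conditions on the domain so that the rational combinations evaluate in $Z$. But this restriction is precisely the implicit requirement already present in Definition \ref{PA}, which quantifies only over those points of the form $a + \sum_i \langle \beta_{i_1} t_i, \ldots, \beta_{i_n} t_i\rangle$ that lie in $Z^n$. Under this matching of domains the identification $L(Q) = P_a(b_1, \ldots, b_l)$ is immediate, and the theorem follows without further work. The $\mathbb{Q}$-linear independence assumption on the vectors $\langle \beta_{i_1}, \ldots, \beta_{i_n}\rangle$ plays no role in this argument; it would only be needed if one wanted the image to be genuinely $l$-dimensional, which Definition \ref{smaller parallelograms} does not require.
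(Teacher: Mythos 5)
Your proof is correct, and it takes a genuinely different and cleaner route than the paper's. The paper works in the ambient space $Z^n$: it identifies the $\mathbb{Q}$-subspace spanned by the direction vectors $\langle \beta_{i_1}, \dots, \beta_{i_n}\rangle$, picks $l$ free coordinates, observes that the coordinate projections $b_1', \dots, b_l'$ of the $b_i$ determine a full-dimensional parallelogram $P_0(b_1', \dots, b_l')$ in $Z^l$, invokes Theorem \ref{equivalent open} to see that this projected set is an open full $l$-parallelogram (intersection of strips), and finally realizes $P_a$ as $a + \bar f\bigl(P_0(b_1', \dots, b_l')\bigr)$, where $\bar f$ parameterizes the subspace. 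You instead pass to the parameter space: the coefficient box $Q = \prod_{i=1}^l [0,d_i] \subseteq Z^l$ is \emph{trivially} an open full $l$-parallelogram (it is an intersection of coordinate strips $0 \leq t_i \leq d_i$), and the affine Presburger-linear map $L(t) = a + \sum_i \beta_i t_i$ sends $Q$ onto $P_a(b_1,\dots,b_l)$, matching Definition \ref{smaller parallelograms} on the nose. Your version avoids both the projection argument and the appeal to Theorem \ref{equivalent open}, and, as you note, the $\mathbb{Q}$-linear independence assumption becomes superfluous (whereas the paper needs it to select the $l$ free coordinates and for the projected $b_i'$ to span $Z^l$). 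The one shared implicit assumption in both arguments is that the $d_i$ are infinite, which Definition \ref{linear strip} requires of a strip but Definition \ref{PA} does not state; your treatment of the congruence/domain issue for the rational-coefficient map $L$ is otherwise exactly the point that needs making, and it is the same subtlety that the paper's parameterization $\bar f$ faces.
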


\begin{proof}
In this case, let $H(\bar x)=0$ be the $l$-dimensional subspace of $\mathbb Q^n$ generated
by the $\mathbb Q$-vectors
\[
\{\langle \beta_{i_1}, \beta_{i_2}, \dots, \beta_{i_n} \rangle\}_{1\leq i\leq l}.
 \]

By linear algebra, there are $l$ free coordinates and $n-l$ dependent ones in $H(\bar x)=0$. Without
loss of generality, we will assume that the independent ones are the first $l$, so that
\[
\left\{\bar x \in \mathbb Q \mid H\left(\bar x\right)=0\right\}=
\left\{(x_1,\dots, x_j, f_{j+1}\left(x_1, \dots x_j\right),f_{j+2}\left(x_1, \dots x_j\right),
\dots, f_{n}\left(x_1, \dots x_j\right) \text{ with $x_1,\dots, x_j\in \mathbb Q^j$}\right\}
 \]
where $f_{j+1}\left(x_1, \dots x_j\right),f_{j+2}\left(x_1, \dots x_j\right),
\dots, f_{n}\left(x_1, \dots x_j\right)$ are $\mathbb Q$-linear functions. Let
\[\bar f(x_1, \dots , x_n):=\left(x_1,\dots, x_j, f_{j+1}\left(x_1, \dots x_j\right),f_{j+2}\left(x_1, \dots x_j\right),
\dots, f_{n}\left(x_1, \dots x_j\right)\right).\]

Now, the projections $b_1', b_j'$ of
$b_1, \dots b_j$ to
the first $j$ coordinates determine an open full parallelogram
$P_0(b_1',\dots, b_j')$. Then
\[
\bar f\left(P_0\left(b_1',\dots, b_j'\right)\right)=P_0(b_1, \dots, b_n)
\]
and
\[
a+\bar f\left(P_0\left(b_1',\dots, b_j'\right)\right)=P_a (b_1, \dots, b_n).
\]

By Theorem \ref{equivalent open} and the definition of $l$-parallelograms in $Z^n$, the
theorem follows.
\end{proof}
%\subsection{Figures}

%\includegraphics[scale=0.3]{Figura1}
%\includegraphics[scale=0.3]{Figura2}

%\paragraph{Paragraph headings} Use paragraph headings as needed.
%\begin{equation}
%a^2+b^2=c^2
%\end{equation}

% For one-column wide figures use

\section*{Acknowledgments}
We would like to thank the anonymous referee for pointing out mistakes in earlier versions of the proof.
We would also like to thank Pantelis Eleftheriou for some very insightful comments which allowed us to improve this paper.

% BibTeX users please use one of
%\bibliographystyle{spbasic}      % basic style, author-year citations
%\bibliographystyle{spmpsci}      % mathematics and physical sciences
%\bibliographystyle{spphys}       % APS-like style for physics
%\bibliography{}   % name your BibTeX data base

\bibliographystyle{abbrv}
\bibliography{GroupsinPresburger}

\end{document}